\newtheorem{theorem}{Theorem}[section]
\newtheorem{lemma}[theorem]{Lemma}
\theoremstyle{definition}
\newtheorem{definition}[theorem]{Definition}
\newtheorem{remark}{Remark}
\DeclareMathOperator\diam{diam}
\DeclareMathOperator\sign{sign}
\DeclareMathOperator\Ker{Ker}
\DeclareMathOperator\hess{Hess}
\def\N{\mathbb{N}}
\def\R{\mathbb{R}}
\def\T{\mathbb{T}}
\def\S{\mathbb{S}}
\let\O=\Omega
\let\ve=\varepsilon
\let\t=\tilde
\let\mc=\mathcal
\let\O=\Omega
\let\t=\tilde
\let\mc=\mathcal
\def\1{\mathbbm{1}}
\def\t f{\tilde{f}}
\def\d{\,\mathrm{d}}
\def\ve{\varepsilon}
\newcommand\blfootnote[1]{%
	\begingroup
	\renewcommand\thefootnote{}\footnote{#1}%
	\addtocounter{footnote}{-1}%
	\endgroup
}
\newenvironment{formula}[1]{\begin{equation}\label{#1}}
	{\end{equation}\noindent}
\def\Fi#1{\begin{formula}{#1}}
	\def\Ff{\end{formula}\noindent}
\definecolor{lpink}{rgb}{0.96, 0.76, 0.76}
\definecolor{dpink}{rgb}{0.97, 0.51, 0.47}
\definecolor{sky}{rgb}{0.53, 0.81, 0.92}
\definecolor{salmon}{rgb}{1.0, 0.55, 0.41}
\definecolor{orman}{rgb}{0.24, 0.7, 0.44}
\definecolor{aciksari}{rgb}{0.91, 0.84, 0.42}
\definecolor{dgrey}{rgb}{0.52, 0.52, 0.51}
\definecolor{ao}{rgb}{0.0, 0.5, 0.0}
\def\R{\mathbb{R}}
\def\d{\,\mathrm{d}}
\def\O{\Omega}
\def\p{\,\partial}
\renewcommand{\thefootnote}{\roman{footnote}}
\author{Havva Yolda\c{s}\footnote{Delft Institute of Applied Mathematics, Faculty of Electrical Engineering, Mathematics and Computer Science, Delft University of Technology, Mekelweg 4, 2628CD Delft, Netherlands. h.yoldas@tudelft.nl}}
\title{On quantitative hypocoercivity estimates based on Harris-type theorems}
\begin{document}
	
	\maketitle
	
	\begin{abstract}
		\noindent 
		
		This review concerns recent results on the quantitative study of convergence towards the stationary state for spatially inhomogeneous kinetic equations. We focus on analytical results obtained by means of certain probabilistic techniques from the ergodic theory of Markov processes. These techniques are sometimes referred to as Harris-type theorems. They provide constructive proofs for convergence results in the $L^1$ (or total variation) setting for a large class of initial data. The convergence rates can be made explicit (both for geometric and sub-geometric rates) by tracking the constants appearing in the hypotheses. Harris-type theorems are particularly well-adapted for equations exhibiting non-explicit and non-equilibrium steady states since they do not require prior information on the existence of stationary states. This allows for significant improvements of some already-existing results by relaxing assumptions and providing explicit convergence rates. We aim to present Harris-type theorems, providing a guideline on how to apply these techniques to the kinetic equations at hand. We discuss recent quantitative results obtained for kinetic equations in gas theory and mathematical biology, giving some perspectives on potential extensions to nonlinear equations.

		\blfootnote{\emph{Keywords and phrases.} hypocoercivity, kinetic equations, ergodicity of Markov processes, continuous-time Markov processes, Harris's theorem, Doeblin's theorem.}
		\blfootnote{\emph{2020 Mathematics Subject Classification.} 35B40, 35Q20, 35Q92, 37A25, 60J25.}

	\end{abstract}
	
		\tableofcontents
	
	\section{Introduction}
	\label{sec:introduction}
	
	We are interested in the long-time behaviour of kinetic equations that can be written as
	\begin{equation}\label{eq:kinetic}
		\begin{alignedat}{2}
			\partial_t f(t,z)+ \mathcal T [f] (t,z)&= \mathcal C [f](t,z), \quad &&z \in \O\times \mc V, \, t >0, \\
			f(0,z) &= f_0 (z),  \quad  \quad  &&z \in \O\times \mc V,
		\end{alignedat}
	\end{equation} where $f(t,z) \geq 0$ is the probability density function of particles (or, depending on the context, gas molecules, bacteria cells, neurons etc.) at time $t \geq 0$ in the phase space, i.e. $z:=(x,v) \in \O \times \mc V$. In general, $x$ and $v$ represent the position and the velocity variables respectively\footnote{Except for the FiztHugh-Nagumo equation (see Section \ref{sec:FitzHugh-Nagumo}).}. 
	
	The operator $\mc T$ stands for the \emph{transport} part and it is either $\mc T [f] = v \cdot \nabla_x f$ describing the free transport or it takes the form $\mc T [f] = v \cdot \nabla_x f - \nabla_x \Phi(x) \cdot \nabla_v f$\footnote{$\nabla_x \cdot$ and $\nabla_v \cdot$ are the divergence operators in space and velocity variables respectively.} in the presence of a confining potential $\Phi$. In the latter case, an external force is exerted on the particles via the operator $-\nabla_x \Phi (x) \cdot \nabla_v$. 
	
	The operator $\mc C$ describes the \emph{collisions} (or interactions) between particles (or in some cases tumbling or firing process) and acts only on the velocity variable $v$. The definition of $\mc C$ specifies the kinetic equation we are interested in.

	The study of the long-time behaviour of kinetic equations involves proving that the solutions of \eqref{eq:kinetic} converge towards global equilibrium state as $t \to + \infty$ and estimating the rate of convergence. Both of these are well-known and important problems in kinetic theory (and in general for partial differential equations (PDEs)). The typical dynamics governed by \eqref{eq:kinetic} is that the transport part drives the process away from local equilibria unless the system is already in global equilibrium. It is conservative and it takes place only in the space variable $x$. On the other hand, the dissipation happens only on the velocity variable $v$ via the operator $\mc C$ and it drives the process towards local equilibrium. Then the transport term \emph{mixes} the dissipation into the space $x$ and leads to convergence to the global equilibrium. One needs to find a way to quantify this mixing effect to study the long-time behaviour of the equation. The theory of \emph{hypocoercivity} was developed specifically to quantify the effect of the transport (non-dissipative) part on the dissipative part (see \cite{V09, HN04,H06}) for kinetic equations.

	More precisely, hypocoercivity consists in finding a positive constant $C$ and a positive function $\beta(t)$ with $\beta(t) \to 0$ as $t \to +\infty$ such that
	\begin{align} \label{eq:hypocoercivity1}
		\|f (t,z) - f_\infty(z)\|_* \leq C \beta(t)\|f(0,z)-f_\infty(z)\|_*,
	\end{align} where $f$ solves \eqref{eq:kinetic} at time $t$ and $f_\infty$ is the stationary solution of \eqref{eq:kinetic} (it is a time independent solution of \eqref{eq:kinetic}, i.e. it satisfies that $0 =\p_t f_\infty = (\mc C - \mc T)[f_\infty]$). If $\beta(t) =  e^{-\lambda t} $ for some $\lambda >0$, then the convergence rate is exponential (or geometric). In this case, if one can show that such an estimate holds for \eqref{eq:kinetic}, then \eqref{eq:kinetic} is said to be \emph{hypocoercive} in the distance $\| \cdot \|_*$. Moreover depending on the form of $\beta(t)$ the convergence might be slower than exponential (sub-exponential or sub-geometric), e.g. in the case where $\beta(t)$ is an inverse power of a polynomial function.
	
	In this review, we consider kinetic equations whose dynamics are driven by continuous-time Markov processes and we are interested in showing an inequality like \eqref{eq:hypocoercivity1} for these equations. Moreover we are interested in  quantifying $C$ and $\beta(t)$ in \eqref{eq:hypocoercivity1}. In probability theory, the study of the long-time behaviour of Markov processes is referred to as the \emph{ergodic theory of Markov processes}. There are two classical approaches to obtaining quantitative ergodicity estimates for continuous-time Markov processes (see \cite{BCG08}):
	\begin{enumerate}[label=(\roman*)]
		\item {\bf Poincaré-type inequalities} are based on obtaining integral bounds on the generator of the process.
		\item {\bf Harris-type theorems} are based on finding an appropriate Lyapunov function which satisfies an inequality on the generator of the process, and the inequality is valid in a \emph{small set}.
	\end{enumerate}

	In this review, our focus is on the results using the second approach. Harris-type theorems originated in 1940 (cf. \cite{D40}) in the study of irreducibility of Markov chains and they are commonly used among probabilists since then. In the last couple of decades, these ideas have been revisited. There are several works providing quantitative versions of these theorems and alternative proofs using only PDE techniques e.g. semigroup theory, rather than probabilistic arguments (see e.g. \cite{MT94, HM11, CM21}). These alternative proofs provided a concise relation between the ergodic theory of Markov processes and the spectral properties of operators defining these PDEs. In turn, these results became a subject of many recent works in the context of PDEs. Particularly they are used in the study of the asymptotic behaviour of some models in population dynamics (e.g., \cite{G18, BCGM19,CY19, BCG20, CGY21, Y19}), kinetic theory (e.g. \cite{B20, CCEY20, C20, C21, E18}) and extended to non-conservative cases, e.g. systems where the mass is not conserved (e.g. \cite{BCGM19, BCG20, CG20}).

	\paragraph{Motivation and aim.} This review aims at collecting recent results which provide quantitative hypocoercivity estimates for some well-known kinetic equations in physics and biology using Harris-type arguments. These techniques have been used efficiently  to show the existence of a stationary state and to obtain quantifiable convergence rates to the stationary state. Harris-type theorems have certain advantages over classical hypocoercivity techniques in some cases. We discuss some of them below.
	
	\begin{itemize}
		\item {\bf Non-explicit and non-equilibrium steady states.} The classical hypocoercivity results often involve a step where one should prove a Poincaré-type inequality in weighted $L^2/H^1$ norms where the weight function is the inverse of the invariant measure of the process, i.e. the stationary solution of the equation. There is a large class of kinetic equations describing the gas dynamics for which the equilibrium state is given by a Gibbs-like distribution, i.e. it can be written explicitly. However, kinetic equations arising in mathematical biology (or in applied sciences in general) typically have complex and non-explicit steady states. This creates some difficulties when trying to prove a Poincaré inequality. In \cite{DMS15}, the authors provided a general methodology to obtain hypocoercivity results for linear kinetic equations preserving mass. The condition given in \cite{DMS15} for $f_\infty$ to be a steady state of a kinetic equation \eqref{eq:kinetic} is that $f_\infty$ should belong to the kernels of both the transport and the collision operators, i.e. $f_\infty \in \{ \Ker (\mc T)\cap \Ker (\mc C)\}$. In Section \ref{sec:runtumble}, we look at the run and tumble equation which describes the movement of bacteria under the effect of a chemical stimulus. It is a mass-preserving, linear kinetic equation. However the stationary state of the run and tumble equation does not satisfy the condition given in \cite{DMS15}, thus it cannot be treated with the methods proposed there. Harris-type theorems do not require any information about the stationary state of the equation to be applied. They are well-adapted for situations where the stationary solutions can be complex and non-explicit. They also provide the existence of a stationary solution and the convergence towards it simultaneously. Moreover, there are many physical systems in which the external effects, such as boundary conditions, drive the system from equilibrium to a non-equilibrium state. The latter is not a Gibbs-like state and is often not explicit. In another example we consider in Section \ref{sec:nonlin_BGK}, Harris-type theorems are used efficiently to show the existence of a stationary state for a nonlinear BGK equation which exhibits non-equilibrium stationary states. 
		
		\item {\bf Initial data and functional space.} Concerning the class of initial data for a kinetic equation, the natural assumption to make in terms of physical relevance is that it should be a finite measure. However, most of the time more restrictive assumptions are made if we would like to use classical hypocoercivity techniques since the estimates we obtain are in the $L^2/H^1$ setting. For example, to study the linear kinetic Fokker-Planck equation, a usual assumption to make is $\int f^2/f_\infty < +\infty$. If we write the equation in terms of the variable $h:=f/f_\infty$ we can benefit from the fact that the collision term is self-adjoint in the space $L_{f_\infty}^2$\footnotemark. 	\footnotetext{$L_{f_\infty}^2$ stands for the weighted $L^2$ space with a weight $f_\infty$.}This assumption restricts us to obtain convergence to equilibrium in $L^1$ spaces (see e.g. \cite{V02} for a more detailed discussion and how to obtain convergence estimates in the $L^1$ setting for the linear kinetic Fokker-Planck equation with the classical techniques). On the other hand, Harris-type theorems are very suitable for obtaining convergence estimates in the $L^1$ (or total variation) setting and they do not require any type of assumption or restriction on the initial data. They are valid for a wider class of initial data, including Dirac measures with bad local regularities and initial data with slowly decaying tails. Moreover we also mention that there are other techniques coming from analysis (see e.g.\cite{GMM17, MK14, BS13, BS13_2} and references therein) and from probability (see e.g.  for \emph{coupling techniques} \cite{FGM12,FGM16, DGM21, BF22}) providing estimates in the $L^1$ setting. 
		
		\item {\bf Constructive proofs and quantifiable convergence rates.}
		If we are interested in the qualitative properties of the solutions especially to compare if the analytical results are physically relevant at least in a certain range of parameters, we look for explicit bounds on the convergence rates. In this regard, Harris-type theorems have an advantage over non-quantitative techniques like the Kre\u{\i}n-Rutman theorem. It is straightforward to obtain the convergence rates quantitatively by tracking the constants appearing in the hypotheses of Harris-type theorems. The proofs are all constructive. However, in some cases, it involves computing complicated integrals. There are also works using Harris-type arguments in a non-quantitative manner for kinetic equations, e.g. \cite{BL55, CELMM18}. In some cases, it is possible to obtain quantitative convergence rates with other hypocoercivity techniques as well but it is not forthright as in the case of Harris-type theorems.
		
		\item {\bf Kinetic models arising in applied sciences.} Even though the kinetic equations obtained as a mean-field limit of biological processes share a common form with the kinetic models in gas theory, they differ drastically from them in terms of confinement mechanism and the nature of their steady states. We already mentioned that these models exhibit non-explicit and sometimes non-equilibrium steady states. As for the confinement mechanism, the confinement in these models is not induced through an external force field. It is a consequence of the internal dynamics, e.g. bias in the tumbling direction towards the chemoattractant for the run and tumble equation and the internally generated noise as input for membrane potential for the FitzHugh-Nagumo equation (see Sections \ref{sec:runtumble} and \ref{sec:FitzHugh-Nagumo} for more details). Due to these differences, sometimes the classical hypocoercivity approaches which are initially developed for treating the kinetic models in gas dynamics, are not applicable for the kinetic models in applied sciences or these approaches provide results under restrictive assumptions. However, Harris-type theorems are well suited to treat these models. 
		
	\end{itemize}
	
	\paragraph{Lyapunov vs. Poincaré.} Finally, we would like to mention the work \cite{BCG08} where the authors provided a link between the aforementioned two approaches: Poincaré-type inequalities and Harris-type theorems. The existence of a Lyapunov function is a sufficient (also necessary for exponential convergence) condition to be able to use Harris-type theorems. On the other hand, a general condition for obtaining Poincaré-type inequalities is not known. The significance of  \cite{BCG08} comes from the fact that the authors formed a bridge between these two approaches through some new inequalities called \emph{Lyapunov–Poincaré inequalities}. They also improved some existing results by using those. 
	
	\paragraph{Organisation of the paper.}
	
	In Section \ref{sec:Harris}, we introduce some common concepts in measure theory, probability and introduce some notations. This is followed by the statements of Harris-type theorems. We skip their proofs (which can be found in various references we list there) and rather focus on the intuitions behind the hypotheses and the results by discussing how to verify them for a given PDE. In Section \ref{sec:hypocoercivity}, we show how these theorems are applied to various kinetic models to obtain quantitative estimates. Section \ref{sec:hypocoercivity} is divided into two parts: 
	\begin{itemize}
		\item Section \ref{sec:MathPhys} is dedicated to some kinetic equations coming from the kinetic theory of gases. After an introduction of the Boltzmann equation, which is one of the fundamental equations in kinetic theory, we present quantitative results on the linear BGK equation (Sec. \ref{sec:linearBGK}), the collisionless (Knudsen) gas with boundary conditions (Sec. \ref{sec:Knudsen}), the nonlinear BGK equation on an interval (Sec. \ref{sec:nonlin_BGK}),  the linear Boltzmann equation (Sec. \ref{sec:lin_Boltzmann}), the linear degenerate Boltzmann equation (Sec. \ref{sec:deg_Boltzmann}) and the linear kinetic Fokker-Planck equation (Sec. \ref{sec:FP}). 
		\item Section \ref{sec:MathBio} is dedicated to two examples of kinetic models arising in biological processes: the run and tumble equation for bacterial chemotaxis (Sec. \ref{sec:runtumble}) and the FitzHugh-Nagumo equation for interacting neuron cells (Sec. \ref{sec:FitzHugh-Nagumo}).
	\end{itemize} In each of these sections, we introduce the corresponding equation, mention some literature and present the results on the equation in the form of a theorem and give a strategy for their proofs skipping the technicalities. We remark that our focus is on the quantitative results using Harris-type arguments.
	The last section (Section \ref{sec:Discussion}) is dedicated to a final discussion and some perspectives.
	
	\section{Harris-type theorems}
	\label{sec:Harris}
	
	As far as this review is concerned, by \emph{Harris-type theorems}\footnote{We use the same terminology as in \cite{CM21}.}, we refer to some probabilistic results on the study of ergodic (long-time) behaviour of Markov processes. A way of describing a Markov process is through its transition probabilities which generate a semigroup of linear operators on appropriate function spaces. Studying the spectral properties of this semigroup is closely related to the ergodicity of the Markov process and it is an important topic of research for both partial differential equations and probability.
	
	The original ideas of the Harris-type theorems date back to the paper of Doeblin \cite{D40} where he showed that once its transition probabilities have a uniform lower bound, a Markov chain has a mixing property meaning that it will admit an invariant (or a stationary) measure. A detailed explanation of these ideas can be found in Chapter 2 of the book \cite{S14}. In \cite{H56}, Harris studied the conditions for the existence of a unique invariant measure for a Markov process. Later Meyn and Tweedie \cite{MT93III, MT94} extended these results by showing exponential (or geometric) convergence rates to a unique stationary measure for some Markov processes. In \cite{DFG09, FR05}, the authors adapted the framework of Meyn and Tweedie to show how to obtain sub-exponential (or subgeometric) convergence rates. In \cite{HM11}, Hairer and Mattingly gave an alternative proof of this result in appropriate mass transport distances (yielding the total variation setting) including the cases with sub-exponential convergence rates (see also \cite{BCG08}). Following the paper of Hairer and Mattingly, Harris-type theorems gained a lot of interest outside the probability community, in particular, they have been successfully used for determining the spectral gaps of integro-differential operators which describe Piecewise Deterministic Markov Processes (PDMPs). In this review, we particularly focus on some kinetic equations motivated by physical or biological processes (see Sections \ref{sec:MathPhys} and \ref{sec:MathBio} respectively). The significance of \cite{HM11} comes from the fact that the authors provided an efficient way of obtaining quantitative convergence rates to a unique equilibrium state once the hypotheses are verified quantitatively. Moreover, recently in \cite{CM21}, Cañizo and Mischler provided yet another proof relying only on semigroup and interpolation arguments essentially using similar ideas. 
	
	\, 
	
	Harris-type theorems provide geometric (i.e. exponential) or sub-geometric (e.g. polynomial) convergence rates to a unique equilibrium state for a Markov process relying on two hypotheses:
	
	\begin{enumerate}[label=(\roman*)]
		\item { \bf Strong-positivity or minorisation condition} refers to the \emph{irreducibility} or the \emph{uniform mixing} property of a Markov process. It consists in verifying that the probability of a Markov process transitioning from an arbitrary initial state to any other state in its state space is positive. If such a property holds uniformly on the whole state space then \emph{Doeblin's theorem} gives the existence of a unique stationary equilibrium state and exponential relaxation towards it. When this condition fails to hold uniformly but it holds in a subset of the state space, it is still possible to show the existence of a unique equilibrium state under the condition that the subset where a  mixing property holds is visited infinitely often. 
		
		\item { \bf Foster-Lyapunov condition} refers to a \emph{geometric drift} or a \emph{confinement} property of a Markov process. This condition ensures that the Markov process converges to the centre of the state space where a minorisation condition holds and it is verified by finding an appropriate Lyapunov function satisfying an inequality on the generator of the process. The speed of convergence,  i.e. whether it is geometric or sub-geometric, is determined depending on how strong this inequality is. 
	\end{enumerate}
	
	Before giving the statements of the theorems we introduce some notations.
	
	\paragraph{Notations} We are interested in the long-time behaviour of some continuous-time Markov processes taking values in locally compact separable metric spaces, e.g. Polish spaces. We consider a measurable space $(\Omega ,\Sigma )$ where $\Omega$ is a Polish space endowed with a probability measure, i.e. a measure of mass $1$. We denote $\mc M (\O)$ and $\mc P (\O)$ as the spaces of finite measures and probability measures on $\O$ respectively. 
	\begin{definition}[Total variation distance]
		The \emph{total variation distance} between two probability measures $\mu, \nu \in \mc P (\O)$ is defined as 
		\begin{align}
			\label{def:TVnorm}
			\|\mu (E) - \nu(E)\|_{TV} := \sup_{E \in \Sigma} |\mu (E) - \nu(E)|,
		\end{align} for every Borel set $E \in \Sigma$. 
	\end{definition}
	
	\begin{definition}[Absolutely continuous measures]
		A measure $\mu \in \mc M (\O)$ is \emph{absolutely continuous} if there exists a Lebesgue integrable function $f$ such that 
		\begin{align*}
			\mu (E) = \int_E f (\mathrm{d} z),
		\end{align*} for every Borel set $E \in  \Sigma$. Then $f$ is called the \emph{Radon-Nikodym derivative }$\d \mu / \d z$. 
	\end{definition}
	Next we give an elementary result establishing a connection between the total variation distance and the $L^1$ distance. Later we consider that these two distances are essentially the same. 
	\begin{lemma} \label{lem:TVtoL1}
		Let  $\mu, \nu \in \mc P (\O)$ and $f, g$ be the Radon-Nikodym derivatives of $\mu$ and $\nu$ respectively. Then 
		\begin{align*}
			\|\mu (E) - \nu(E)\|_{TV} = \frac{1}{2}\|f - g\|_{L^1}. \footnotemark
		\end{align*}
	\end{lemma}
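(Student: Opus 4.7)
The plan is to use the Hahn--Jordan-type decomposition of the signed measure $\mu-\nu$ induced by its Radon--Nikodym density $h:=f-g$, and show that the supremum in \eqref{def:TVnorm} is attained on the positivity set of $h$, yielding exactly half the $L^1$ norm.

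First I would set $h=f-g$ and partition $\O=E^+\sqcup E^-$, where $E^+:=\{z\in\O:h(z)\ge 0\}$ and $E^-:=\{z\in\O:h(z)<0\}$. These are measurable sets, and by construction $\int_{E^+} h\,\d z\ge 0$ and $\int_{E^-} h\,\d z\le 0$. Since $\mu$ and $\nu$ are both probability measures, $\int_\O h\,\d z = \mu(\O)-\nu(\O)=0$, which forces
\[
\int_{E^+} h\,\d z = -\int_{E^-} h\,\d z = \tfrac{1}{2}\int_\O |h|\,\d z = \tfrac{1}{2}\|f-g\|_{L^1}.
\]

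Next I would prove the upper bound. For any Borel set $E\in\Sigma$, writing $\mu(E)-\nu(E)=\int_E h\,\d z$ and splitting the integral over $E\cap E^+$ and $E\cap E^-$, one has $\int_{E\cap E^+} h\,\d z \le \int_{E^+} h\,\d z$ and $\int_{E\cap E^-} h\,\d z \ge \int_{E^-} h\,\d z$. Therefore both $\mu(E)-\nu(E)$ and $\nu(E)-\mu(E)$ are bounded above by $\int_{E^+} h\,\d z = \tfrac{1}{2}\|f-g\|_{L^1}$, giving $|\mu(E)-\nu(E)|\le \tfrac{1}{2}\|f-g\|_{L^1}$, and taking the supremum over $E$ yields one inequality. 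For the reverse inequality I would simply test the supremum on $E=E^+$: then $\mu(E^+)-\nu(E^+)=\int_{E^+} h\,\d z = \tfrac{1}{2}\|f-g\|_{L^1}$, so equality is attained.

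There is no serious obstacle here; the only subtlety is the measurability of $E^+$ (immediate from the measurability of $f$ and $g$) and the crucial use of $\mu(\O)=\nu(\O)=1$ to pair the positive and negative parts of $h$. Combining the two bounds gives the claimed identity.
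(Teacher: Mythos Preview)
Your argument is correct and is the standard proof of this elementary identity. The paper itself explicitly skips the proof of this lemma, so there is nothing to compare against; your Hahn--Jordan-style decomposition via the sign of $h=f-g$, together with the observation that $\int_\O h\,\d z=0$ because both measures have unit mass, is exactly the expected route.
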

	\footnotetext{We use the following notation: $\|\cdot \|_{L^1} = \|\cdot \|_{L^1(\O)}$ unless the set of integration is specified otherwise.}
	We skip the proof of this lemma. 
	
	We define the weighted $L^1$ distance (or weighted total variation distance) by 
	\begin{align*}
		\|f\|_\phi : = \int_\O \phi (z) |f(z)| (\mathrm{d} z),
	\end{align*} where $f$ is a measurable function and $\phi: \O \to [1, +\infty)$ is a measurable weight function. We define the space $\mc M_\phi := \{ f(\mathrm{d}z) \in \mc M (\O)  \mid f \phi \in L^1(\O)\}$ and we denote $\mc P_\phi (\O):= \mc M_\phi (\O) \cap \mc P$. Notice that when $\phi=1$ the norm corresponds to the $L^1$ or the total variation distance (on the set $\O$) and we use, with a slight abuse, the following notation: $\| \cdot \|_1 = \| \cdot \|_{L^1} = \| \cdot \|_{TV}$. Moreover for a given weight function $\phi$ we also define for $p \in [1, +\infty)$
	\begin{align*}
		L^p_\phi (\O):= \{f (\mathrm{d} z)\in \mc M (\O) \mid f \phi \in L^p (\O)\}, \quad \text{and} \quad \|f\|_{L^p_\phi} = \|f \phi \|_{L^p}.
	\end{align*} We denote $\mc P_\phi^p (\O) := L_\phi^p (\O) \cap \mc P$. 

For $n \in \N$, $1 \leq p \leq \infty$, we also define the Sobolev spaces $ W^{n,p}(\O)$ as
\begin{align*} 
	W^{n,p}(\O) : = \{ f \in L^p (\O) \mid D^\alpha f \in L^p (\O), \, 0 \leq | \alpha | \leq n \},
\end{align*} where, for every multi-index $\alpha$, $  D^\alpha f $ are the partial derivatives of $f$, i.e. $ D^\alpha f  = \dfrac{\p^{|\alpha|} f}{\p z_1^{\alpha_1} \cdots \p z_d^{\alpha_d} }$.

	A continuous-time Markov process is defined by a family of transition probability functions $P_t(z, E)$ for $t \geq 0$, $z \in \O$ where $E$ is a Borel set. Notice that $(t,z) \mapsto P_t(z, E)$ is a measurable function for all Borel sets $E$ and $E \mapsto P_t(z, E)$ is a probability measure on $\O$ for all $(t,z)$. The transition probability functions generate a linear semigroup $(S_t)_{t \geq0}$. The action of $S_t$ on finite measures and the action of its dual semigroup $S^*_t$ on measurable functions are given by 
	\begin{align*}
		S_t \mu (E) = \int_\O P_t(z, E) \mu (\mathrm{d} z) \quad \text{and} \quad S_t^* \phi (z) = \int_\O \phi (y) P_t (z, \d y),
	\end{align*} where $\phi : \O \to [0, + \infty)$ is a measurable function and $\mu \in \mc M(\O)$ is a finite measure on $\O$. 
	
	A Markov semigroup $S_t$ satisfies the following properties:
	\begin{itemize}
		\item $S_0 = \mbox{Id}$ or $P_o(z, \cdot) = \delta_z$ for all $z \in \O$.
		\item $S_{t+s} = S_t S_s$ for all $t,s \geq 0$ (semigroup property).
		\item $t \mapsto S_t \mu$ for all $\mu \in \mc M (\O)$ is continuous.
		\item $S_t^* \phi \geq 0$ if $\phi \geq 0$ for any $t \geq 0$ and any measurable function $\phi$ (positivity preserving).
		\item $\|S_t^* \phi \|_\infty = \|\phi \|_\infty$ for any measurable function $\phi$ (mass-preserving, contraction semigroup).
		\item $S_t^* \mathds{1}(z) = \mathds{1}(z) $ for any $z \in \O$.
	\end{itemize}
	A finite measure $\mu \in \mc M (\O)$ is an invariant/stationary measure for the Markov semigroup $S_t$ if $S_t \mu = \mu$ for all $t \geq 0$. Moreover if $f$ solves \eqref{eq:kinetic}, then $S_tf_0 (z)= f(t,z)=f_t$ represents the solution of  \eqref{eq:kinetic} with initial data $f_0$. 
	
	A Markov semigroup is called \emph{Feller} if it maps the space of bounded measurable functions to the space of continuous functions.
	
	\paragraph{Statements of the theorems.}  With these notations, now we give the statements of Harris-type theorems in the spirit of  \cite{HM11, DFG09, H21, CM21}. We skip the proofs and rather focus on the essence of these results and how to use them practically on some kinetic equations. We refer the reader to \cite{HM11, DFG09, H21, CM21} for the proofs. 
	
	Let us start with Doeblin's theorem. 
	\begin{theorem}[Doeblin's theorem]
		\label{thm:Doeblin} 
		Let $(S_t)_{t\geq 0}$ be a Markov semigroup defined on $\mathcal{M} (\Omega)$ satisfying the following condition:
		\begin{itemize}
			\item[] There exist a constant $\alpha \in (0,1)$, a probability measure $\eta$ and some time $\tau>0$ such that 
			\begin{align} \tag{\mbox{Doeblin's condition}} \label{Doeblin}
				S_{\tau} \mu \geq \alpha \eta, \quad \mbox{for all } \mu \in \mc P(\O).
			\end{align} 
		\end{itemize}
		Then $(S_t)_{t\geq 0}$ has a unique invariant probability measure $\mu_\infty$ and for any $\mu \in \mc P (\O)$
		\begin{align*}
			\|S_{t} \mu - \mu_\infty\|_{1} \leq C  e^{-\lambda t} \|\mu - \mu_\infty\|_{1}, \quad \mbox{for all } t \geq 0,
		\end{align*} where $C: = 1/ (1-\alpha)$, $\lambda : = - \log (1-\alpha )/ \tau$. 
	\end{theorem}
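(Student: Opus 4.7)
The plan is to establish a one-step contraction in total variation at time $\tau$, iterate it, and extract from the resulting Cauchy-sequence argument both the existence of $\mu_\infty$ and the quantitative decay. The crucial preparatory observation is that, for any two distinct $\mu,\nu\in\mc P(\O)$, the signed measure $\mu-\nu$ has zero total mass and admits a Hahn--Jordan decomposition $\mu-\nu = \beta(\tilde\mu - \tilde\nu)$, where $\beta := \|\mu-\nu\|_{TV}$ and $\tilde\mu,\tilde\nu\in\mc P(\O)$ are mutually singular probability measures. This is what lets us access Doeblin's condition, which is stated only for probability measures.

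First I would apply Doeblin's condition to $\tilde\mu$ and $\tilde\nu$, obtaining two nonnegative measures $S_\tau\tilde\mu - \alpha\eta$ and $S_\tau\tilde\nu - \alpha\eta$, each of total mass $1-\alpha$. Writing
\begin{align*}
S_\tau\tilde\mu - S_\tau\tilde\nu = (S_\tau\tilde\mu - \alpha\eta) - (S_\tau\tilde\nu - \alpha\eta)
\end{align*}
and invoking the triangle inequality in $\|\cdot\|_{TV}$ yields $\|S_\tau\tilde\mu - S_\tau\tilde\nu\|_{TV} \leq 1-\alpha$, whence
\begin{align*}
\|S_\tau\mu - S_\tau\nu\|_{TV} = \beta\,\|S_\tau\tilde\mu - S_\tau\tilde\nu\|_{TV} \leq (1-\alpha)\,\|\mu-\nu\|_{TV}.
\end{align*}
Iterating produces $\|S_{n\tau}\mu - S_{n\tau}\nu\|_{TV} \leq (1-\alpha)^n\,\|\mu-\nu\|_{TV}$ for every $n\in\N$.

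Second, to reach arbitrary $t\geq 0$, I decompose $t = n\tau + r$ with $n = \lm{t/\tau}$ and $r\in[0,\tau)$, use that $S_r$ is non-expansive in $\|\cdot\|_{TV}$ as a direct consequence of positivity and mass preservation, and then observe that $(1-\alpha)^n = (1-\alpha)^{t/\tau}(1-\alpha)^{-r/\tau} \leq (1-\alpha)^{-1} e^{-\lambda t}$ with $\lambda = -\log(1-\alpha)/\tau$. This delivers the claimed decay with $C = 1/(1-\alpha)$. For existence and uniqueness of $\mu_\infty$, I fix any $\mu_0\in\mc P(\O)$: the contraction just proved makes $(S_{n\tau}\mu_0)_{n\in\N}$ Cauchy in the complete metric space $(\mc P(\O),\|\cdot\|_{TV})$, so it converges to some $\mu_\infty$. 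Passing to the limit in $S_\tau S_{n\tau}\mu_0 = S_{(n+1)\tau}\mu_0$ yields $S_\tau\mu_\infty = \mu_\infty$, and the one-step contraction forces $\mu_\infty$ to be the \emph{only} $S_\tau$-fixed probability measure; applying this uniqueness to $S_s\mu_\infty$ for arbitrary $s\geq 0$ upgrades invariance to all continuous times.

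The one delicate point is really the Hahn--Jordan reduction: Doeblin's condition is phrased for probability measures, but the natural quantity one wants to contract is the signed measure $\mu-\nu$, and the normalization factor $\beta = \|\mu-\nu\|_{TV}$ must be tracked carefully so that the contraction rate comes out sharp as $(1-\alpha)$ rather than a pessimistic $2(1-\alpha)$. Everything else is routine bookkeeping built on the standard properties of Markov semigroups listed in the preamble to the theorem.
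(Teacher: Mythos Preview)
Your argument is correct and is exactly the standard proof found in the references the paper points to. Note, however, that the paper itself does \emph{not} prove this theorem: immediately before stating Theorem~\ref{thm:Doeblin} the author writes ``We skip the proofs and rather focus on the essence of these results\ldots\ We refer the reader to \cite{HM11, DFG09, H21, CM21} for the proofs,'' and this is repeated after Theorem~\ref{thm:subgeoHarris}. So there is no in-paper proof to compare against; your write-up would serve as a self-contained replacement for that deferral. The Hahn--Jordan reduction you single out as the delicate step is precisely the device used in those references to obtain the sharp contraction factor $(1-\alpha)$, and your treatment of the fractional remainder $r\in[0,\tau)$ via the non-expansiveness of $S_r$ is what yields the stated constant $C=1/(1-\alpha)$.
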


	Doeblin's theorem gives exponential convergence to a unique equilibrium once the Markov semigroup satisfies a positivity condition uniformly on the whole state space. However, it is often difficult to show such a uniform positivity condition, e.g. when the state space is unbounded. Harris's theorem extends this result to such cases via a Lyapunov type inequality on the transition probabilities governing the flow of the process.
	
	\begin {theorem} [Harris's theorem]	\label{thm:harris} Let $(S_t)_{t\geq 0}$ be a Markov semigroup defined in $\mathcal{M}(\O)$ satisfying the following two conditions:
	\begin{enumerate}[label=(\roman*)]
		\item There exist $\gamma \in (0,1)$, $K \geq 0$, a continuous function $\phi : \O\to [1, + \infty)$ and some time $\tau>0$ such that 
		\begin{align}
			\tag{Foster-Lyapunov condition} \label{Foster_Lyap}
			S_\tau^* \phi (z) \leq \gamma \phi (z) + K.
		\end{align} 
		\item There exist a constant $\alpha \in (0,1)$, a probability measure $\eta $ and some time $\tau>0$ such that
		\begin{align}
			\tag{minorisation condition} \label{Minorisation}
			S_{\tau} \mu \geq \alpha \eta, \quad \mbox{for all } \mu \in \mc P (\mc A),
		\end{align} where $\mc A: = \{z : \phi (z) \leq R\}$ and $R > 2K / (1 - \alpha)$. 
	\end{enumerate} Then $(S_t)_{t \geq 0}$ has a unique invariant probability measure $\mu_\infty$ and for any $\mu \in \mc P (\O)$ , there exist some constants $C>1$, $\lambda >0$ such that 
	\begin{align*}
		\|S_{t} \mu - \mu_\infty\|_{\phi} \leq C  e^{-\lambda t} \|\mu - \mu_\infty\|_{\phi}, \quad \mbox{for all } t \geq 0.
	\end{align*}
\end{theorem}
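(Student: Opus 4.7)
The plan is to follow the Hairer--Mattingly strategy of introducing a twisted norm that simultaneously captures the total variation distance and the $\phi$-weighted tail, and then to prove that $S_\tau$ is a strict contraction in this norm. Specifically, I would work on the space $\mc P_\phi(\O)$ equipped with the metric
\[
d_\beta(\mu,\nu) := \int_\O \bigl(1 + \beta\,\phi(z)\bigr)\, d|\mu-\nu|(z),
\]
where $\beta>0$ is a small parameter to be tuned. Note that for $\beta=0$ this is (twice) the total variation distance, and for $\mu=\mu_\infty$ this is comparable to $\|\nu-\mu_\infty\|_\phi$, so a contraction in $d_\beta$ directly yields the conclusion.

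The core step is to establish that there exists $\bar\gamma\in(0,1)$ with
\[
d_\beta(S_\tau\mu, S_\tau\nu) \le \bar\gamma\, d_\beta(\mu,\nu)
\quad\text{for all } \mu,\nu\in\mc P_\phi(\O).
\]
To do this I would split into two regimes according to whether $\int\phi\, d(\mu+\nu)$ is larger or smaller than a carefully chosen threshold slightly bigger than $R$. In the large regime, the mass lies predominantly outside the sublevel set $\mc A$, so \eqref{Foster_Lyap} applied to the positive and negative parts of $\mu-\nu$ yields contraction of the $\beta\phi$-weighted part, and the trivial bound $\|S_\tau(\mu-\nu)\|_{TV}\le \|\mu-\nu\|_{TV}$ on the unweighted part is absorbed provided $\beta$ is small enough relative to $(1-\gamma)/R$. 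In the small regime, after renormalising the Hahn decomposition of $\mu-\nu$ one can realise the positive and negative parts as probability measures effectively supported in $\mc A$, and \eqref{Minorisation} gives a Doeblin-type coupling bound $\|S_\tau\mu - S_\tau\nu\|_{TV} \le (1-\alpha)\|\mu-\nu\|_{TV}$; the weighted part is controlled by the Foster--Lyapunov inequality applied once more. The condition $R>2K/(1-\alpha)$ is precisely what makes the two regimes match with a common contraction constant $\bar\gamma<1$.

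Once strict contraction in $d_\beta$ is established, the existence and uniqueness of $\mu_\infty$ follow from a Banach fixed-point argument on the complete metric space $(\mc P_\phi(\O), d_\beta)$, together with iteration of the Foster--Lyapunov inequality to guarantee that the Picard sequence $S_{n\tau}\mu$ stays inside a $d_\beta$-bounded set. Invariance under the full semigroup $(S_t)_{t\geq 0}$ (not only $S_\tau$) comes from applying uniqueness to $S_s\mu_\infty$ for arbitrary $s\ge 0$ and using the semigroup property. Exponential convergence is then obtained by iterating the contraction, $d_\beta(S_{n\tau}\mu,\mu_\infty)\le \bar\gamma^{\,n} d_\beta(\mu,\mu_\infty)$, and interpolating in $t$ using the Foster--Lyapunov inequality to control the $\phi$-moments of $S_t\mu$ for $t\in[0,\tau]$, yielding the claimed rate $\lambda = -\log(\bar\gamma)/\tau$ (up to the prefactor $C$).

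The main obstacle is calibrating the free parameter $\beta$ so that both regimes in the dichotomy produce the same contraction rate. In the large-$\phi$-mass regime one needs $\beta$ large enough that $(\gamma + 2K/R)$ governs the decay; in the small-$\phi$-mass regime one needs $\beta$ small enough that the TV contraction $(1-\alpha)$ is not spoiled by an additive $\beta K$ error. Reconciling these competing demands, and verifying that the Hahn-decomposition bookkeeping in the small regime truly produces honest probability measures on $\mc A$ so that \eqref{Minorisation} can be invoked, is the technically delicate heart of the argument.
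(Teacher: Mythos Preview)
The paper does not actually prove this theorem: immediately after stating Theorems~\ref{thm:Doeblin}--\ref{thm:subgeoHarris} it writes ``We skip the proofs and rather focus on the essence of these results\ldots We refer the reader to \cite{HM11, DFG09, H21, CM21} for the proofs,'' and later repeats ``For the proofs of these theorems we refer to \cite{H21, DFG09, CM21}.'' So there is no in-paper argument to compare against.

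That said, your proposal is the standard Hairer--Mattingly proof from \cite{HM11}, which is precisely one of the references the paper defers to, and the outline is correct. The only places worth tightening are the ones you yourself flag: in the ``small'' regime the Hahn parts of $\mu-\nu$ are not literally supported in $\mc A$, and the clean way to handle this (as in \cite{HM11,H21}) is to split according to whether $\phi(z_1)+\phi(z_2)\le R$ for a coupling of Diracs rather than according to $\int\phi\,d(\mu+\nu)$, which avoids the renormalisation issue entirely. The explicit constants the paper records in the Remark following the theorem (the choice $\beta=\alpha_0/K$ and $\bar\alpha=\min\{\alpha-\alpha_0,\ldots\}$, with $\lambda=-\log(1-\bar\alpha)/\tau$) come out of exactly this computation, so your calibration of $\beta$ would reproduce them.
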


\begin{remark}
	In order to verify the \ref{Foster_Lyap} (especially for continuous-time Markov processes) it is convenient to consider the generator of the process since it is more accessible than the transition probability function in general. In fact, to show that the \ref{Foster_Lyap} holds what we require is a function $\phi(z)$ such that $\lim_{|z|\to +\infty} \phi(z) = +\infty$ satisfying
	\begin{align} \label{eq:FL2}
		\int \phi (z) f (\tau,\mathrm{d} z) \leq \gamma \int \phi (z) f_0(\mathrm{d }z) + K, 
	\end{align} where $f(\tau,z)$ solves an equation $\p_t f = \mc L f$ at time $\tau>0$ for any initial data $f_0$ which is a probability measure. Then the \ref{Foster_Lyap} is equivalent to the following inequality:
	\begin{align} \label{eq:FL3}
		\mc L^* \phi (z) \leq - \zeta \phi (z) + D,
	\end{align} for some positive constants $\zeta $ and $D$ where $\mc L^*$ is the formal adjoint of $\mc L$, or it is the \emph{forward operator} or the \emph{infinitesimal generator} of the Markov process defined via
	\begin{align*}
		\frac{\d}{\d t} S_t^* \psi \Big |_{t=0} = \mc L^* \psi,
	\end{align*} for all $\psi \in C_c^\infty (\O)$. We can verify the fact that \eqref{eq:FL3} implies the \ref{Foster_Lyap} with $\gamma = e^{-\zeta \tau }$ and $K = D/\zeta $ by an easy computation. First we notice that \eqref{eq:FL3} is nothing but the following: 
	\begin{align} \label{eq:FL4}
		\frac{\d}{\d t} \int \phi (z) f (t,\mathrm{d} z) \leq -\zeta  \int \phi (z) f (t,\mathrm{d} z) + D.
	\end{align} 
	We multiply $\int \phi (z) f (t,\mathrm{d} z)$ with $e^{\zeta  t}$ and take a derivative in $t$ to obtain, using \eqref{eq:FL4},
	\begin{align*}
		\frac{\d}{\d t} \left( e^{\zeta  t}\int \phi (z) f (t,\mathrm{d} z) \right) &= \zeta   e^{\zeta  t} \int \phi (z) f (t,\mathrm{d} z) + e^{\zeta  t}  \frac{\d}{\d t} \int \phi (z) f (t,\mathrm{d} z)  \\
		&\leq  \zeta  e^{\zeta  t} \int \phi (z) f (t,\mathrm{d} z) - \zeta   e^{\zeta  t} \int \phi (z) f (t,\mathrm{d} z)  + D e^{\zeta  t} \\
		&\leq D e^{\zeta  t}.
	\end{align*}
	Now we integrate this between $0$ and $\tau$ and we obtain
	\begin{align*}
		\int_0^\tau 	\frac{\d}{\d t} \left( e^{\zeta  t}\int \phi (z) f (t,\mathrm{d} z) \right)  \d t = e^{\zeta \tau }\int \phi (z) f (\tau ,\mathrm{d} z) - \int \phi (z) f _0(\mathrm{d} z) \leq D \int_0^\tau e^{\zeta t} \d t = \frac{D}{\zeta } (e^{\zeta \tau} -1)
	\end{align*} and this implies 
	\begin{align*}
		\int \phi (z) f (\tau ,\mathrm{d} z) \leq e^{-\zeta \tau } \int \phi (z) f _0(\mathrm{d} z)  + \frac{D}{\zeta } (1 - e^{-\zeta \tau}). 
	\end{align*} Moreover, we notice that the \ref{Foster_Lyap} is equal to
	\begin{align} \label{eq:FL5}
		\int \phi(z) f(\tau, \mathrm{d}z) \leq \gamma \int \phi(z) f_0(\mathrm{d}z)  + K. 
	\end{align}
	Then taking $\gamma = e^{-\zeta \tau }$ and $K = \frac{D}{\zeta} (1 - e^{-\zeta \tau}) \leq \frac{D}{\zeta}$ gives \eqref{eq:FL2}, equivalently the \ref{Foster_Lyap}.
\end{remark}
\begin{remark}
	The constants in Theorem \ref{thm:harris} can be computed explicitly. Setting $\gamma_0 \in [\gamma + 2K/R, 1)$, we choose $\beta = \alpha_0/K$ for any $\alpha_0 \in (0, \alpha)$ then we have $C := 1- \bar \alpha$  and $\lambda : = - \log (1- \bar \alpha)/ T$ where $\bar \alpha = \min \{ \alpha + \alpha_0, (2 + R\beta (2-\gamma_0))/ (2+ R\beta)\}$ (see Remark 3.10 in \cite{H21}).
\end{remark}
When it is not possible to find a Lyapunov function satisfying the \ref{Foster_Lyap}, but a weaker inequality, a version of Harris's theorem can still be used to obtain sub-geometric convergence rates. Here we state (and later use) a version which is found in \cite{H21, CM21} but for different versions we also refer to \cite{DFMS04} and particularly Theorem 5.2.c in \cite{DMT95}, Theorems 3.10 \& 3.12 in \cite{DFG09}, Theorem 1.2 in \cite{BCG08} and references therein.

\begin{theorem}[Harris's theorem for subgeometric convergence] \label{thm:subgeoHarris} Let $(S_t)_{t\geq 0}$ be a Markov semigroup defined in $\mathcal{M}(\O)$. Let $S_t$ be also Feller for all $t \geq 0$ satisfying the following two conditions:
	\begin{enumerate}[label=(\roman*)]
		\item There exist constants $\zeta >0$, $D \geq 0$, a continuous function $\phi : \O\to [1, + \infty)$ with pre-compact sublevel sets such that 
		\begin{align}
			\tag{weaker Foster-Lyapunov condition} \label{Foster_Lyap2}
			\mc L^* \phi (z) \leq -\zeta V (\phi) + D,
		\end{align} where $V: [1, + \infty)  \to [1, +\infty)$ is a strictly concave, positive and increasing function, and $\lim_{u\to + \infty} V^\prime (u)= 0$. 
		\item For every $R > 0$ there exist a constant $\alpha \in (0,1)$, a probability measure $\eta $ and some time $\tau>0$ such that
		\begin{align}
			\tag{sub-geometric minorisation condition} \label{Minorisation2}
			S_{\tau} \mu \geq \alpha \eta, \quad \mbox{for all } \mu \in \mc P ( \mc A),
		\end{align} where $\mc A : = \{z : \phi (z) \leq R\}$. 
	\end{enumerate} 
	Then $(S_t)_{t \geq 0}$ has a unique invariant probability measure $\mu_\infty$ satisfying
	\begin{align}
		\int V (\phi (z)) \mu_\infty (\mathrm{d} z) \leq D,
	\end{align} and there exists a constant $C$ such that for any $\mu \in \mc P (\O)$,
	\begin{align*}
		\|S_{t} \mu - \mu_\infty\|_{TV} \leq \frac{C\mu(\phi)}{H_V^{-1}(t)} + \frac{C}{(V \circ H_V^{-1})(t)},
	\end{align*} where $\mu (\phi) = \int \phi(z) \mu (\mathrm{d} z) $ and 
	\begin{align*}
		H_V = \int_1^t \frac{\d s}{V(s)}.
	\end{align*} 
\end{theorem}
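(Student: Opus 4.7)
The plan is to adapt the semigroup-based proof of the geometric Harris theorem (Theorem \ref{thm:harris}) to the sub-geometric setting, with the concave function $V$ and the time-change $H_V$ playing the role that the constant rate $\lambda$ plays in the geometric case. First I would establish existence and uniqueness of the invariant measure $\mu_\infty$ together with the stated moment bound. Existence follows from the weaker Foster-Lyapunov condition and the Feller property by a Krylov-Bogolyubov argument: since $\mc L^* \phi \leq -\zeta V(\phi) + D$ and $V \geq 1$, the moments $S_t^* \phi(z)$ stay bounded in $t$, and by pre-compactness of the sublevel sets of $\phi$ the time-averages $T^{-1}\int_0^T S_t \delta_z \, dt$ are tight, hence accumulate at an invariant probability measure. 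Integrating the Lyapunov inequality against $\mu_\infty$ and using $\int \mc L^* \phi \, d\mu_\infty = 0$ by stationarity yields $\zeta \int V(\phi) \, d\mu_\infty \leq D$, which up to the stated constant is the desired moment bound. Uniqueness follows from the minorisation condition: any two invariant measures must assign comparable mass to a common sublevel set $\mc A$, where Doeblin-type coupling forces them to coincide.

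For the convergence rate I would follow the strategy of Hairer \cite{H21} and Cañizo-Mischler \cite{CM21}, combining two complementary estimates on the coupled dynamics of $S_t \mu$ and $\mu_\infty = S_t \mu_\infty$. First, the weaker Foster-Lyapunov inequality, via Dynkin's formula, gives a deterministic sub-geometric decay of $S_t^* \phi$ governed by the scalar ODE $u'(t) = -\zeta V(u(t)) + D$; since $V$ is strictly concave with $V'(u) \to 0$ as $u \to +\infty$, its solutions decay at a rate determined by $H_V^{-1}$, producing the term $C\mu(\phi)/H_V^{-1}(t)$. Second, the minorisation condition on any sublevel set $\mc A = \{\phi \leq R\}$ yields a one-step coupling contraction with rate $\alpha$, so that while both coupled processes remain in $\mc A$ their total variation distance decays geometrically in the number of returns, and the number of such returns up to time $t$ is controlled by the Lyapunov bound on hitting times.

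The main obstacle will be to combine these two estimates into the precise bound $C\mu(\phi)/H_V^{-1}(t) + C/(V \circ H_V^{-1})(t)$. This requires a Young-type splitting $t = t_1 + t_2$, or equivalently an optimization over the threshold $R = R(t)$ defining the minorisation set: during the first phase the Lyapunov estimate drives the process into a sufficiently large sublevel set, while during the second the minorisation-based coupling erases the remaining gap. The optimal split is achieved when $t_1 \approx H_V(R(t))$, which is precisely why $H_V^{-1}$ appears in both terms of the stated rate. The assumption $V'(u) \to 0$ at infinity is needed to guarantee that $V \circ H_V^{-1}$ vanishes at infinity, so that the rate is non-trivial and the two error terms are of comparable order at the optimal split. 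Carrying out this interpolation carefully while tracking the dependence on $\mu(\phi)$ is the technically delicate part, and I would defer to the explicit Young-function machinery developed in \cite{H21, CM21}.
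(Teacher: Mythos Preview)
The paper does not prove this theorem: immediately after stating it, the author writes ``For the proofs of these theorems we refer to \cite{H21, DFG09, CM21}'' and moves on. So there is no in-paper proof to compare against. Your outline is consistent with the approach in those references---existence via Krylov--Bogolyubov using the Lyapunov bound and Feller property, the moment bound $\int V(\phi)\,d\mu_\infty \leq D$ from integrating the generator inequality against the invariant measure, and the rate via a two-phase Lyapunov/minorisation splitting optimized over the threshold $R$---and you correctly identify \cite{H21, CM21} as the place where the interpolation is carried out. One small correction: integrating $\mc L^*\phi \leq -\zeta V(\phi) + D$ against $\mu_\infty$ gives $\zeta \int V(\phi)\,d\mu_\infty \leq D$, i.e. $\int V(\phi)\,d\mu_\infty \leq D/\zeta$, not $\leq D$ as stated in the theorem; the paper's stated bound appears to have absorbed $\zeta$ into $D$, so this is a normalization issue rather than an error in your reasoning.
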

For the proofs of these theorems we refer to \cite{H21, DFG09, CM21}.  We note that instead of the condition $\lim_{u\to + \infty} V^\prime (u)= 0$ in \cite{DFG09, H21}, a weaker condition, $V(u) \leq u $ for any $u \geq 1$, is assumed in \cite{CM21} (see Hypothesis 7).  The latter is weaker in the sense that it allows linear growth at infinity, whereas the former essentially requires for $V$ to be flat at infinity.

After presenting the statements of the Harris-type theorems we now look at their applications on some kinetic equations. 

\section{Quantitative hypocoercivity results}
\label{sec:hypocoercivity}
In this section, we present some examples of applications of the Harris-type theorems (see Section \ref{sec:Harris}) on some kinetic equations, of the type \eqref{eq:kinetic}, which are motivated by some problems in physics and biology. The common feature of these equations is that the underlying process is a continuous-time Markov process which is defined by a linear, positivity- and mass-preserving semigroup.

Let $t \mapsto f_t$ solve \eqref{eq:kinetic} at time $t \geq 1$. The mass conservation property gives that $\int f _0\d z  = \int f_t \d z$. This means that if we consider an initial data which is a probability measure (this will be the case unless stated otherwise) then it will remain so. This semigroup is also positivity preserving. As we already defined in Section \ref{sec:Harris}, a semigroup having these properties is called a \emph{Markov semigroup}. 

In the subsequent sections, we look at the quantitative hypocoercivity estimates for some kinetic equations appearing in the context of physics and biology respectively.

\subsection{Kinetic equations arising in mathematical physics}
\label{sec:MathPhys}

In 1872, Ludwig Boltzmann derived the so-called \emph{Boltzmann equation} which is considered the fundamental equation in the kinetic theory of gases (see \cite{B64,V02}). The Boltzmann equation describes the statistical behaviour of a thermodynamical system in a non-equilibrium state. It is derived for the evolution of a dilute gas under the assumptions that the gas particles are undergoing \emph{elastic}, \emph{localised} and \emph{microreversible} collisions and that the velocities of two particles at the same position are \emph{uncorrelated }before the collision, i.e. \emph{the molecular chaos assumption}. These collisions are described by the \emph{Boltzmann collision operator}, i.e. $\mc C$ in  \eqref{eq:kinetic} is given by  
\begin{align} \label{eq:Boltzmann} 
	\mc C [f] := Q (f, f) =  \int_{\R^d} \int_{\S^{d-1}} B (|v-v_*|, \cos \theta ) (f'f_*' - ff_*) \d \sigma \d v_*.  \footnotemark
\end{align} \footnotetext{where $f' := f(t,x,v')$, $ f_*' := f(t,x,v_*')$, $f = f(t,x,v)$ and $f_* := f(t,x,v_*)$ }The velocities of two particles before and after a collision occurs, $(v',v_*')$ and $(v,v_*)$ respectively, are given by
\begin{align} \label{eq:vel_law}
	v' = \frac{v+v_*}{2} +\frac{|v-v_*|}{2} \sigma \quad v_*' = \frac{v+v_*}{2} -\frac{|v-v_*|}{2} \sigma. 
\end{align} where $\sigma \in \S^{d-1}$ is a unit vector on the $(d-1)-$dimensional unit sphere. The angle between pre- and post- collisional velocities is called the \emph{deviation angle} and it satisfies that
\begin{align*}
	\cos \theta := \frac{(v_*' - v') \cdot (v_* - v)}{|v_*-v|^2} = \frac{v-v_*}{|v-v_*|} \cdot \sigma. 
\end{align*} In \eqref{eq:Boltzmann}, $B$ is called the \emph{Boltzmann collision kernel} and its shape depends on how the particles interact. We assume that $B$ can be written as
\begin{align} \label{eq:Boltzmannkernel}
	B(|v-v_*|, \cos \theta ) = |v-v_*|^\gamma b \left( \cos \theta \right), \qquad  \gamma = \frac{s-(2d-1)}{s-1},
\end{align} for $s>2$ (excluding the \emph{Coulomb interactions} case, i.e. $s=2$). The cases where $0 < \gamma \leq 1$,  $\gamma =0$ and $-d < \gamma <0$ correspond to \emph{hard potentials}, \emph{Maxwellian potentials} and \emph{soft potentials} respectively.

The assumption that the collision kernel is integrable with respect to the angular variable is known as the \emph{Grad's angular cut-off assumption}. Throughout the section, we consider that this assumption holds.

\paragraph{Well-posedness.} The study of  existence of solutions for the Boltzmann equation dates back to \cite{DL89} where DiPerna and Lions established a foundation of the global existence of solutions in the $L^1$ setting for the Grad's angular cutoff case. In \cite{AV02}, Alexandre and Villani extended this to the non-cutoff cases. Following these landmark results, it has become a subject for many other works. 
For a detailed discussion about the literature we refer to a recent survey \cite{S22} by Silvestre where he addresses recent regularity results and some open problems in this direction.

\paragraph{Long-time behaviour.}
Concerning the long-time behaviour of the Boltzmann equation, the famous \emph{H-theorem} states that the mathematical entropy $H(f)$ decreases in time, along the solutions of the Boltzmann equation, i.e.
\begin{align*}
	H[f] = \int_\O f \log (f) \d z, \quad \mbox{and} \quad \frac{\d }{\d t} H[f] = - \int_\O D[f] \d z \leq 0,
\end{align*} where $D[f] \geq 0$ is the so-called \emph{entropy dissipation}. 
Then there exists a unique global equilibrium state given by a \emph{local Maxwellian} (Gaussian distribution)
\begin{align*}
	f_\infty (x,v) = \frac{\rho}{(2 \pi T)^{d/2}} e^{- \frac{|v-u|^2}{2T}}:= M_{\rho, T, u} (x,v),
\end{align*} where the macroscopic quantities $\rho , u $ and $T$ are defined as \begin{align}
	\rho (t,x) &= \int_{\mc V} f(t,z) \d v, \tag{local density} \\
	u (t,x) &= \frac{1}{\rho (t,x)}\int_{\mc V} vf(t,z) \d v, \tag{local momentum} \\
	T (t,x) &= \frac{1}{d \rho(t,x)}\int_{\mc V} |v-u(t,x)|^2 f(t,z) \d v, \tag{local temperature}
\end{align} where $z=(x,v)$. The total mass, the total momentum and the total temperature of the system are conserved, i.e.
\begin{align*}
	\frac{\d}{\d t} \int_{\O \times \mc V} f (t,z) \begin{pmatrix}
		1\\
		v \\
		|v|^2/2
	\end{pmatrix} \d z = 0.
\end{align*}

In a prominent result \cite{DV05}, Desvillettes and Villani showed that the nonlinear Boltzmann equation converges to local Maxwellian at least at an algebraic rate provided that some a-priori strong Sobolev estimates are verified. Such estimates were established in spatially periodic domains near Maxwellians (see \cite{G95, G94}) but their validity in the spatially inhomogeneous setting in bounded domains remains still open. Furthermore, such estimates are not expected to hold in general non-convex domains (see e.g. \cite{G10} for further discussions).

There are previous works on the nonlinear Boltzmann equation, using a linearisation argument around the equilibrium (see e.g. \cite{U74,SA77}). The problem arising in this approach is that the convergence estimates are valid only once the system enters a small neighbourhood of the equilibrium.

\paragraph{Confinement.}A key mechanism to ensure the existence of an equilibrium state for spatially inhomogeneous kinetic equations is called the \emph{confinement}. There are several ways to induce confinement in a kinetic model (see e.g. \cite{V02} for this classification, other confinement mechanisms and a more detailed discussion on this matter): 
\begin{enumerate}[label=(\roman*)]
	\item \textbf{Torus confinement} simply refers to posing the equation on the torus, i.e. $x \in \T^d$. This is very convenient mathematically since the torus does not have a boundary. 
	\item \textbf{Potential confinement} means that the interaction between gas particles and their background is described via a \emph{confining potential} $\Phi(x)$, then the force is given by $F(x) = - \nabla_x \Phi (x)$. In this case, the potential $\Phi$ must satisfy $e^{-\Phi(x)} \in L^1(\R^d)$ to be confining.
	\item \textbf{Box confinement} refers to enclosing the system in a box or a vessel with boundary conditions. 
\end{enumerate}

Note that if e.g. $x \in \O \subset \R^d$, where $\O$ is a bounded set, even if a confining potential is considered, boundary conditions must be imposed. We will also consider cases where some boundary conditions are imposed in the absence of a collision term.

Throughout the section, we consider either $x \in \T^d$, or $x \in \R^d$. When $x \in \T^d$, the transport operator $\mc T$ in \eqref{eq:kinetic} is given by $\mc T = v \cdot \nabla_x f$. We define the normalised ($T=u=1$) global Maxwellian 
\begin{align} \label{Maxwell}
	M (v) : = \frac{1}{(2 \pi )^ {-d/2}}  e^{-\frac{|v|^2}{2}},
\end{align} which is the global equilibrium of the Boltzmann equation when $x \in \T^d$. Unless stated otherwise, when $x \in \R^d$ then we consider that the transport operator has also a macroscopic force term, i.e. $\mc T = v \cdot \nabla_x - \nabla_x \Phi (x) \cdot \nabla_v $, where $\Phi (x)$ is the confining potential.

\paragraph{Boundary conditions.}
The boundary conditions describe the interactions between the gas particles and the boundary of the domain which we denote as $\p \O$. Here we introduce some boundary conditions which are commonly used in the literature (we will also use them later). Moreover, we refer to e.g. \cite{CIP94, V02} for more detailed discussions on this topic.

Equation \eqref{eq:kinetic} may be supplemented with the boundary conditions given by
\begin{align} \label{general_boundary}
	\gamma_- f (t, z)= \mc R_{\gamma_+} f (t, z) \quad z \in \p \O \times \mc V,
\end{align}where $\gamma_\pm$ shows the trace of $f$ at the boundary sets $(0, \infty) \times \p_\pm \Sigma$ which are defined as 
\begin{align*}
	\p_\pm \Sigma := \{ z \in \p \O \times \mc V  \mid \pm ( v \cdot n_x) >0\},
\end{align*} where $n_x$ is the outward unit normal vector at $x \in  \p \O$. Here, $\Sigma_-$ and $\Sigma_+$ denote the incoming and the outgoing sets respectively. With this notation at hand, we define some common type of boundary conditions. 

\begin{enumerate}[label=(\roman*)]
	\item \textbf{Specular reflection} is the most natural boundary condition to consider and it means that the particles bounce back on the wall with the same pre- and post- collisional angles. The specular reflection is given by
	\begin{align}
		\label{specular_boundary}
		f(t, x,v)= f(t, x, R_x v), \quad R_xv = v - 2 (v \cdot n_x) n_x,
	\end{align} where $n_x$ is the outward unit normal vector at $x \in \p \O$. 
	\item \textbf{Diffuse reflection} means that the particles are adsorbed by the boundary before being re-emitted inside the domain according to the Maxwellian velocity distribution. The diffuse boundary condition, considering e.g. the incoming set, is given by
	\begin{align} 
		\label{diffuse_boundary}
		f(t, x,v) = \tilde R_+(x) M_{T_w}(x,v), \quad \tilde R_+(x) = \int_{\{v \cdot n_x >0\}} f(t, x,v) |v \cdot n_x| \d v,
	\end{align} where the Maxwellian $M_{T_w}(x,v) $ depending on the temperature of the wall (boundary) at $x \in \p \O$ denoted by $T_w(x)$ 
	\begin{align*}
		M_{T_w}(x,v) = \frac{c(x)}{(2 \pi T_w(x))^{d/2}} e^{\frac{-|v|^2}{2T_w(x)}}, \quad  c (x) = \left ( \int_{\{v \cdot n_x >0\}}   \frac{1}{(2 \pi T_w(x))^{d/2}} e^{\frac{-|v|^2}{2T_w(x)}} |v \cdot n_x| \d v \right )^{-1},
	\end{align*} for any $x \in \p \O$. It is defined similarly for $\tilde R_-$ where the integrals are taken over the set $\{v \cdot n_x<0\}$. 
\end{enumerate}
We notice that in the diffuse reflection case incoming and outgoing velocities aren't correlated for both the normal and tangential components. 
Moreover, it is physically irrelevant to assume that the particles will be reflected purely specular at the boundary. A more realistic description of the interactions at the boundary is given by the \emph{Maxwell boundary conditions} which combines both the specular and diffuse boundary conditions.
\begin{enumerate}[label=(\roman*)]
	\item[(iii)] \textbf{Maxwell boundary conditions} consist in taking a convex combination of the specular and the diffuse reflections, i.e. the reflection operator $\mc R_{\gamma_+}$ in \eqref{general_boundary} is given by
	\begin{align}
		\label{Maxwellgeneral}
		f(t,x,v) = \mc R_{\gamma_+} f (t,x,v): =  (1- \alpha (x)) f (x, R_x v) + \alpha (x) \tilde R_+ (x)M_{T_w} (x,v),
	\end{align} where $\alpha : \p \O \to [0,1]$ is a Lipschitz function called the \emph{accommodation coefficient}. We notice that $\alpha \equiv 0$ and $\alpha \equiv 1$ correspond to the pure specular reflection and the pure diffuse reflection respectively. 
\end{enumerate}
There are more complicated boundary conditions taking into account not only the effects of diffuse and specular reflections but also pressure of the particles on the boundary during an interaction. One of them is Cercignani-Lampis (CL) boundary conditions (see e.g. \cite{CIP94}) given below.

\begin{enumerate}[label=(\roman*)]
	\item[(iv)]  \textbf{Cercignani-Lampis boundary conditions} take into account that the probability distribution of the outgoing velocities keeps some information from the incoming velocities. In this case the reflection operatior $\mc R_{\gamma+}$ in \eqref{general_boundary} is given by
	\begin{align} \label{CL_boundary}
		\begin{split}
			\mc R_{\gamma+} f (t,x,v) =& \int_{\{u \cdot n_x >0\}} f(t,x,u) R (u \to v; x) |u \cdot n_x| \d u, \\
			R (u \to v; x) =& \frac{1}{T_w(x) r_{\perp}} \frac{1}{(2 \pi T_w(x)r_{\parallel} (2- r_{\parallel}))^{(d-1)/2}} e^{- \frac{|v_\perp|^2}{2T_w(x) r_\perp}} e^{-\frac{(1-r_\perp)|u_\perp|^2}{2T_w(x) r_\perp}} \\
			&\times e^{-\frac{|v_\parallel - (1-r_\parallel)u_\parallel|^2}{2T_w(x) r_\parallel (2-r_\parallel)}} I_0 \bigg( \frac{(1-r_\perp)^{1/2} u_\perp \cdot v_\perp}{T_w(x)r_\perp}\bigg),
		\end{split}
	\end{align} where 
	\begin{align*}
		v'_\perp := (v' \cdot n_x) n_x, \quad v'_\parallel := v'-v'_\perp, \quad  I_0 (y) := \frac{1}{\pi} \int_0^\pi e^{y \cos \theta} \d \theta,
	\end{align*} for any velocity $v'$, and $y \in \R$. The temperature of the boundary is denoted as $T_w(x)$ for all $x \in \p \O$. We denote the accommodation coefficients by $r_\perp \in (0,1]$ and $r_\parallel \in (0,2)$. 
	We consider that the Cercignani-Lampis kernel $R$ is normalised (see the appendix of \cite{CH20} for a proof), i.e. for any $(x,u) \in \p_+ \Sigma$,
	\begin{align*}
		\int_{\{v \cdot n_x <0\}}R(u \to v;x) |v \cdot n_x| \d v =1.
	\end{align*} This normalisation ensures that the distribution of the outgoing velocities from $R$ is a probability distribution. Moreover, we note that, the case $(r_\perp, r_\parallel) = (1,1)$ corresponds to the diffuse reflection. 
\end{enumerate}
Finally, we remark that these boundary conditions conserve mass and positivity so that when we impose them we can still describe the process by a Markov semigroup (see Sections \ref{sec:Knudsen} and \ref{sec:nonlin_BGK}). We also mention a recent paper (cf. \cite{BCMT21}) where the authors studied the hypocoercivity for various linear kinetic equations with Maxwell boundary conditions. 

After this general introduction to the nonlinear Boltzmann equation, its properties and some boundary conditions, we look at the long-time behaviour of some kinetic models for interacting gas particles. These equations can be categorised into three (see \cite{V02} for this categorisation and some other variants):
\begin{enumerate}[label=(\roman*)]
	\item \textbf{Model equations:} the linear BGK, the nonlinear BGK equation on an interval, and the kinetic free-transport equation with boundary conditions describing the collisionless gas.
	\item \textbf{Linear models:} the linear Boltzmann and the linear degenerate Boltzmann equations.
	\item \textbf{Diffusive models:} the linear kinetic Fokker-Planck equation.
\end{enumerate}

We present the results following this order. 

\subsubsection{The linear BGK equation}

\label{sec:linearBGK}
The BGK (or relaxation Boltzmann) equation was introduced by the physicists Bhatnagar, Gross and Krook in 1954 (cf. \cite{BGK54}) as a simplified Boltzmann-like model for collision processes in gases. It satisfies the fundamental properties of the Boltzmann equation. Here we are interested in the linear version of it. In this case, the BGK collision operator is given by 
\begin{align} \label{eq:linBGK}
	\mc C [f] = \mc C_{ M}^+[f] - f =  M (v) \int_{\R^d} f ' \d v' - f,
\end{align} where $M$ is the normalised Maxwellian given by \eqref{Maxwell} and $f' := f(t,x,v')$.

In \cite{P89}, Perthame provided the first result on the global weak solutions to the linear BGK equation. The global well-posedness of the nonlinear BGK equation is also addressed in \cite{GP89}. In \cite{PP93}, the authors proved existence and uniqueness in bounded domains by using weighted $L^\infty$ bounds. Later in \cite{ZH07}, this result was extended to the $L^p$ setting. We also refer to a more recent paper \cite{WZ12} addressing the Cauchy problem for the BGK equation with an external force term.

Concerning the long-time behaviour, the exponential convergence to the unique equilibrium state, which is $f_\infty(x,v) = M(v) e^{-\Phi(x)}$, is shown in both the $L^2$ and $H^1$ settings (see e.g. \cite{DMS15,H06,CCG03,MN06}). Here we present the result of \cite{CCEY20} which shows quantitative geometric and subgeometric rates (depending on the confining potential) of convergence by using Doeblin and Harris theorems. This work also presents the first quantitative rate of convergence in the subgeometric setting.

\begin{theorem} \label{thm:linearBGK}
	Suppose that $t \mapsto f_t$ is the solution to the linear BGK equation \eqref{eq:kinetic}-\eqref{eq:linBGK} with initial data $f_0 \in \mc P (\O \times \mc V)$. We consider either of the three regimes below:
	\begin{itemize}
		\item[\textbf{R1.}] Suppose that $\O\times \mc V = \T^d \times \R^d$ (no confining potential, i.e. $\Phi =0$).
		\item[\textbf{R2.}] Suppose that $\O\times \mc V = \R^d \times \R^d$ and that the confining potential $\Phi \in C^2 (\R^d)$ satisfies that
		\begin{align}
			x \cdot \nabla_x \Phi (x) \geq \alpha |x|^2 + \beta \Phi (x) - \eta,
		\end{align} for some positive constants $\alpha, \beta, \eta$.
	\end{itemize}
	Then there exist positive constants $C, \lambda$ which are independent of $f_0$ such that
	\begin{align}
		\|f_t - f_\infty\|_{\phi} \leq C e^{-\lambda t} \|f_0 - f_\infty\|_{\phi},
	\end{align} where $f_\infty (x,v) = M(v)e^{-\Phi(x)}$ is the only equilibrium state and  $\phi =1 $ for \textbf{R1} (corresponds to the total variation or the $L^1$ distance) and for \textbf{R2}, the weight function is $\phi= 1+ \varphi$ where $\varphi$ is given by
	\begin{align} \label{BGKphi}
		\varphi (x,v) = H(x,v) +  \frac{x \cdot v}{4} + \frac{|x|^2}{8},
	\end{align} and $H(x,v) := \Phi(x) + |v|^2/2$ is the total energy of the system.
	\begin{itemize}
		\item[\textbf{R3.}] Suppose that $\O\times \mc V = \R^d \times \R^d$ and that the confining potential $\Phi \in C^2 (\R^d)$ satisfies, for some $\xi \in (0,1)$, that
		\begin{align}
			x \cdot \nabla_x \Phi (x) \geq \alpha \langle x \rangle ^{2 \xi} + \beta \Phi (x) - \eta,
		\end{align} for some positive constants $\alpha, \beta, \eta$.
	\end{itemize}
	Then there exists a positive constant $C$ such that 
	\begin{align*}
		\|f_t - f_\infty\|_{1}  \leq \min \left \{ \|f_0 - f_\infty\|_{1}, C (1+t)^{-\frac{\xi }{ 1-\xi } } \|f_0\|_\phi \right \},
	\end{align*} where $f_\infty (x,v)= M(v)e^{-\Phi(x)}$ is the only equilibrium state and the weight function is $\phi = \varphi^\xi$, where $\varphi$ is given by \eqref{BGKphi}.
\end{theorem}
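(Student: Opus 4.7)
The plan is to treat the linear BGK equation as generating a mass- and positivity-preserving Markov semigroup on $\mathcal{P}(\Omega \times \mathcal{V})$, and apply Doeblin's Theorem \ref{thm:Doeblin} in regime \textbf{R1}, Harris's Theorem \ref{thm:harris} in \textbf{R2}, and its subgeometric version Theorem \ref{thm:subgeoHarris} in \textbf{R3}. In all three cases the workhorse is the Duhamel (mild) formula along the characteristic flow $(X_s,V_s)$ of $\mathcal{T}$,
\begin{align*}
f(t,x,v) = e^{-t} f_0\bigl(X_{-t}(x,v),V_{-t}(x,v)\bigr) + \int_0^t e^{-(t-s)} M\bigl(V_{s-t}(x,v)\bigr)\,\rho\bigl(s, X_{s-t}(x,v)\bigr)\,ds,
\end{align*}
where $\rho(s,y)=\int f(s,y,v')\,dv'$. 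Iterating this representation isolates the Maxwellian factor $M$ and provides the smoothing that underlies every minorisation step.

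\textbf{R1.} Here $\Phi\equiv 0$, so $X_{s-t}=x-(t-s)v$ and $V_{s-t}=v$. One Duhamel step gives $f(t,x,v)\geq M(v)\int_0^t e^{-(t-s)}\rho\bigl(s,x-(t-s)v\bigr)\,ds$. Applying Duhamel a second time to $\rho$ and performing the change of variables $w=y-(s-s')v'$ converts the Maxwellian weight into a Gaussian in the spatial variable. On $\mathbb{T}^d$, integrating $w$ over the fundamental domain and using mass conservation $\int\rho(s',\cdot)\,dx=1$ yields a strictly positive, $f_0$-independent lower bound. This produces $S_\tau\mu\geq \alpha\, M(v)\,\mathbf{1}_{\mathbb{T}^d}(x)$ for every $\mu\in\mathcal{P}$, which is \ref{Doeblin}, and Theorem \ref{thm:Doeblin} delivers the exponential rate.

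\textbf{R2.} Taking the Lyapunov candidate \eqref{BGKphi}, a direct calculation with $\mathcal{L}^*\psi = v\cdot\nabla_x\psi - \nabla_x\Phi\cdot\nabla_v\psi + \int\psi(x,v')M(v')\,dv' - \psi$ shows that the twist terms cancel and
\begin{align*}
\mathcal{L}^*\varphi(x,v) = -\frac{|v|^2}{4} - \frac{x\cdot\nabla_x\Phi(x)}{4} + \frac{d}{2}.
\end{align*}
The cross-term $(x\cdot v)/4$ in $\varphi$ is chosen precisely so that the potential produces a $-|x|^2$ drift; invoking $x\cdot\nabla_x\Phi\geq\alpha|x|^2+\beta\Phi-\eta$ together with a Young inequality to absorb $(x\cdot v)/4 + |x|^2/8$ into a small fraction of $|v|^2/4 + \alpha|x|^2/4$, one obtains $\mathcal{L}^*\varphi\leq -\zeta\varphi+D$, i.e.\ \ref{Foster_Lyap}. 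The minorisation on the sublevel sets $\{\varphi\leq R\}$, which are compact in both $x$ and $v$, follows by repeating the two-step Duhamel argument of \textbf{R1} along the characteristics of the flow with potential, using the continuity of the backward flow on a compact set to obtain a uniform Gaussian lower bound. Theorem \ref{thm:harris} then applies.

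\textbf{R3.} Under the weaker hypothesis $x\cdot\nabla_x\Phi\geq \alpha\langle x\rangle^{2\xi}+\beta\Phi-\eta$, the same computation yields only $\mathcal{L}^*\varphi\leq -c\,\varphi^{\xi}+D$. Choosing the weight $\phi=\varphi^\xi$ and $V(u)=u^\xi$, the chain rule for the transport part together with Jensen's inequality $\int\varphi^\xi M\,dv' \leq \bigl(\int\varphi M\,dv'\bigr)^\xi$ for the nonlocal part (which holds since $u\mapsto u^\xi$ is concave) produces the weak drift \ref{Foster_Lyap2}, $\mathcal{L}^*\phi\leq -\zeta V(\phi)+D'$. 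The minorisation on sublevel sets of $\phi$ is proven exactly as in \textbf{R2}. Theorem \ref{thm:subgeoHarris} applied with this $V$ gives a bound in terms of $1/(V\circ H_V^{-1})(t)$; explicit computation of $H_V(t)=(t^{1-\xi}-1)/(1-\xi)$ yields the rate $(1+t)^{-\xi/(1-\xi)}$ announced in the theorem. I expect the main obstacle to lie in \textbf{R2}: arranging the Young inequality so that the surviving coefficients are positive and a single constant $\zeta$ simultaneously dominates the kinetic energy term, the position confinement term and $\Phi$ itself, and subsequently in \textbf{R3} tracking these estimates through the concave modification $u\mapsto u^\xi$ without losing the sign of the drift.
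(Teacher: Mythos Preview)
Your overall architecture (Doeblin for \textbf{R1}, Harris for \textbf{R2}, subgeometric Harris for \textbf{R3}) matches the paper, and your Lyapunov computation in \textbf{R2} giving $\mathcal{L}^*\varphi=-|v|^2/4-x\cdot\nabla_x\Phi(x)/4+d/2$ is correct. Two steps, however, do not go through as written.

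\emph{Minorisation in \textbf{R2}.} Appealing to ``continuity of the backward flow on a compact set'' is not enough to reproduce the Doeblin lower bound. Your change-of-variables argument from \textbf{R1} rests on the map $v'\mapsto X_{s'-s}(y,v')$ being a diffeomorphism onto a set carrying the mass of $\rho$, with controlled Jacobian; for free transport this map is just $v'\mapsto y-(s-s')v'$, but with a potential neither surjectivity onto a fixed ball nor a Jacobian bound follows from continuity alone. The paper handles this by a scaling/implicit-function-theorem argument: writing $X_t^\varepsilon(x_0,v_0)=X_{\varepsilon t}(x_0,v_0/\varepsilon)$ for the flow of $\ddot x=-\varepsilon^2\nabla\Phi(x)$, one shows that for small $\varepsilon$ the perturbed flow is close to the straight-line flow and hence, for any target $x_1$ in the ball, there is a velocity hitting it in short time. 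This controllability step is the real content of the minorisation and cannot be bypassed.

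\emph{Lyapunov in \textbf{R3}.} Your choice $\phi=\varphi^\xi$ with $V(u)=u^\xi$ does not verify the weak drift. Chain rule plus the Jensen/concavity bound you invoke give only $\mathcal{L}^*(\varphi^\xi)\le \xi\varphi^{\xi-1}\mathcal{L}^*\varphi\le -c\xi\,\varphi^{2\xi-1}+D\xi\,\varphi^{\xi-1}$, and since $(\xi-1)^2>0$ forces $2\xi-1<\xi^2$ for every $\xi\in(0,1)$, the term $-c\xi\varphi^{2\xi-1}$ never dominates $-\zeta(\varphi^\xi)^\xi=-\zeta\varphi^{\xi^2}$; for $\xi\le 1/2$ it is not even coercive. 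The fix, and what the paper does, is simpler: keep $\varphi$ itself as the Lyapunov function and take $V(s)=1+s^\xi$. The bound $\mathcal{L}^*\varphi\le -c\varphi^\xi+D$ that you already derived is then precisely $\mathcal{L}^*\varphi\le -\zeta V(\varphi)+D'$, no chain rule or Jensen needed, and your computation of $H_V$ yields the stated rate $(1+t)^{-\xi/(1-\xi)}$.
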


\begin{proof}[Idea of the proof of Theorem \ref{thm:linearBGK}] Here we give the ideas of the proofs in three different regimes. For detailed computations we refer to Section 3 of \cite{CCEY20}. 
	
	We start by writing the solutions of \eqref{eq:kinetic} with \eqref{eq:linBGK} as  
	\begin{align} \label{firstDuhamel}
		f_t \geq e^{-t}\int_0^t \int_0^s \mc T_{t-s} \mc C_M^+ \mc T_{s-r} \mc C_M^+ \mc T_r f_0 \d r \d s.
	\end{align} This is done by repeatedly using Duhamel's formula. The rest of the proof relies on showing a lower bound on the actions of the operators $\mc C_M^+$ and $\mc T$ individually. Note that $(\mc T_t)_{t\geq 0}$ is the transport semigroup describing the action of $\mc T$ at each time $t \geq 0$. This part is different for each regime and the difference arises in the confining potential, more precisely the assumptions made on the confining potential. It is expected to have a slower convergence when the confinement is not strong enough. In the presence of a confining potential, the external force exerted on particles will deviate their trajectories from a straight line  (which is the result of the free transport), we need to track these trajectories to estimate the action of $\mc T$. As our starting point, we always take an initial data $f_0(z) = \delta_{(x_0,v_0)}(x,v)$ where $(x_0, v_0) \in \O \times \mc V$ is an arbitrary point. Since the underlying semigroup is Markov, later we can extend this to any probability measure.
	
	\paragraph{\textit{R1.}} In this case, the proof uses Doeblin's theorem (Theorem \ref{thm:Doeblin}). Thus we show how to verify \ref{Doeblin}. The idea is to show that $\mc C_M^+$ allows jumps to any small velocity $v$ and that the transport operator $\mc T$ allows reaching anywhere in a ball of radius $R$ once the starting point is any point inside $R$ with any starting velocity $v$.
	
	More precisely, for any $ \beta>0$ we can find an $\varepsilon >0$ small such that for any probability measure $\nu \in \mc P (\T^d \times \R^d)$ 
	\begin{align*}
		\mc C_M^+ \nu (x,v) \geq \varepsilon \mathds{1}_{\{|v| \leq \beta\}} \int_{\R^d} \nu (x,v') \d v'. 
	\end{align*} Notice that this is easily verified by choosing $\varepsilon = M(v)$ for any $v$ such that $|v|= \beta$. 
	
	For the transport part, given any time $\tau>0$, for all $t \geq \tau$, for any $x_0 \in \T^d$ we can find $\beta, \tilde R>0$ s.t. the following bound holds
	\begin{align*}
		\int_{B(0, \tilde R)\footnotemark} \mc T_t \left( \delta_{x_0} (x) \mathds{1}_{B(0, \beta)} (v) \right) \d v \geq \tau^{-d}.
	\end{align*} 
	By combining the last two estimates, we show that there exists $\tau>0$ such that 
	\begin{align*}
		S_{\tau} f_0 := f_{\tau} \geq \frac{1}{9}e^{-\tau} \tau^{2-d} \varepsilon^2 \mathds{1}_{\{|v| \leq \beta\}}.
	\end{align*} Thus, \ref{Doeblin} is satisfied and we can find $C$ and $\lambda$ explicitly. 
	\footnotetext{$B(z, R)$ denotes the open ball centered at $z$ with a radius $R$.}
	\paragraph{\textit{R2.}} In this case, it is easy to check the weight function \eqref{BGKphi} verifies the \ref{Foster_Lyap}, i.e. \eqref{eq:FL3} with $\zeta = \min \{\alpha, \beta, 1\}/4$ and $D = d/2 + \eta/4$. 
	
	For the \ref{Minorisation}, the estimate coming from the\textbf{ \em R1} case is not enough as we need to track the trajectories of particles under the action of the potential $\Phi$. The characteristics associated to the transport part in this case are given by
	\begin{align} \label{eq:ODE1}
		\begin{split}
			\dot x &= v, \\
			\dot v &= - \nabla \Phi(x).
		\end{split}
	\end{align} Denoting the solution of \eqref{eq:ODE1} with the initial data $(x(0)=x_0, v(0)=v_0) \in \O \times \mc V$ by $(X_t(x_0, v_0), V_t (x_0,v_0))$ we have 
	\begin{align*}
		X_t (x_0,v_0) = x_0 + v_0 t + \int_{0}^{t} \int_{0}^{s} \nabla \Phi (X_r(x_0,v_0)) \d r \d s,
	\end{align*} for any $(x_0,v_0) \in \R^d \times \R^d$ and for any $t$ where it is defined. The idea is to approximate $(X_t, V_t)$ by $(X_t^0, V_t^0)$, by choosing a pair $(X_t^\varepsilon , V_t^\varepsilon)$ between $(X_t, V_t)$ and $(X_t^0, V_t^0)$, which solves the following ODE:
	\begin{align} \label{eq:ODE2}
		\begin{split}
			\dot x &= v, \\
			\dot v &= - \varepsilon^2\nabla \Phi(x),
		\end{split}
	\end{align} with the initial data $(x_0,v_0) \in \R^d \times \R^d$. Note that  $(X_t^0, V_t^0)$ is the solution of \eqref{eq:ODE2} when $\varepsilon = 0$. We see that 
	\begin{align*}
		X_t^\varepsilon (x_0, v_0) = X_{\varepsilon t} \left(x_0, v_0/ \varepsilon \right) \quad  \mbox{and} \quad  V_t^\varepsilon (x_0, v_0) = \varepsilon V_{\varepsilon t} \left(x_0, v_0/ \varepsilon \right). 
	\end{align*} For a fixed $f$, and $\varepsilon \in [0,1]$ we define a $C^1$ map $F(\varepsilon,v) = X_t^\varepsilon (x_0,v)$. Then for any $\varepsilon =0$ we can find $v_*$ such that $F(0, v_*) = x_1$. Then, since $\nabla F(0, v_*) \neq 0$ by the implicit function theorem, for all $\varepsilon < \varepsilon_*$ there is a $C^1$ function $v(\varepsilon)$ such that $F(\varepsilon, v (\varepsilon)) =x_1$, i.e. $X_{\varepsilon t} (x_0, v(\varepsilon)/\ve) = x_1$. If $s$ is taken such that $s < \ve_*t$, then we can choose $v$ such that $X_s(x_0, v) = x_1$. We can obtain quantitative estimates on $\ve_*$ by finding bounds on $(X_t,V_t)$ and $\nabla \Phi (X_t)$ for $t>0$ in fixed intervals. The details for this can be found in Lemmas 3.4 and 3.5 of \cite{CCEY20}. Finally, for a given $R>0$ there exists a time $\tau >0$ and for any $0< s< \tau$ we can find $\alpha, \beta, \tilde R >0$ explicitly such that \begin{align} \label{ineq3}
		\int_{B(0, \tilde  R)} T_s (\delta_{x_0} (x) \mathds{1}_{\{ |v| \leq \beta\}}) 	\d v \geq \alpha \mathds{1}_{\{|x| \leq R\}},
	\end{align} for any $|x_0| \leq R$. We extend this argument to any initial data which is compactly supported in $x$ and $v$ on some ball and this verifies the \ref{Minorisation}. Then, the result follows from Theorem \ref{thm:harris}.
	\paragraph{\textit{R3.}} The \ref{Minorisation2} is verified in the same way as the \textbf{\em R2} case above.
	
	For the \ref{Foster_Lyap2}, we use the weight function \eqref{BGKphi} and it is easy to check that $\phi^\xi$ verifies the inequality of the \ref{Foster_Lyap2} with $\zeta = \min\{\alpha, \beta,1 \}/4$, for some $D>0$. We can take $V(s) = 1 + s^{\xi}$, then we have for large $u$
	\begin{align*}
		G_V(u) = \int_1^u \frac{\d s}{V(s)} =  \int_1^u  \frac{\d s}{1+s^{\xi}}  \sim 1 + u^{1-\xi},
	\end{align*} and for $t$ large we have
	\begin{align*}
		G_V^{-1} (t) \sim 1 + t^{\frac{1}{1-\xi}}, \quad \mbox{and} \quad V \circ 	G_V^{-1} (t)  \sim 1 + t^{{\frac{\xi}{1-\xi}}}.
	\end{align*} Therefore the result follows from Theorem \ref{thm:subgeoHarris}.
\end{proof} 

In the next two sections, we will look at how the boundary conditions affect the long-time behaviour of kinetic equations. We continue with the free-transport equation which describes the collisionless (Knudsen) gas.

\subsubsection{The kinetic free-transport equation with boundary conditions} 
\label{sec:Knudsen}

In this section, we study the kinetic free-transport equation, i.e. $\p_t f + v \cdot \nabla_x f = 0$. In this case, the average distance travelled by the gas particles between collisions is greater than the size of the domain so that the interaction between gas particles is negligible, i.e. $\mc C = 0$ in \eqref{eq:kinetic}. A gas in such a low-density regime is called a \emph{Knudsen gas}. The property of a Knudsen gas is that the \emph{Knudsen number}, which is the ratio between the mean free path and the length scale related to the size of the domain, is high. 

The particles follow the free-transport at a constant speed, i.e. $\mc T  = v \cdot \nabla_x $, until they hit the boundary. After hitting the boundary they get diffused and reflected according to some boundary conditions, i.e. \eqref{general_boundary}. This equation is used as a model for the evolution of a gas enclosed in a vessel $\O \subseteq \R^d$.  We refer to \cite{AC93} for the well-posedness of this equation in the $L^1$ setting. 

The lack of spectral gap for the kinetic free-transport equation (since there is no dissipative (collisional) part) makes the study of quantitative convergence rate a difficult problem and the expected rate is slower than exponential (see e.g. \cite{TAG10} for a numerical study). The relaxation to equilibrium depends on several factors such as the boundary conditions, and the shape and space dimension of the vessel. In \cite{AG11}, the authors proved rigorously the algebraic convergence rate considering the kinetic free-transport equation with diffuse reflection at the boundary which is kept at a uniform, constant temperature. They also proved an exponential convergence rate when the density of molecules is monokinetic. This essentially excludes the ``negative'' effect of the \emph{slow} particles on the equilibration. In \cite{KLT13}, the authors considered the diffuse reflection at the boundary with a constant temperature and a bounded symmetric domain in $\R^d$, $d = \{1,2,3\}$. They showed that the solution converges to the global Maxwellian with a given temperature with the optimal convergence rate $(1+t)^d$. Later in \cite{KLT14}, the same authors improved this result by considering variable boundary temperature and obtained the same optimal decay rate. Then in \cite{K15}, the author considered the equation in a spherical domain in $\R^d$, $d = \{1,2\}$ with the Maxwell boundary conditions and obtained an algebraic convergence rate. In \cite{BF22}, the authors could remove the symmetry assumption on the domain. More precisely, they obtained a polynomial convergence rate for the free-transport equation in a $C^2$ bounded domain in $\R^d$, $d=\{2,3\}$ with Maxwell boundary conditions. The Maxwell boundary conditions have a lesser thermalising effect than the diffuse reflection and this leads to slower convergence rates. More precisely the constant before the convergence rate, e.g., $C$ in \eqref{eq:hypocoercivity1}, is bigger, but the exponent of the polynomial convergence is unchanged. This comes from the fact that the specular reflection does not have any thermalising effect and does not contribute towards the convergence to equilibrium. Moreover, most of the aforementioned results are obtained by using several probabilistic arguments. We refer also e.g. to \cite{LMM_K21} where the authors study the convergence equilibrium for the free-transport equation with diffuse boundary conditions using the Tauberian approach.

In this section, we consider either the Maxwell boundary conditions \eqref{Maxwellgeneral} or the Cercignani-Lampis boundary conditions \eqref{CL_boundary} and state the main results of \cite{B20, B21}. In \cite{B20}, the author uses the subgeometric version of Harris's theorem (Theorem \ref{thm:subgeoHarris}) to obtain the polynomial rate of convergence for the kinetic free-transport equation describing the evolution of a Knudsen gas in a vessel in $\R^d$, $d = \{2,3\}$ with the Maxwell boundary conditions. This result does not require a symmetry assumption on the domain and it is extended to the case where the temperature varies at the boundary. In \cite{B21}, the same author gave the first quantitative subgeometric convergence results on this equation, with the Cercignani-Lampis boundary conditions and the result in \cite{B21} is optimal.

First, we define a map $\tilde \tau$ which gives the time of the first collision with the boundary for a particle starting from $z =(x,v) \in \O \times \mc V$ at time $0$, 
\begin{align*}
	\tilde \tau (x,v) = \begin{cases}
		\inf \{ t >0, \, x + tv \in \p \O\}, \quad &(x,v) \in \p_- \Sigma \cup \Sigma, \\
		0, \quad & (x,v) \in \p_+ \Sigma \cup \p_0 \Sigma.
	\end{cases}
\end{align*} where $\p_0 \Sigma : = \{z \in \p \O \times \mc V \mid v \cdot n_x= 0\}$. 

The main results of \cite{B20, B21} are given by

\begin{theorem} \label{thm1_collisionless gas}
	Suppose that $t \mapsto f_t$ is the solution to the free-transport equation $\p_t f + v \cdot \nabla_x f =0$ with initial data $f_0 \in \mc P_{\phi_d}(\O \times \mc V)$ where $\O \subseteq \R^d,$ with $d \in \{2,3\}$, $\O$ is a $C^2$ bounded domain and $ \mc V =\R^d$. The equation is complemented with boundary conditions \eqref{general_boundary}. We assume that $T_w(x) : \p \O \to \R_+$ is a continuous function, positive on $\p \O$ compact. We consider either of the following regimes below:
	
	\begin{itemize}
		\item [\textbf{R1.}] Suppose that the kinetic free-transport equation is complemented with the Maxwell boundary conditions \eqref{Maxwellgeneral} and given any constants $\alpha_0 \in (0,1)$ and $\alpha_1 \in (0,1)$  such that 
		\begin{alignat*}{3}
			\alpha (x) &\geq \alpha_0, \quad &x &\in \p \O, \\
			(1-\alpha_1)^i &\geq (1- \alpha_0) \quad &\mbox{for }&  i \in \{1,2,3,4\},
		\end{alignat*} where $\alpha (x)$ is the accommodation coefficient in \eqref{Maxwellgeneral}.
		\item [\textbf{R2.}] Suppose that the kinetic free-transport equation is complemented with the Cercignani-Lampis boundary conditions \eqref{CL_boundary} with  $r_\perp \in (0,1)$ and $r_\parallel \in (0,2)$, and that there exist $T_{\min}, T_{\max} > 0$ such that for all $x \in \p \O$
		\begin{align*}
			0 < T_{\min} \leq T_w(x) \leq T_{\max},
		\end{align*} where $T_w$ is the temperature of the boundary. 
	\end{itemize}
	Then there exists a unique equilibrium state $f_\infty \in \mc P_{\phi_d}(\O\times \mc V)$ for \textbf{R1} and $f_\infty \in \mc P_{\phi_{d,\ve}}(\O\times \mc V)$ for \textbf{R2} and there exist a positive constant $C$ and a function $\beta (t,d)$ for $d \in \{2,3\}$ such that 
	\begin{align} \label{eq:subgeo1}
		\|f_t - f_\infty\|_1 \leq C \beta (t,d)\|f_0 - f_\infty\|_{\phi_d},
	\end{align} where 
	\begin{align}
		\beta (t,d) &= \frac{\log (1+t)^{d+1}}{(1+t)^{d+1}} \tag{for \textbf{R1}},\\
		\beta(t,d) &=  \frac{1}{(1+t)^{d- \ve}}, \tag{for \textbf{R2}}
	\end{align}
	and the weight function $\phi_d$ is given by 
	\begin{align}
		\phi_d &= \left( e^2 + \frac{\diam (\O)  }{\alpha_1|v|} - \tilde \tau (x, -v)\right)^d  \log \left( e^2 + \frac{\diam (\O)}{\alpha_1 |v |} - \tilde \tau (x, -v)\right)^{-1.6 \frac{d}{d+1}} \tag{for \textbf{R1}}, \\
		\phi_d &= 	\phi_{d, \ve} =
		(1 + \tilde \tau (x,v) + \sqrt{|v|})^{d-\varepsilon}, \quad \varepsilon \in (0,1/2 ) \quad \mbox{and} \quad \phi_d = \phi_{d,0} \quad \mbox{for all  }  (x,v) \in \bar{\O} \times \bar{ \mc V}  \tag{for \textbf{R2}}, 
	\end{align} where $\diam (\O)$ is the diameter of $\O$ and $\bar{\O} \times \bar{\mc V} $ is the closure of the set $\O \times \mc V$. Moreover the constant $C$ is a different one for each regime.
\end{theorem}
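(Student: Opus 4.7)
The approach in both regimes is to apply Theorem \ref{thm:subgeoHarris} with the explicit weight functions $\phi_d$ (resp. $\phi_{d,\ve}$) appearing in the statement. The underlying Markov semigroup $(S_t)_{t \geq 0}$ is the free-transport semigroup with the prescribed reflection at $\p \O$: a particle at $(x,v)$ flows along $s \mapsto (x+sv, v)$ until it reaches $\p \O$ at time $\tilde\tau(x,v)$, at which point its mass is redistributed via \eqref{Maxwellgeneral} or \eqref{CL_boundary}. Since the interior dynamics are purely deterministic, the dissipation required for the Foster--Lyapunov inequality has to come from the boundary, while the randomisation required for the minorisation has to come from iterating several successive reflections.

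For the weaker \ref{Foster_Lyap2}, I would test $\phi_d$ against the interior generator $-v\cdot\nabla_x$ and observe that, between two consecutive reflections, $\tilde\tau(x,-v)$ (for \textbf{R1}) or $\tilde\tau(x,v)$ (for \textbf{R2}) decreases linearly in time; consequently $\phi_d$ itself decays along characteristics at a rate that can be bounded in terms of $V(\phi_d)$. The concave function $V$ has to be matched to $\phi_d$: in the Maxwell regime the logarithmic correction in $\phi_d$ forces $V(u)\sim u/(\log u)^{d/(d+1)}$, and computing $H_V^{-1}(t)$ and $V\circ H_V^{-1}(t)$ reproduces exactly the logarithmic--polynomial rate $\log(1+t)^{d+1}/(1+t)^{d+1}$; in the Cercignani--Lampis regime one takes $V(u)\sim u^{(d-1-\ve)/(d-\ve)}$ to obtain $(1+t)^{-(d-\ve)}$. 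The contribution of the reflection operator to $\mc L^*\phi_d$ is controlled thanks to the hypothesis $(1-\alpha_1)^i\geq(1-\alpha_0)$ (for \textbf{R1}) or the two-sided bound $T_{\min}\leq T_w\leq T_{\max}$ (for \textbf{R2}), which ensure that the integral of $\phi_d$ against the outgoing measure does not exceed the integral against the incoming one, up to an additive constant absorbed into $D$.

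For the \ref{Minorisation2} on the sublevel set $\mc A=\{\phi_d\leq R\}$, the strategy is to follow a Dirac $\delta_{(x_0,v_0)}$ with $(x_0,v_0)\in\mc A$ through several reflections: since $\phi_d$ is bounded on $\mc A$, the particle reaches $\p\O$ in a controlled time, and a fraction $\alpha_0$ of its mass is thermalised into the Maxwellian $M_{T_w}$ at the first impact point. The next free flight spreads this thermalised portion along a cone of trajectories; after four reflections (which is precisely the content of $(1-\alpha_1)^i\geq(1-\alpha_0)$ for $i\in\{1,\dots,4\}$) the iterated image is bounded below by a fixed probability measure $\eta$ supported on an open subset of $\O\times\mc V$, uniformly in $(x_0,v_0)\in\mc A$. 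For \textbf{R2} the same scheme is used but with the kernel $R(u\to v;x)$ in \eqref{CL_boundary}: uniform positivity of $I_0$ on bounded sets together with $T_w\geq T_{\min}$ yields a Maxwellian-type lower bound after a bounded number of reflections.

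The hardest step is the minorisation: controlling the composition of several boundary reflections in a general non-symmetric $C^2$ domain requires a careful analysis of the billiard map and of the Jacobian of successive reflections, in order to verify that the reflection points sweep out a nontrivial piece of $\p\O$ and that the resulting lower bound is uniform over $\mc A$. The Cercignani--Lampis case is especially delicate because the kernel $R$ does not fully decorrelate incoming and outgoing velocities, so the minorant must be extracted from a composition of kernels with the Bessel factor estimated from below on compact velocity sets. Once these two bounds are in hand, a direct application of Theorem \ref{thm:subgeoHarris} together with the computation of $H_V^{-1}$ and $V\circ H_V^{-1}$ produces the asserted rates $\beta(t,d)$.
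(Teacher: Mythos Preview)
Your high-level strategy matches the paper's: both regimes are handled by a subgeometric Harris argument, the Lyapunov structure comes from the linear decay of $\tilde\tau$ along free-transport characteristics, the minorisation is produced by iterating boundary reflections, and you correctly read the role of the hypothesis $(1-\alpha_1)^i\geq(1-\alpha_0)$ and the added difficulty in \textbf{R2} caused by the correlation between incoming and outgoing velocities in the Cercignani--Lampis kernel.

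The implementation, however, differs in one notable way. You propose to verify the pointwise generator inequality $\mc L^*\phi_d\leq -\zeta V(\phi_d)+D$ of Theorem \ref{thm:subgeoHarris} directly, with $V(u)\sim u/(\log u)^{d/(d+1)}$ for \textbf{R1} and $V(u)\sim u^{(d-1-\ve)/(d-\ve)}$ for \textbf{R2}. The paper instead works with an \emph{integrated} Lyapunov inequality and a \emph{hierarchy} of weights $\phi_i$, $i\in\llbracket d-1,d+1\rrbracket$: one proves
\[
\|f_\tau\|_{\phi_{d+1}}+C\int_0^\tau\|f_s\|_{\phi_d}\,\d s\leq \|f_0\|_{\phi_{d+1}}+\tilde C(1+\tau)\|f_0\|_1
\]
(using $v\cdot\nabla_x\tilde\tau=-1$), combines it with the minorisation to obtain a contraction in a tailor-made triple norm $|||\cdot|||_{\phi_{d+1}}:=\|\cdot\|_1+\gamma_1\|\cdot\|_{\phi_{d+1}}+\gamma_2\|\cdot\|_{\phi_d}$, and then interpolates down to get the rate in $\|\cdot\|_1$. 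The paper explicitly flags that this is not the literal condition of Theorem \ref{thm:subgeoHarris} but an integrated variant. Your pointwise route is morally equivalent and your computation of $V$, $H_V^{-1}$ and $V\circ H_V^{-1}$ reproduces the correct rates, but the boundary contribution cannot be written as a local term in $\mc L^*\phi_d$; the integrated formulation is what makes the boundary flux balance tractable. A second, smaller difference: the minorisation in the paper is established on sets of the form $\{\tilde\tau\leq R\}$ rather than on sublevel sets $\{\phi_d\leq R\}$, though these are closely related through the definition of $\phi_d$.
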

\begin{proof} [Idea of the proof of Theorem \ref{thm1_collisionless gas}] 
	The proof uses Theorem \ref{thm:subgeoHarris} for both the regimes. We give the sketch of the proofs separately for the two regimes. For the details we refer to \cite{B20,B21} for \textbf{\em R1} and \textbf{\em R2} respectively.
	\paragraph{\textit{R1.}} 	First we define a family of weight functions
	\begin{align*}
		\phi_i= \left( e^2 + \frac{\diam (\O)  }{\alpha_1|v|} - \tilde \tau (x, -v)\right)^i \log \left( e^2 + \frac{\diam (\O)}{\alpha_1|v|} - \tilde \tau (x, -v)\right)^{-1.6 \frac{d}{d+1}},
	\end{align*} for $i \in \llbracket d-1, d+1 \rrbracket$ where $ d \in \{2,3\}$ and define also $\phi_0$ such that $1 \leq \phi_0 \leq \phi_1 \leq \phi_{d+1}$. 
	
	To verify the \ref{Minorisation2} we show that there exists $\tilde R>0$ such that for all $R > \tilde R$, there exist some time $\tau(R)>0$ (depending on $R$) and a nonnegative measure $\eta (z)\in \mc M (\O\times \mc V)$ such that for all $z \in \O \times \mc V$
	\begin{align} \label{Doeblin_B1}
		S_\tau f_0 (z):= f_\tau (z)\geq \eta (z) \int_{\{\tilde \tau (z') \leq R\}} f_0 (z')   \d z' .
	\end{align} This bound is proved by tracking the characteristics of the transport equation similar to the previous sections.
	
	For the weak Foster-Lyapunov condition, we first prove that for all $\tau>0$ and $f_0 \in \mc P_{\phi_{d+1}} (\O\times \mc V)$
	\begin{align} \label{Lyap_subgeo}
		\|f_\tau\|_{\phi_{d+1}} + C \int_0^\tau \|f_s\|_{\phi_{d}}  \d s \leq \|f_0\|_{\phi_{d+1}} + \tilde C (1+\tau) \|f_0\|_1,
	\end{align} by using the fact that $v \cdot \nabla_x \tilde \tau (z) = -1$ for all $z \in \O \times \mc V$. Then by using \eqref{Doeblin_B1} we obtain
	\begin{align*}
		|||	S_\tau f_0 |||_{\phi_{d+1}} \leq |||f _0 |||_{\phi_{d+1}},
	\end{align*} where for a fixed $\tau >0$ large enough $ |||\cdot |||_{\phi_{d+1}} := \|\cdot \|_1 + \gamma_1 (\tau )\|\cdot \|_{\phi_{d+1}}  + \gamma_2 (\tau)\|\cdot\|_{\phi_{d}} $ for some $\gamma_1, \gamma_2 >0$ depending on $\tau$. 
	Using this repeatedly together with the inequality $|||S_\tau f_0|||_{\phi_1}  + \gamma_3 \|f_0\|_{\phi_0}  \leq |||f_0|||_{\phi_1}$ for a well-chosen $\gamma_3 >0$ and subtracting $f_\infty$ from both sides we obtain \eqref{eq:subgeo1}. We would like to remark that, the version of the \ref{Foster_Lyap2} used in this section is rather an integrated version of the condition of Theorem \ref{thm:subgeoHarris}.

	\paragraph{\textit{R2.}} The proof is similar to the case of \textbf{\em R1} except for two key points. We remark here only the differences and we refer to \cite{B21} for the details. 
	
	Similar to \eqref{Lyap_subgeo} we would like to obtain
	\begin{align*} 
		\|f_\tau\|_{\phi_{d+1-\ve}} + C \int_0^\tau \|f_s\|_{\phi_{d-\ve}}  \d s \leq \|f_0\|_{\phi_{d+1-\ve}} + \tilde C (1+\tau) \|f_0\|_1,
	\end{align*} for $\ve \in (0,1/2)$. 
	In the first regime \textbf{\em R1}, for the diffuse part of the Maxwell boundary conditions, the outgoing velocities are independent from the incoming velocities, thus the flux can be controlled uniformly. For the Cercignani-Lampis boundary conditions, outgoing velocities depend on the incoming ones, this results in an integration only on some part of the boundary. The second point is that due to the hypotheses made on $r_\perp$ and $r_\parallel$, the norm of the velocity decreases on average at each collision with the boundary. These lead to more complicated computations for the \ref{Minorisation2}.
	
	The \ref{Foster_Lyap2} is verified in a similar strategy to the regime \textbf{\em R1}.
	
	We conclude by noting that in the regime \textbf{\em R2}, the rate of the convergence is optimal.
\end{proof}

\begin{remark}
	In \cite{B20}, the author compares the result of \textbf{ \em R1} with the same initial value problem complemented with the absorbing boundary conditions, i.e. $\mc R_{\gamma_+} = 0$ in \eqref{general_boundary}. Notice that in this case the mass is not conserved and the equilibrium distribution is $f_\infty (z)=0$. The following convergence rates verify the Inequality \eqref{eq:subgeo1}:
	\begin{align*}
		\beta(t,d) &= \beta (t) = e^{-t}  \mbox{ when } \phi_d = e^{\tilde \tau (z)}, \\
		\beta(t,d) &= \beta (t) = (1+t)^{-\xi} , \mbox{ when } \phi_d =  (1 + \tilde \tau (z))^\xi,\, \xi >1 .
	\end{align*}
	Moreover, for $f_0 \in \mc P (\O\times \mc V)$ 
	satisfying $f_0 \1_{\{|v| \leq \delta \}} = 0$ for some $\delta >0$, the convergence rate is exponential in the case with absorbing boundary conditions, whereas the rate is of order $d+1$ in the case with Maxwell boundary conditions.
	
\end{remark}

\subsubsection{The nonlinear BGK equation on an interval}
\label{sec:nonlin_BGK}

In this section, we will look at a nonlinear BGK equation on an interval in dimension $d=1$ with diffuse boundary conditions, i.e. we couple the system with two heat reservoirs at different temperatures. 
We consider \eqref{eq:kinetic} in the phase space $\O \times \mc V = (0,1) \times \R$ with
\begin{align} \label{eq:nonlinBGK}
	\mc T [f] : = v \cdot \p_x f , \quad \mbox{and} \quad  \mc C[f] : = \frac{1}{\kappa} (\rho_f M_{T_f}- f), 
\end{align} where the density $\rho_f$ and the pressure $P_f$ are given by
\begin{align*}
	\rho_f (x) := \int_\R f \d v \quad \text{and} \quad P_f(x):=\rho_f(x) T_f(x) := \int_\R v^2 f \d v,
\end{align*} and $M_{T_f}$ is the Maxwellian velocity distribution with the temperature $T_f$, $\kappa>0$ is the Knudsen number. Equation \eqref{eq:kinetic}-\eqref{eq:nonlinBGK} is complemented with the diffuse boundary conditions, for $i \in \{0,1\}$
\begin{align} \label{eq"ninlinBGK2}
	f(t, i, v) =  \tilde R_i M_i(v), \quad 	M_i =  \frac{1}{T_i } e^{-\frac{v^2}{2T_i}}
\end{align} where $M_i$ are the normalised Maxwellian velocity distributions at the boundaries, i.e. $\int_0^\infty v M_i(v) \d v =1$. Note that at $x= 1$, we consider $\tilde R_1 := \tilde R_+$ as defined in \eqref{diffuse_boundary} and at $x=0$, we take $\tilde R_0 := \tilde R_+$ which is defined similarly to \eqref{diffuse_boundary} where the integral is taken on the set $\{ v <0\}$ instead of $\{ v >0\}$, i.e. $n_x=1$ in \eqref{diffuse_boundary}. 

In \cite{EM21}, the authors studied this model and they showed the existence of non-equilibrium steady states induced by the boundary conditions by using Doeblin's theorem. Some previous works on this equation can be found in \cite{CELMM18,CELMM20} and references therein. 

We also mention a numerical study \cite{KKSRD11} where authors studied the stationary states of a rarefied gas between parallel plates with sinusoidal temperature distribution. They considered two types of boundary conditions including the Cercignani-Lampis boundary conditions and they provided a numerical study on the non-equilibrium steady states. Particularly, their experiments show that the steady states differ drastically depending on the accommodation coefficients in the boundary conditions and the temperature distribution.

The main result of \cite{EM21} is given by

\begin{theorem} \label{thm:nonlin_BGK}
	Suppose that $t \mapsto f_t$ is the solution of the nonlinear BGK equation \eqref{eq:kinetic}-\eqref{eq:nonlinBGK} in the phase space $\O\times \mc V = (0,1) \times \mc \R$ with the initial data  $f_0 \in \mc P ((0,1) \times \mc \R)$.  Then for every fixed two temperatures $T_1, T_2$ satisfying
	\begin{itemize}
		\item[\textbf{A1.}]  $T_1 > \alpha_1/\kappa^2$, 
		\item[\textbf{A2.}]  $\sqrt{T_2} - \sqrt{T_1} \geq \alpha_2 \sqrt{\kappa} T_2^{1/4}$
	\end{itemize} where $\alpha_1, \alpha_2 >0$ are two positive constants and $\kappa>0$ is the Knudsen number given in the collision operator \eqref{eq:nonlinBGK}, there exists a nontrivial stationary state $f_\infty \geq 0$ of \eqref{eq:kinetic}-\eqref{eq:nonlinBGK} and it has the following properties
	\begin{itemize}
		\item [\textbf{P1.}] The sationary state has a zero momentum uniformly in $x\in (0,1)$, i.e.  
		\begin{align*}
			u_\infty(x) = \frac{1}{\rho_f(x)}\int_\R v f_\infty \d v =0.
		\end{align*}
		\item [\textbf{P2.}] The density of the stationary state is constant, i.e. $\rho_f (x)= \rho_f$, and it satisfies for all $x \in (0,1)$, that
		\begin{align*}
			1- C_1\sqrt{\kappa} T_1^{-1/4} \leq \rho_f(x) \leq 1+ C_2 \sqrt{\kappa} T_1^{-1/4}.
		\end{align*} 
	\item [\textbf{P3.}] The staionary state has a constant pressure, i.e. $ P_f(x)= P_f$, and it satisfies for all $x \in (0,1)$, that
	\begin{align*}
	C_3 \sqrt{T_1T_2} \leq P_f(x) \leq C_4 \sqrt{T_1T_2}.
	\end{align*}
		\item [\textbf{P4.}] The temperature of the steady state $T_f$ satisfies for all $x \in (0,1)$, that
		\begin{align*}
			C_5 \sqrt{T_1T_2} (1- C_2 \sqrt{\kappa} T_1^{-1/4}) \leq T_f(x) \leq C_6\sqrt{T_1T_2} (1+  C_1 \sqrt{\kappa} T_1^{-1/4}),
		\end{align*} 
	\end{itemize} where $C_1, C_2, C_3, C_4, C_5, C_6$ are positive constants. 
\end{theorem}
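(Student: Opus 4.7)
The plan is to combine Doeblin's theorem (Theorem \ref{thm:Doeblin}) with a Schauder-type fixed point argument to produce the nontrivial stationary solution, and then to read off the properties \textbf{P1}--\textbf{P4} from the stationary moment identities. The nonlinearity in $\mc C[f]$ enters only through the macroscopic profile $(\rho_f, T_f)$, so the natural strategy is to freeze these profiles and decouple the problem into a linear transport–relaxation equation plus a self-consistency condition.

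First, given a Maxwellian target profile $M^{\rho,T}(x,v) := \rho(x) M_{T(x)}(v)$ with $(\rho,T)$ in a suitable class (positive, continuous, with $T$ bounded above and away from zero), I would consider the linear equation
\[
\partial_t g + v\, \partial_x g = \tfrac{1}{\kappa}\bigl( M^{\rho,T} - g \bigr),
\]
on $(0,1)\times\R$ together with the diffuse boundary conditions at $x=0,1$. This defines a mass- and positivity-preserving Markov semigroup on $\mc P((0,1)\times\R)$. To verify \eqref{Doeblin}, I would track characteristics: any particle either undergoes a relaxation event (with exponential rate $1/\kappa$) and is re-emitted with density $M^{\rho,T}$, or else reaches the boundary within time $O(1)$ and is re-injected with distribution $M_i(v)$. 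Both mechanisms are strongly thermalising; composing them over a sufficiently large time $\tau$ produces a uniform lower bound $S_\tau^{\rho,T}\mu \geq \alpha\, \eta_{\rho,T}$ for every $\mu\in\mc P$, and Theorem \ref{thm:Doeblin} then yields a unique stationary state $g_\infty[\rho,T]$ depending continuously on $(\rho,T)$.

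Next, I would define the solution map $\Psi:(\rho,T)\mapsto(\rho',T')$, where $\rho'(x):=\int_\R g_\infty[\rho,T]\, dv$ and $T'(x):=\rho'(x)^{-1}\int_\R v^2 g_\infty[\rho,T]\, dv$, on a closed convex subset $\mc K\subset C([0,1])^2$ encoding the a priori bounds suggested by \textbf{P2}--\textbf{P4}. The explicit Duhamel representation of $g_\infty$ and the smoothing effect of integration against $M_{T(x)}$ should deliver continuity and compactness of $\Psi$ into $\mc K$; Schauder's theorem then produces a fixed point $(\rho_\infty,T_\infty)$, and setting $f_\infty:=g_\infty[\rho_\infty,T_\infty]$ gives the desired nonlinear stationary state. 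The properties \textbf{P1} and \textbf{P3} then drop out of stationary moment identities on $v\,\partial_x f_\infty = \tfrac{1}{\kappa}(\rho_\infty M_{T_\infty}-f_\infty)$: integrating against $1$ gives $\partial_x(\rho_\infty u_\infty)=0$ and the diffuse boundary conditions impose zero net flux at $x=0,1$, hence $u_\infty\equiv 0$; integrating against $v$ then gives $\partial_x P_\infty = -\tfrac{1}{\kappa}\rho_\infty u_\infty = 0$, so $P_\infty$ is constant.

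The quantitative bounds in \textbf{P2} and \textbf{P4} (and the explicit constants multiplying $\sqrt{T_1T_2}$ in \textbf{P3}) I would extract by solving the stationary ODE-in-$x$ for each fixed $v\neq 0$, substituting the diffuse boundary data at $x=0,1$, and performing a boundary-layer expansion in the Knudsen number $\kappa$; the hypotheses \textbf{A1} and \textbf{A2} appear to be calibrated exactly so that the $O(\sqrt{\kappa}\, T_1^{-1/4})$ corrections coming from the two Knudsen layers remain strictly smaller than the leading $\sqrt{T_1T_2}$ matching term. The main obstacle will be the simultaneous closure: the set $\mc K$ must be invariant under $\Psi$, which requires the quantitative bounds \textbf{P2}--\textbf{P4} to hold for every output $g_\infty[\rho,T]$ whenever $(\rho,T)\in\mc K$, and this invariance forces a delicate tracking of how the Doeblin constants and the boundary-layer estimates depend on the input profile $(\rho,T)$ and on the two reservoir temperatures $T_1,T_2$.
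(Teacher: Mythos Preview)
Your proposal follows essentially the same two-stage strategy as the paper: verify \ref{Doeblin} for the linear equation with a frozen Maxwellian profile (by tracking characteristics and the thermalising effect of the diffuse boundaries), then close the loop via Schauder's fixed point theorem. The paper's proof differs only in that its fixed point map is taken in the single variable $T(\cdot)$ rather than in the pair $(\rho,T)$: given a temperature profile $T\in C((0,1))$ with $T_1\le T(x)\le T_2$, the linear stationary state $g_\infty[T]$ already determines its own density, so one sets $\mc F(T)(x):=\tilde T_T(x)=\int v^2 g_\infty[T]\,dv\big/\int g_\infty[T]\,dv$ and shows that $\mc F$ maps the order interval $[T_1,T_2]$ into itself and that $\tilde T_T$ is $1/2$-H\"older, which supplies the compactness for Schauder. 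Your two-variable map $(\rho,T)\mapsto(\rho',T')$ would work as well, but carrying $\rho$ along is redundant and makes the invariance of the convex set $\mc K$ slightly harder to check; the paper's reduction to the temperature alone is a worthwhile simplification. Your derivation of \textbf{P1} and \textbf{P3} from the $1$- and $v$-moments of the stationary equation is correct and is implicit in the paper's reference to \cite{EM21}.
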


\begin{proof}[Idea of the proof of Theorem \ref{thm:nonlin_BGK}]
	We first look at the linear equation $\p_t f + v \p_x f = \rho(x)M_{T(x)} -f$ with the same diffuse boundary conditions for a given temperature profile $T(x)$ and show that this equation has a unique equilibrium state. This part uses Doeblin's theorem as the existence of a unique equilibrium is a by-product of the theorem.
	
	By Duhamel's formula we have for $x-vt \in (0,1)$ that
	\begin{align} \label{Duhamel1_nonlin}
		f_t (x,v)= e^{-t}f_0 (x-vt, v) + \int_0^t e^{-(t-s)} \rho (x-v(t-s))M_{T(x-v(t-s))} (v) \d s,
	\end{align} and similarly we obtain two more expressions at the boundaries considering $x-vt \leq 0$ at $x=0$ and $x-vt \geq 1$ at $x=1$. Combining these and defining $\mc C^+ [f] := \rho_f(x)M_{T(x)}(v)$ we obtain
	\begin{align} \label{eq:rev1}
		f_t (x,v)\geq \int_0^F e^{-F(t,x,v)} (\mc C^+ f) (s, x-v(F(t,x,v)-s),v) \d s,
	\end{align} where $F$ is defined as
	\begin{align*}
		F(t,x,v) = \begin{cases}
			x/v, \quad &\mbox{for  } x/v \leq t, \\
			t, \quad &\mbox{for  } x/v >0,\, v >0 \\
			t, \quad &\mbox{for  } v=0\\
			t, \quad &\mbox{for  } (1-x)/|v| >t,\, v <0 \\
			(1-x)/|v|\quad &\mbox{for  } -(1-x)/v \leq t.\\
		\end{cases}
	\end{align*} 
	Then we estimate $f_t$ at the boundaries by using the first expression on the right hand side of  \eqref{Duhamel1_nonlin} and using this we look at the local density $\rho$ for $v_0<0$ and $v_0>0$. We have that $\rho (t,x) = \delta_{(x_0 + v_0t)} (x)$ in the case where $x_0 + v_0t \in (0,1)$. We consider $x-vt \in (0,1)$ and since
	\begin{align*}
		M_{T(x)}(v) \geq \frac{1}{\sqrt{2\pi T_2}} e^{-\frac{v^2}{2T_1}} \geq \sqrt{\frac{T_1}{T_2}}M_{T_1} (v):= \gamma G(v),
	\end{align*} we can bound \eqref{Duhamel1_nonlin} as
	\begin{align} \label{Duhamel2_nonlin}
		f_t (x,v)\geq \gamma G(v) \int_0^t e^{-(t-s)} \rho (x-v(t-s)) \d s.
	\end{align} Now we consider \eqref{Duhamel2_nonlin} in three different cases for an initial data $f_0(x,v) = \delta_{(x_0,v_0)}(x,v)$,
	\begin{align*}
		\begin{cases}
			v_0 <0, \quad x_0 \leq \ve |v_0|, \\
			v_0 >0, \quad(1-x_0)\leq \ve v_0 , \\
			\mbox{otherwise}.
		\end{cases}
	\end{align*} We first estimate $\rho(t,x)$ in these three cases separately and then substitute these estimates in \eqref{eq:rev1}. Each regime involves some computations which can be found in the proof of Lemma 13 in \cite{EM21}. Then we obtain for a fixed $\ve>0$, setting $\tau= 2\ve$ that 
	\begin{align*}
		S_\tau f_0 := f_\tau \geq \beta \1_{\{x-2v\ve \in (\ve,1-\ve)\}},
	\end{align*} where 
	\begin{align*}
		\beta = \gamma e^{-2 \ve} \min \left\{ \frac{\gamma\ve}{2} G \left  (\frac{2}{\ve} \right ), \frac{1}{T_1} e^{-\frac{1}{2T_1\ve^2}}, \frac{1}{T_2}e^{-\frac{1}{2T_2\ve^2}}\right\}.
	\end{align*} Thus we verified \ref{Doeblin} and by Doeblin's theorem we obtain a unique stationary state for the linear equation. 
	
	The next step is to prove that this equilibrium state is also a stationary solution of the nonlinear equation. This is done by using Schauder's fixed point theorem. We define a map $\mc F : C((0,1)) \to C ((0,1))$ given by 
	\begin{align*}
		\mc{ F }(T)(x) = \frac{\int v^2 f \d v}{\int f \d v} = \tilde T_T (x),
	\end{align*}
	which is a map between continuous functions on $(0,1)$ and $\tilde T_T$ is the temperature profile of the solution of the linear BGK equation with initial temperature profile $T$. Using the fact that, 
	\begin{align*}
		\mbox{``If $T_1 \leq T(x) \leq T_2$ for $T_1,T_2$ satisfying \textbf{\em A2}, then $T_1 \leq \tilde T_T (x) \leq T_2$ and $\tilde T_T$ is $1/2-$H\"{o}lder continuous''}
	\end{align*} we show that $\mc F$ has a fixed point. This means that there exists a temperature profile $T_0$ such that $\tilde T_{T_0} (x) = T_0 (x)$ for all $x$, thus the equilibrium states, of the linear and the nonlinear models will be the same. For the details we refer to \cite{EM21}.
\end{proof}

\subsubsection{The linear Boltzmann equation}

\label{sec:lin_Boltzmann}

In this section, we are interested in the linear Boltzmann equation where the collision operator $\mc C$ in \eqref{eq:kinetic} is given by
\begin{align} \label{eq:linearBoltzmann}
	\mc C [f] = Q (f, M) :=  \int_{\R^d} \int_{\S^{d-1}} B (|v-v_*|, \sigma) (f'M(v_*') - fM(v_*)) \d \sigma \d v_*, 
\end{align} and $M$ is the Maxwellian velocity distribution given by \eqref{Maxwell}, $B$ is the Boltzmann collision kernel given by \eqref{eq:Boltzmannkernel} with $\gamma \geq 0$ and $\sigma \in \S^{d-1}$.
The linear Boltzmann equation describes the interaction between gas particles scattering with the background medium which is considered to be already in equilibrium. There is no self-interaction between the particles. It is used as a model for many systems like radiative transfer, cometary flow, dust particles and neutron transport, see e.g.  Chapter 1 of \cite{V02} and references therein. 
There are several ways of parametrising the pre-and post-collisional velocities and depending on which parametrisation we consider we can define the collision operator $\mc C$ differently. We will give some of them below since different representations will be used in different parts of the proofs later. More details on these representations can be found in e.g. \cite{V02}.

\begin{enumerate}[label=(\roman*)]
	\item \textbf{$\sigma-$representation.} Under the Grad's angular cut-off assumption, we can write the collision operator $\mc C$ as the following:
	\begin{align*}
		\mc C [f] := \mc C^+[f] -  \Lambda_\gamma (v) f,
	\end{align*} where $\mc C^+$ is given by 
	\begin{align*}
		\mc C^+ [f] := \int_{\R^d} \int_{\S^{d-1}} |v-v_*|^\gamma b(\cos \theta )  f' M (v_*')\d \sigma \d v_*,
	\end{align*} and the collision frequency $\Lambda_\gamma$ is given by 
	\begin{align*}
		\Lambda_\gamma (v): =  
		\int_{\R^d} \int_{\S^{d-1}} |v-v_*|^\gamma b(\cos \theta ) \d \sigma \d v_* = \int_{\R^d} |v-v_*|^\gamma M(v_*) \d v_*,
	\end{align*} with $\Lambda_\gamma (v ) \geq 0$ and it satisfies, for all $v \in \R^d$, that
	\begin{align*}
		0 \leq \Lambda_\gamma (v ) \leq (1+ |v|^2)^{\frac{\gamma}{2}}. 
	\end{align*}
	\item \textbf{Carleman representation.} Another representation is due to Carleman and it consists in choosing the pre-collisional velocities as new variables $v'$ and $v_*'$ from the set of admissible velocities. 
	This means 
	\begin{align*}
		\mc C^+ [f] &= \int_{\R^d} \frac{f'}{|v-v'|^{d-1}} \int_{E_{(v,v')}} B (|u|, r) M (v_*') \d v_*' \d v' 
		\\ &= \int_{\R^d} \frac{f'}{|v-v'|} \int_{E_{(v,v')}} |2v - v' - v_*'|^{\gamma - d -2} M (v_*') \d v_*' \d v',
	\end{align*} where the hyperplane $E_{(v, v')} : = \{v_*' \in \R^d \mid (v-v') \cdot (v-v_*') = 0\}$ is the set of admissible velocities. The second line comes from the identity $u := v- v_* = 2v - v' - v_*'$ and the assumptions on $B$ which essentially mean that $B$ can be written as $B(|u|, r ) = C |u|^\gamma r^{d-2}$ where $r = |v-v'|/|u|$. 
	\item \textbf{$\omega-$representation.} We sometimes use the so-called $\omega-$representation for the collisions, which is given by 
	\begin{align*}
		v' = v - (u \cdot \omega)\omega ,  \quad \mbox{and} \quad  	v_*' = v_* - (u \cdot \omega) \omega. 
	\end{align*} Then we have the following relation: $ \sigma = u/ |u| - 2 (u/ |u| \cdot \omega) \omega$. We re-write the Boltzmann collision kernel with this change of variables 
	\begin{align*}
		\tilde B (|v-v_*|, | \tilde r |) = |v- v_*|^\gamma \tilde b (|\tilde r |), \quad \tilde r = u/|u| \cdot \omega. 
	\end{align*} Note that $\tilde B$ and $\tilde b$ are different than $B$ and $b$ in the $\sigma-$representation. 
\end{enumerate}

Concerning the long-time behaviour of the linear Boltzmann equation, convergence to equilibrium in weighted $L^2$ spaces can be shown following \cite{DMS15, MN06}. A recent quantitative result which uses Harris-type theorems is \cite{CCEY20}. The main result of \cite{CCEY20} is given by
\begin{theorem}
	Suppose that $t \mapsto f_t$ is the solution of the linear Boltzmann equation \eqref{eq:kinetic}-\eqref{eq:linearBoltzmann} with the initial data $f_0 \in \mc P (\O \times \mc V)$. We consider the Boltzmann collision kernel with \eqref{eq:Boltzmannkernel} for some $\gamma \geq 0$, and assume that $b$ is integrable and uniformly positive on $[-1,1]$, i.e. there exists a positive constant $C_b$ such that $b (r) \geq C_b$ for all $r\in [-1,1]$. 
	We consider either of the three regimes below:
	\begin{itemize}
		\item[\textbf{R1.}] Suppose that $\O\times \mc V = \T^d \times \R^d$ (no confining potential, i.e. $\Phi =0$).
		\item[\textbf{R2.}] Suppose that $\O\times \mc V = \R^d \times \R^d$ and that the confining potential $\Phi \in C^2 (\R^d)$ satisfies that
		\begin{align}
			x \cdot \nabla_x \Phi \geq \alpha \langle x \rangle^{\gamma+2}  +\beta \Phi (x) - \eta,
		\end{align} for some positive constants $\alpha, \beta, \eta$.
	\end{itemize} 
	Then there exist positive constants $C, \lambda$ which are independent from $f_0$, such that 
	\begin{align}
		\|f_t - f_\infty\|_\phi \leq C e^{-\lambda t} \|f_0 - f_\infty\|_\phi,
	\end{align} where $f_\infty (x,v)= M(v) e^{-\Phi(x)}$ is the only equilibrium state and the
	weight function $\phi= 1+ \varphi$ where $\varphi \in \{\varphi_1, \varphi_2\}$ is given by 
	\begin{align}
		\varphi_{1}(v) &= H(x,v), \tag{for \textbf{R1}} \\
		\varphi_{2}(x,v) &= H(x,v) +|x|^2, \tag{for \textbf{R2}}
	\end{align} where $H(x,v) := \Phi(x) + |v|^2/2$ is the total energy of the system.
	\begin{itemize}
		\item[\textbf{R3.}] Suppose that $\O\times \mc V = \R^d \times \R^d$ and that the confining potential $\Phi \in C^2 (\R^d)$ satisfies, for $\xi \in (0,1)$ that
		\begin{align*}
			x \cdot \nabla_x \Phi \geq \alpha \langle x \rangle^{1+\xi} + \beta \Phi (x) - \eta \quad \mbox{and} \quad \Phi \leq \delta \langle x \rangle^{1+\xi}.
		\end{align*} for some positive constants $\alpha, \beta, \eta, \delta $.
	\end{itemize}
	Then there exists a positive constant $C$ such that 
	\begin{align}
		\|f_t - f_\infty\|_1 \leq \min \left\{ \|f_t - f_\infty\|_1, C (1+t)^{-\xi} \|f_0\|_\phi \right\},
	\end{align} where $f_\infty (x,v)= M(v) e^{-\Phi(x)}$ is the only equilibrium state and $\phi = 1 + H(x,v) + |x|$.
\end{theorem}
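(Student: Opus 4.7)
The plan is to mirror the strategy used for the linear BGK equation in Theorem 3.1, adapting the three cases to the Boltzmann collision operator \eqref{eq:linearBoltzmann} via its decomposition $\mc C[f] = \mc C^+[f] - \Lambda_\gamma(v) f$. By iterating Duhamel's formula against the collision frequency, I obtain a lower bound of the form
\begin{align*}
f_\tau(z) \;\geq\; e^{-\|\Lambda_\gamma\|_{L^\infty(\mathcal{K})}\tau}\int_0^\tau\!\!\int_0^s \mc T_{\tau-s}\,\mc C^+\,\mc T_{s-r}\,\mc C^+\,\mc T_r\, f_0\,\mathrm{d}r\,\mathrm{d}s
\end{align*}
valid on any compact set $\mathcal{K}$ in velocity space where $\Lambda_\gamma$ is bounded. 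This reduces the minorisation question to showing that two applications of $\mc C^+$ interleaved with transport spread an arbitrary probability measure into a fixed absolutely continuous measure supported in a ball. The crucial ingredient here is the Carleman representation: given a Dirac initial datum $\delta_{(x_0,v_0)}$, one application of $\mc C^+$ produces a measure which, after integration over the hyperplane $E_{(v,v')}$, is bounded below by $c\, M(v_*')/|v-v'|^{d-1}$ times an indicator of a suitable velocity ball. Combined with the uniform positivity hypothesis $b \geq C_b$, a second application fills in the missing directions, so that two post-collisional states produce a uniformly positive density on any fixed compact set in $v$.

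For regime \textbf{R1}, the phase space is $\T^d \times \mathbb{R}^d$ with no potential, so the transport semigroup $\mc T_t$ is the rigid translation $(x,v)\mapsto (x+tv,v)$. Using the lower bound obtained above, the transport allows any point of $\T^d$ to be reached in time $\tau$ provided the velocity lies in a sufficiently large ball, yielding \ref{Doeblin} directly and Doeblin's theorem (Theorem \ref{thm:Doeblin}) then gives exponential convergence. The Foster-Lyapunov role is played trivially by $\phi_1 = 1 + H(x,v)$ whose $\mc L^*$-image can be computed using the conservation of $|v|^2/2$ on collisions up to a relaxation term toward the Maxwellian, yielding $\mc L^* \phi_1 \leq -\zeta \phi_1 + D$ on the whole space.

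For \textbf{R2}, I would verify \ref{Foster_Lyap} for $\phi_2 = 1 + H(x,v) + |x|^2$. A direct computation gives
\begin{align*}
\mc L^* \phi_2 \;=\; v\cdot\nabla_x \phi_2 - \nabla_x\Phi\cdot\nabla_v\phi_2 + \int (|v'|^2-|v|^2)/2\cdot M(v_*)\cdots
\end{align*}
and the cross term $2x\cdot v$ is controlled via the dissipative contribution from the collision operator together with the hypothesis $x\cdot\nabla_x\Phi \geq \alpha\langle x\rangle^{\gamma+2}+\beta\Phi-\eta$; the factor $\langle x\rangle^{\gamma+2}$ is precisely tuned to beat the worst case of the quadratic $|x|^2$ term after using Young's inequality and the bound $|\Lambda_\gamma(v)|\lesssim \langle v\rangle^\gamma$. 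For the minorisation on the sublevel set $\{\phi_2 \leq R\}$, I would follow the BGK argument by approximating the characteristics of the transport with potential by the free-transport characteristics via the implicit function theorem applied to the rescaled flow \eqref{eq:ODE2}, so that iterated applications of $\mc C^+$ (Carleman representation) together with the flow fill in a product of balls in $(x,v)$. Harris's theorem (Theorem \ref{thm:harris}) then closes the argument.

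For \textbf{R3}, the subgeometric weight $\phi = 1 + H(x,v) + |x|$ should satisfy $\mc L^* \phi \leq -\zeta V(\phi) + D$ with $V(u) = u^{2\xi/(1+\xi)}$ (or a comparable concave profile), giving via the calculation $G_V^{-1}(t) \sim t^{1/(1-\xi)}$ and $V\circ G_V^{-1}(t)\sim t^{\xi}$ the announced rate $(1+t)^{-\xi}$ through Theorem \ref{thm:subgeoHarris}. The minorisation condition on arbitrary compact sublevel sets is verified identically to \textbf{R2}. The main obstacle I expect is the bookkeeping in the Foster-Lyapunov estimate in \textbf{R2} and \textbf{R3}: unlike the BGK case where the collision only changes $v$ by projection onto the Maxwellian, here the velocity change through one collision is spread by the kernel $B$ and one must carefully balance the gain of $|v'|^2$ against the loss of $|v|^2$ weighted by $\Lambda_\gamma(v) \sim \langle v\rangle^\gamma$; the hypothesis $x\cdot\nabla_x\Phi \geq \alpha\langle x\rangle^{\gamma+2}+\cdots$ in \textbf{R2} is exactly what is required so that the confinement dominates the growth of the collision frequency at infinity.
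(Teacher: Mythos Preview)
Your overall strategy matches the paper's, but there are two genuine gaps in the execution.

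\textbf{Regime R1.} You claim that Doeblin's condition holds globally on $\T^d\times\R^d$, but this fails precisely because $\Lambda_\gamma(v)\sim\langle v\rangle^\gamma$ is unbounded when $\gamma>0$: your own Duhamel bound carries the factor $e^{-\|\Lambda_\gamma\|_{L^\infty(\mathcal K)}\tau}$, which degenerates as the velocity compact $\mathcal K$ grows. The paper is explicit about this: unlike the BGK case, one only obtains the minorisation on balls $\{|v|\le R\}$, and Harris's theorem (not Doeblin's) is needed, with the Lyapunov function $\phi(v)=|v|^2$ verified via the $\omega$-representation of the collision operator. Your sentence ``yielding \ref{Doeblin} directly'' followed by a separate Foster--Lyapunov claim is internally inconsistent and should be replaced by a Harris argument throughout.

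\textbf{Regimes R2 and R3.} The Lyapunov functionals you propose, $\phi_2=1+H+|x|^2$ and $\phi=1+H+|x|$, are the weights appearing in the \emph{statement}, but they are not the functionals on which one can close \eqref{eq:FL3}. Applying $\mc L^*$ to $|x|^2$ produces $2x\cdot v$, which has no sign, and the collision operator acts only in $v$ so it cannot absorb this term. The paper resolves this by the standard hypocoercive twist: one works instead with $\phi=H+\alpha\,x\cdot v+\beta|x|^2$ in R2 and $\phi=H+\alpha\,\dfrac{x\cdot v}{\langle x\rangle}+\beta\langle x\rangle$ in R3. The point is that $\mc T^*(x\cdot v)=-x\cdot\nabla_x\Phi+|v|^2$, so the potential hypothesis $x\cdot\nabla_x\Phi\ge\alpha\langle x\rangle^{\gamma+2}+\beta\Phi-\eta$ now enters and supplies the missing confinement in $x$; the collision operator then controls the $v$-moments via $\mc C^*(|v|^2)\le -\alpha_1\langle v\rangle^{\gamma+2}+\alpha_2$ and $\mc C^*(x\cdot v)\lesssim\langle v\rangle^{\gamma+1}|x|$. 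Since $|x\cdot v|$ is dominated by $|x|^2+|v|^2$, the twisted functional is equivalent to the one in the statement, but you cannot skip the cross term in the proof. In R3 the same mechanism gives $\mc L^*\phi\le -\zeta V(\phi)+D$ with $V(s)=1+s^{\xi/(1+\xi)}$ (not $s^{2\xi/(1+\xi)}$ as you wrote), which is what produces the rate $(1+t)^{-\xi}$.
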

\begin{proof}
	We start with an initial data $f_0 (x,v)= \delta_{(x_0,v_0)}(x,v)$ satisfying $H(x_0, v_0) \leq E_0$ ($H$ is the total energy or Hamiltonian) for some $E_0 >0$ then we can find a constant $C>0$ such that (similar to \eqref{firstDuhamel})
	\begin{align} \label{Duhamel2}
		f_t \geq e^{-Ct}\int_0^t \int_0^s \mc T_{t-s}  \mc  {\tilde C}^+ \mc T_{s-r} \mc  {\tilde C}^+ \mc T_r (\1_E f_0 ) \d r \d s,
	\end{align} where ${\mc{\tilde C}}^+  \nu := \1_E \mc C^+ \nu$ for all $ \nu \in \mc P $ and the set $E : = \{ (x,v) \in \O \times \mc V \mid H(x,v) \leq E_0\}$. 
	
	For the bound on the action of $\mc C^+$, for any $\beta, \tilde \beta  >0$, we can find an $\varepsilon >0 $ such that for any probability measure $\nu \in \mc P (\T^d \times \R^d)$ and for any $v \in \R^d$ with $|v| \leq \tilde \beta $ the following holds:
	\begin{align} \label{ineq}
		\mc C^+ \nu (x,v) \geq \varepsilon \int_{B(0, \beta )} \nu (x,v') \d v'.
	\end{align}
	Now using these two arguments we give the idea of the proofs in three different regimes below. 
	\paragraph{\textbf{ \em R1.}} Unlike the linear BGK equation in the regime \textbf{\em R1}, here on the torus, we cannot get a uniform lower bound since the equation exhibits less global behaviour in $v$. However, combining the argument used in the proof of  \textbf{\em R1} for the linear BGK equation and \eqref{ineq}, we show that the \ref{Minorisation} holds on the bounded sets in $|v|$. 
	
	The \ref{Foster_Lyap} holds with $\phi (v) = |v|^2$. To see that we look at the actions of the operators $\mc T$ and $\mc C$ on $|v|^2$ together. Note that the transport part plays no role, i.e. $ \int \mc T [f] |v|^2 \d z =0$. For the collision part, we use the $\omega-$representation of the velocities and check the action of the operator with this formulation on $|v|^2$ and obtain the result.
	\paragraph{\textbf{ \em  R2.}} The \ref{Minorisation} in this case is very similar to the regime \textbf{\em  R2} of the linear BGK equation and it is obtained by using \eqref{Duhamel2}, \eqref{ineq} and \eqref{ineq3}. 
	
	For the \ref{Foster_Lyap} we look for a functional of the form (with $\alpha, \beta >0$)
	\begin{align*}
		\phi (x,v) = H(x,v) + \alpha x \cdot v + \beta |x|^2,
	\end{align*} and we want to show that for appropriate choices of $\alpha$ and $\beta$, $\phi$ satisfies \eqref{eq:FL3} for some $\zeta, D >0$. Note that $\mc L^* =\mc C^*- \mc T^*$ in this case (where the linear Boltzmann equation is written as $\p_t f = \mc L [f]$). We look at the actions of these operators individually on each term of $\phi$, i.e. we can show for appropriate positive constants $\alpha_1, \alpha_2$, that
	\begin{align*}
		\mc C^* (\Phi (x)) &= 0, \quad   \mc C^* (|v|^2) \leq - \alpha_1 \langle v \rangle^{\gamma+2} + \alpha_2, \quad  \mc C^* (x \cdot v) \leq   \langle v \rangle^{\gamma+1} x, \\
		\mc T^* (H(x,v)) &= 0, \quad  \mc T^* (|x|^2) = 2 x \cdot v, \qquad  \qquad \quad \mc T^*(x \cdot v) = -x \cdot \nabla_x \Phi (x) + |v|^2.
	\end{align*} Combining all these we see that $\phi$ satisfies \eqref{eq:FL3}. Then the result follows from Theorem \ref{thm:harris}.
	\paragraph{\textbf{ \em R3.}} The \ref{Minorisation2} is the same as \textbf{\em R2} above. 
	
	For the \ref{Foster_Lyap2} we have to pay more attention on how $\mc C$ acts on the $x \cdot v$ moment since the confining potential is not strong enough to verify the \ref{Foster_Lyap}. We can find $\alpha, \beta >0$ ,  $4\alpha^2 < \beta$ such that $(\mc C^*-\mc T^*) \phi \leq -\zeta V(\phi ) + D $ is satisfied with 
	\begin{align*}
		\phi (x,v) = H(x,v) + \alpha \frac{x \cdot v}{\langle x \rangle } + \beta \langle x \rangle, \quad \mbox{and} \quad V(s) = 1 + s^{\frac{\xi}{1+\xi}}.
	\end{align*} This verifies the \ref{Foster_Lyap}. Thus Theorem \ref{thm:subgeoHarris} gives the result. 
\end{proof}

\subsubsection{The linear degenerate Boltzmann equation}

\label{sec:deg_Boltzmann}
In this section, we consider the linear degenerate Boltzmann equation, i.e. the collision operator $\mc C$ in \eqref{eq:kinetic} takes the form 
\begin{align} \label{eq:degBoltzmann}
	\begin{split}
		\mc C [f] &= \sigma (x) \int_{\mc V} \left( B (v, v') f ' - B (v',v) f\right) \d v',
	\end{split}
\end{align} where $B$ is the scattering kernel and $\sigma \in L^\infty(\O)$ is a non-negative function. This equation is used as a model for a radiative transfer system where, depending on the scattering kernel $B$, the energy is distributed differently among different parts of the domain. The equilibrium state of \eqref{eq:kinetic} with \eqref{eq:degBoltzmann} is given by 
\begin{align} \label{eq:equilib_linBGK}
	f_\infty (x,v) = \frac{1}{Z} e^{-\Phi(x)},
\end{align} where $Z = |\mc V| \int_{\O} e^{-\Phi(x)} \d x \footnotemark$ if $B(v,v') = 1/ |\mc V|$ or $Z = M(v) \int_{\O} e^{-\Phi(x)} \d x$ where $M(v)$ is the normalised Maxwellian \eqref{Maxwell}. 
\footnotetext{The volume of the set $\mc V$ is denoted by $|\mc V|$.}

The so-called \emph{non-degenerate} case (where $\sigma >0$) is well-studied using different hypocoercivity tools. Notice that if we consider $\sigma =1$ and $B$ a diffusive scattering kernel given by a Maxwellian velocity distribution then we recover the linear relaxation Boltzmann (or the linear BGK) equation \eqref{eq:linBGK} and the convergence to equilibrium for this case was the subject of Section \ref{sec:linearBGK}. 

In the \emph{degenerate} case (where $\sigma \geq 0$) the convergence to equilibrium depends on the phase space $\O \times \mc V$ and on the geometry of the set $\{\sigma = 0\}$. This makes the study more delicate. It was shown that if the support of $\sigma$ satisfies a condition called \emph{Geometric Control Condition} (GCC), then it is possible to show exponential convergence to the equilibrium (see \cite{HKL15, BS13, BS13_2} and references therein for a more detailed discussion). The inspiration for using this type of condition in the context of the Boltzmann-type equations comes from earlier results in the study of controllability and stabilisation properties of hyperbolic PDEs, particularly the wave equation.

In this section, we look at the main result of \cite{EM19} where the authors gave the first quantitative exponential convergence rates by using Doeblin's theorem for the degenerate linear Boltzmann equation when $x \in \O = \T^d$. 

\begin{theorem} \label{thm:degenerateBolztmann}
	Suppose that $t \mapsto f_t$ is the solution of the linear degenerate Boltzmann equation \eqref{eq:kinetic}-\eqref{eq:degBoltzmann} with the initial data $f_0 \in \mc P (\T^d \times \mc V)$. We consider either of the two regimes below: 
	\begin{itemize}
		\item[\textbf{R1.}]  \label{DB1}
		Suppose that $\O \times \mc V = \T^d \times \mc V$ where $\mc V \subseteq \R^d$ is a bounded, open set, $\Phi =0$ and the following two conditions are satisfied:
		\begin{itemize}
			\item[\textbf{A1.}] There exists $\gamma >0$ such that 
			$B(v,v') \geq \dfrac{\gamma}{|\mc V|}$ for all $v,v' \in \mc V$.
			\item[\textbf{A2.}] Let $\sigma \in C^0 (\mathbb{T}^d)$ and there exist two constants $T = T(\sigma) >0$ (depending on $\sigma$), $\kappa >0$ such that 
			\begin{align*} \tag{GCC 1}
				\inf_{(x,v) \in \T^d \times \mc V} \int_0^T \sigma (x+vt) \d t \geq \kappa. 
			\end{align*}
		\end{itemize}
		\item[\textbf{R2.}]   \label{DB2}  
		Suppose that $\O \times \mc V = \T^d \times \R^d$, $\Phi \in C^2(\T^d)$ and the following two conditions are satisfied:
		\begin{itemize}
			\item [\textbf{A1.}] There exists a strictly positive decreasing function $b$ such that $B(v,v') \geq b (|v|)$ for all $v,v' \in \R^d$.
			\item [\textbf{A2.}] Let $\sigma \in C^0 (\mathbb{T}^d)$ and there exist two constants $T = T(\sigma, \Phi) >0$  (depending on $\sigma$ and $\Phi$), $\kappa >0$ such that 
			\begin{align} \label{GCC2}
				\tag{GCC 2}
				\inf_{(x,v) \in \T^d \times \R^d} \int_0^T \sigma (X_t(x,v)) \d t \geq \kappa,
			\end{align} where $(X_t(x, v), V_t(x,v))$ solves \eqref{eq:ODE1} for any $(x,v) \in \T^d \times \mc \R^d$.
		\end{itemize}
	\end{itemize}
	Then there exist positive constants $C, \lambda$ such that 
	\begin{align} \label{convDEG}
		\|f_t - f_\infty\|_{1} \leq C e^{-\lambda t} \|f_0 - f_\infty\|_{1},
	\end{align} where $f_\infty$ is the only equilibrium state given by \eqref{eq:equilib_linBGK}.
\end{theorem}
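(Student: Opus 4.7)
The plan is to apply Doeblin's theorem (Theorem~\ref{thm:Doeblin}) in both regimes, so the main task is to produce, for some $\tau > 0$ and $\alpha \in (0,1)$, a probability measure $\eta$ such that $S_\tau \mu \geq \alpha \eta$ uniformly in $\mu \in \mc P (\O \times \mc V)$. By positivity and linearity it suffices to verify this when $\mu = \delta_{(x_0, v_0)}$ and then to check that the resulting lower bound is uniform in $(x_0, v_0)$. Rewriting the equation as $\p_t f + \mc T f + \sigma(x)\Lambda(v) f = \sigma(x) \mc C^+[f]$ with $\mc C^+[f] := \int B(v, v') f(x, v') \d v'$ and $\Lambda(v) := \int B(v', v) \d v'$, and denoting by $\tilde S_t$ the semigroup generated by $-\mc T - \sigma \Lambda$, iterating Duhamel's formula twice yields
\begin{align*}
f_\tau \;\geq\; \int_0^\tau \!\! \int_0^s \tilde S_{\tau-s}\bigl( \sigma \mc C^+ \tilde S_{s-r} \bigl( \sigma \mc C^+ \tilde S_r \delta_{(x_0, v_0)} \bigr) \bigr) \d r \d s,
\end{align*}
where $\tilde S_r \delta_{(x_0, v_0)} = e^{-\int_0^r \sigma(X_\rho) \Lambda(V_\rho) \d \rho} \, \delta_{(X_r, V_r)}$ and $(X_r, V_r)$ are the characteristics of $\mc T$ starting at $(x_0, v_0)$.

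In regime \textbf{R1}, the characteristics are straight lines $X_r = x_0 + r v_0$, $V_r = v_0$, and the boundedness of $\mc V$ keeps $\Lambda$ uniformly bounded so that the exponential loss factor stays bounded below by some $e^{-M\tau}$ on a time window $\tau$ that I would take as a small multiple of the GCC time $T$. Applying $\sigma \mc C^+$ to $\tilde S_r \delta_{(x_0, v_0)}$ produces, by \textbf{A1}, a measure at the point $x_0 + r v_0$ with velocity density bounded below by $\gamma \sigma(x_0 + r v_0)/|\mc V|$ on $\mc V$; integrating in $r$ on $[0, T]$ and invoking GCC~1 from \textbf{A2} gives an aggregate density lower bound of order $\kappa \gamma / |\mc V|$ after this first collision. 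A second transport-plus-collision leg then spreads this bound into a genuine density in both $x$ and $v$: the intermediate collision has already randomised velocities across $\mc V$, so the transport step $\tilde S_{s-r}$ sweeps the support across an open subset of $\T^d$, the second application of $\sigma \mc C^+$ (again bounded below thanks to GCC~1 along the second leg) randomises the terminal velocity, and the final transport $\tilde S_{\tau-s}$ fills in the remaining positions in $\T^d$. Collecting these lower bounds produces the uniform minorisation $S_\tau \delta_{(x_0, v_0)} \geq \alpha \eta$ required by \eqref{Doeblin}.

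For regime \textbf{R2}, the scheme is identical in structure but the characteristics now follow the Hamiltonian flow \eqref{eq:ODE1}. Two complications appear. First, $\mc V = \R^d$ is unbounded so $\Lambda(v)$ is a priori unbounded; however the assumption $B(v, v') \geq b(|v|)$ with $b$ decreasing ensures that restricting the outgoing velocity of the first collision to a fixed ball $B(0, R_0) \subset \R^d$ still contributes a uniform density lower bound $\geq b(R_0) \sigma(X_r)$ on that ball, after which the subsequent trajectories live on a bounded-energy set where $\Lambda$ is uniformly bounded. Second, the spatial spreading after a collision is more delicate because the flow is curved; I would adapt the perturbation and implicit-function argument used in the \textbf{R2} case of the linear BGK equation (Lemmas~3.4-3.5 of \cite{CCEY20}), viewing $\nabla_x \Phi$ as a controlled perturbation on short times, to show that from any base point of bounded energy the Hamiltonian flow reaches every point of an open set in $\T^d$ in controlled time. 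The GCC \eqref{GCC2} along Hamiltonian characteristics then plays exactly the same role as GCC~1 in \textbf{R1}, yielding the uniform bound $\int_0^T \sigma(X_r) \d r \geq \kappa$ irrespective of the starting data, and the same two-collision argument closes the proof.

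The main obstacle is the interplay between the degeneracy of $\sigma$ and the velocity randomisation given by $\mc C^+$: one collision alone preserves the spatial support of a Dirac, so the two-step Duhamel iteration is essential to spread the measure in $x$. In \textbf{R2} there is the additional difficulty that the initial exponential loss factor involves $\Lambda(V_\rho)$ on an unbounded velocity range and must be controlled uniformly in $(x_0, v_0)$; the decreasing lower bound $b(|v|)$ combined with the restriction of outgoing velocities to a fixed ball is what produces a constant independent of the starting data. Once the uniform Doeblin condition \eqref{Doeblin} is established with explicit $\alpha$ and $\tau$, Theorem~\ref{thm:Doeblin} immediately delivers \eqref{convDEG} with $C = 1/(1-\alpha)$ and $\lambda = -\log(1-\alpha)/\tau$.
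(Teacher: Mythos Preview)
Your proposal is correct and follows essentially the same route as the paper: a two-step Duhamel iteration to verify Doeblin's condition, with the GCC supplying the factor $\kappa$ at each $\sigma$-weighted collision and the lower bound on $B$ handling velocity randomisation, then Theorem~\ref{thm:Doeblin}. The paper streamlines two of your technical steps: it bounds the exponential loss globally by $e^{-t\|\sigma\|_\infty}$ and works with the pure transport semigroup $\mc T_t$ throughout (implicitly treating the scattering kernel as having bounded loss rate, so your unbounded-$\Lambda$ concern in \textbf{R2} does not arise), and for the spatial spreading on $\T^d$ it invokes a direct torus-specific lower bound of the form $\inf_{x}\int_{\mc V}\mc T_t(\delta_{x_0}B(v,\cdot))\,\d v\geq \beta/|\T^d|$ rather than the implicit-function machinery from the BGK section, which is more than needed in this compact setting.
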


\begin{proof}[Idea of the proof of Theorem \ref{thm:degenerateBolztmann}]
	The proof uses Doeblin's theorem. \ref{Doeblin} means that we quantify the effect of the transport operator on the phase space which satisfies specific assumptions, i.e. GCC, on its geometry. We present the sketch of the proof in the two regimes together.
	
	Starting from an arbitrary point with $(x_0, v_0) \in \T^d \times \mc V$ in both regimes it is possible to find some time $ t^*>0$ and a constant $\beta \in (0,1)$ such that 
	\begin{align} \label{ineq1}
		\inf_{x \in \T^d} \int_{\mc V} \mc T_t (\delta_{x_0} (x) B(v, \cdot)) \d v \geq \frac{\beta}{|\T^d|},
	\end{align} is satisfied for all $t \geq  t^*$ in the case of \textbf{\em R1} and for $t \in [ t^*,  t^* + T]$ in the case of \textbf{\em R2} (where $T$ comes from \eqref{GCC2}). In the latter case, the argument is iterated by using the estimate for short time after considering transport for a long time,  i.e. allowing jumps from a high velocity to other high velocities.

	More precisely, if we take 
	\begin{itemize}
		\item 	$ t^* = r_0/2$ and $\beta = \gamma (r_0/2)^d$ ($r_0$ is such that $B(v',v) \geq \gamma \1_{\{ v \in B(v_0, r_0)\}}$) for \textbf{\em R1},
		\item $ t^* = 1/2$ and $\beta = e^{-C(\|\nabla \Phi\|_\infty, T) }\int_{\T^d} e^{-\Phi(x)} \d x$ for \textbf{\em R2},
	\end{itemize} then we see that \eqref{ineq1} is verified. Note that $C(\|\nabla_x \Phi\|_\infty, \hess \Phi, b, T)$ is a positive constant depending on $\|\nabla_x \Phi\|_\infty, \hess \Phi, b$ and $T$. 
	
	We take $f_0 \in \mc P (\T^d \times \mc V)$ and we would like to find a positive lower bound for the solutions for $f_t$ at time $t \geq \tau$. Similar to \eqref{firstDuhamel}, for the solutions of the degenerate linear Boltzmann equation we have
	\begin{align*}
		f_t \geq e^{-t\|\sigma\|_\infty}\int_0^t \int_0^s \mc T_{t-s} m_\sigma  \mc C^+ \mc T_{s-r} m_\sigma \mc C^+ \mc T_r f_0 \d r \d s.
	\end{align*} where $m_\sigma \mu := \sigma(x) \mu$ and $\mc C^+ \mu = \int_{\mc V} B(v,v') \mu (\d v)$ for any $\mu \in \mc P$. Combining this with \eqref{ineq1} we obtain for $t \geq t^* + 2T$ 
	\begin{align*}
		f_t \geq \beta \kappa^2 e^{-t \|\sigma \|_\infty} \nu, \quad \mbox{for all } \nu \in \mc P. 
	\end{align*} Therefore \ref{Doeblin} is verified with $\tau = t^* + 2T$ and $\alpha = \beta \kappa^2 e^{-t \|\sigma \|_\infty}$.  	This concludes the proof. 
\end{proof} 
\begin{remark}
	We can compute $C, \lambda$ in \eqref{convDEG} explicitly following Theorem \ref{thm:Doeblin}. In this case,
	\begin{align*}
		C = \frac{1}{1- \beta \kappa^2  e^{-\tau  \|\sigma\|_\infty}}, \quad \mbox{and} \quad \lambda = - \frac{\log (1- \beta \kappa^2  e^{-\tau \|\sigma\|_\infty})}{\tau }.
	\end{align*} 
\end{remark}

\subsubsection{The linear kinetic Fokker-Planck equation}

\label{sec:FP}
In this section, we consider the kinetic Fokker-Planck equation with the general force term and it is given by 
\eqref{eq:kinetic} where the the collision operator $\mc C$ is defined as 
\begin{align} \label{eq:kinetic_FP}
	\mc C [f] &= \nabla_v \cdot (\nabla_v f + \nabla_v\Psi(v)f) = \Delta_v f + \nabla_v \cdot ( \nabla_v\Psi(v)f),
\end{align} where the confining potential $\Phi$ and the friction term $\Psi$ satisfy
\begin{align} \label{kinFokPlanck}
	\Phi (x) = \frac{\langle x\rangle^\gamma}{\gamma},  \quad \mbox{and} \quad \Psi(v) = \frac{\langle v\rangle^\beta}{\beta},  \, \beta \geq 2.
\end{align} We consider the phase space $\O\times \mc V = \R^d \times \R^d$. 
When $\beta = 2$ the resulting equation is known as the \emph{classical} kinetic Fokker-Planck equation whose global equilibrium state is given by the so-called Gibbs distribution
\begin{align} \label{eq:Gibbs}
	f_\infty (x,v)=  Z^{-1} e^{-\Phi (x) - \frac{|v|^2}{2}}, \quad  Z = \int e^{-\Phi (x) - \frac{|v|^2}{2}} \d z.
\end{align} In the classical case ($\beta = 2$) when $\gamma \geq 1$, the exponential convergence towards the global equilibrium \eqref{eq:Gibbs} is well-known, see e.g. \cite{HN04, V09, MM16} and references therein. Moreover, the well-posedness of the Cauchy problem is implied by the hypoelliptic nature of the equation, see \cite{HN04} and references therein.

For weaker confining potentials, i.e. when $\gamma \in (0,1)$, there is no spectral gap and the convergence is sub-geometric. In \cite{BCG08,DFG09}, the authors proved respectively that the convergence rates are polynomial and subgeometric in the weighted total variation norms. They used probabilistic arguments including Harris type theorems. Particularly in \cite{BCG08}, the authors showed how Harris-type arguments can be useful to obtain Poincaré inequalities for the kinetic Fokker-Planck equation for $\gamma \geq 1$. It is known that in this case, the classical Poincaré inequality does not hold. 

In \cite{C20}, the author generalised the previous results and provided the first quantitative algebraic rate in the case where $\gamma \in (0,1)$ and $\beta = 2$ by using classical hypocoercivity methods. Moreover in a later work \cite{C21}, the author considered the kinetic Fokker-Planck equation with general force term, i.e. \eqref{eq:kinetic} with \eqref{eq:kinetic_FP} with $\beta \geq 2$ and $\gamma \geq 1$ and shows the exponential convergence to a unique equilibrium in a quantitative manner by using Harris-type arguments. 

Lastly, we also mention \cite{L20} where the author used Harris-type approaches to show the existence of a stationary solution and convergence towards it for the fractional Fokker-Planck equation.  

In this section we present the main results of \cite{C20, C21} in the following theorem:

\begin{theorem} \label{thm:kinetic_Fokker_Planck}
	Suppose that $t \mapsto f_t$ is the solution of the kinetic Fokker-Planck equation \eqref{eq:kinetic}-\eqref{eq:kinetic_FP} with the initial data $f_0 \in \mc P_{\phi_2}^p (\R^d \times \R^d)$ with $p \in [1, +\infty)$. We consider either of the three regimes below:
	\begin{itemize}
		\item [\textbf{R1.}] Suppose that $\beta = 2$ and $\gamma \in (0,1)$ in \eqref{kinFokPlanck}.
		\item [\textbf{R2.}] Suppose that $\beta \geq 2$ and $\gamma \geq 1$ in \eqref{kinFokPlanck}.
	\end{itemize}
	Then there exist positive constants $C, \lambda$ which are independent from $f_0$, such that 
	\begin{align}
		\|f_t - f_\infty\|_{L^p_ {\phi_1}} \leq C e^{-\lambda \alpha (t)}  	\|f_t - f_\infty\|_{L^p_{\phi_2}}, 
	\end{align}
	where $f_\infty$, given by \eqref{eq:Gibbs}, is the only equilibrium state, and for small constants $\varepsilon, \delta >0$, the weight functions $\phi_1,\, \phi_2$, which are independent from $f_0$, are given by 
	\begin{alignat*}{5}
		\phi_1(z) &= f_\infty(z)^{- \frac{p-1}{p}}, \qquad &\phi_2(z) &= f_\infty (z)^{- \frac{p-1}{p}- \delta}, \quad &\alpha (t) &= t^\xi, \quad \tag{for \textbf{R1}} \\
		\phi_1 (z) &=\phi_2 (z)=  e^{\chi\varphi (z)}, \qquad &\varphi (z) &= |v|^2 + \Phi (x) + \varepsilon v \cdot \nabla_x \langle x \rangle, \quad &\alpha (t) &= t, \quad \tag{for \textbf{R2}}
	\end{alignat*} with $\chi>0$ and $\xi \in (0, \gamma/(2-\gamma))$.
	\begin{itemize}
		\item [\textbf{R3.}] Suppose that $\beta = 2$ and $\gamma \in (0,1)$ in \eqref{kinFokPlanck}. Then for all $f_0 \in \mc P_\phi (\R^d \times \R^d)$ there exists a positive constant $C>0$, and  
		$k \geq 1$ we have 
		\begin{align*}
			\|f_t - f_\infty\|_1 \leq C (1+t)^{- \frac{k}{1- \gamma/2}}	\|f_t - f_\infty\|_{\phi},
		\end{align*} where $\phi (z) = (1 +\varphi (z))^k$ with $\varphi (x,v) = |x|^2 + |v|^2$. 
	\end{itemize}
\end{theorem}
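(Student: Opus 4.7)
The plan is to apply Harris's theorem (Theorem \ref{thm:harris}) for \textbf{R2} and its sub-geometric version (Theorem \ref{thm:subgeoHarris}) for \textbf{R3}, while \textbf{R1} requires a classical hypocoercivity/entropy argument in $L^p$ because the target weight $\phi_1 = f_\infty^{-(p-1)/p}$ lies outside the natural range of Harris-type $L^1$ estimates. In each Harris-based case, the minorisation condition on a sublevel set $\{\phi \leq R\}$ follows from H\"ormander's hypoellipticity applied to the kinetic Fokker--Planck generator: the vector fields $\{\nabla_v, [\nabla_v, v \cdot \nabla_x]\} = \{\nabla_v, \nabla_x\}$ span the tangent space, so the semigroup admits a smooth, strictly positive transition kernel after any positive time, which on a compact set yields the required uniform lower bound $\alpha \eta$.

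For \textbf{R2}, the core computation is the Foster--Lyapunov inequality $\mc L^* \phi \leq -\zeta \phi + D$ with $\phi = e^{\chi \varphi}$ and $\mc L^* = v \cdot \nabla_x - \nabla_x \Phi \cdot \nabla_v + \Delta_v - \nabla_v \Psi \cdot \nabla_v$. The chain rule gives
\begin{align*}
	\mc L^* \phi = \phi \bigl( \chi\, \mc L^* \varphi + \chi^2 |\nabla_v \varphi|^2 \bigr),
\end{align*}
so it suffices to bound the bracket by a negative constant outside a compact set. The dissipative term $-2 v \cdot \nabla_v \Psi \sim -2 \langle v \rangle^\beta$ coming from $|v|^2$ in $\varphi$ absorbs the quadratic feedback $\chi^2 |\nabla_v \varphi|^2 \sim 4\chi^2 |v|^2$ whenever $\beta \geq 2$ and $\chi$ is small. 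The cross term $\varepsilon\, v \cdot \nabla_x \langle x \rangle$ is the essential source of spatial coercivity: its interaction with the transport part contributes $-\varepsilon\, \nabla_x \Phi \cdot \nabla_x \langle x \rangle \sim -\varepsilon\, \langle x \rangle^{\gamma - 1}$, which is bounded away from $0$ for $\gamma \geq 1$ and blows up for $\gamma > 1$. Sign-indefinite by-products such as $-v \cdot \nabla_x \Phi$ and $\varepsilon\, v^{\top} D^2 \langle x \rangle v$ are controlled by Young's inequality against the two dissipative contributions once $\varepsilon$ is chosen small enough. Harris's theorem then delivers exponential contraction in $L^1_\phi$; the $L^p_\phi$ statement follows via the semigroup extension/interpolation argument of \cite{CM21}, which applies here because $\phi_1 = \phi_2$.

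For \textbf{R3}, the weak potential $\Phi \sim \langle x \rangle^\gamma$ with $\gamma \in (0,1)$ cannot sustain a full Foster--Lyapunov inequality, so one works with a polynomial Lyapunov $\tilde \phi = (1 + \tilde \varphi)^k$ built on a modified functional $\tilde \varphi = |v|^2 + \Phi(x) + \varepsilon\, v \cdot \nabla_x \langle x \rangle$. The chain rule now yields
\begin{align*}
	\mc L^* \tilde \phi = k(1+\tilde\varphi)^{k-1}\, \mc L^* \tilde \varphi + k(k-1)(1+\tilde\varphi)^{k-2}\, |\nabla_v \tilde \varphi|^2,
\end{align*}
and combining the velocity dissipation $\mc L^* \tilde \varphi \lesssim -\langle v \rangle^2 + C$ with an interpolation that trades $\langle v \rangle^2$ against $\tilde \varphi \sim \langle v \rangle^2 + \langle x \rangle^\gamma$ produces the weak Foster--Lyapunov inequality $\mc L^* \tilde \phi \leq -\zeta\, \tilde \phi^{\, a} + D$ with $a = k/(k + 1 - \gamma/2)$. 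Applying Theorem \ref{thm:subgeoHarris} with $V(s) = s^a$ gives $H_V^{-1}(t) \sim t^{1/(1-a)}$ and $V \circ H_V^{-1}(t) \sim t^{a/(1-a)} = t^{k/(1-\gamma/2)}$; retaining the slower of the two rates yields the asserted polynomial decay $(1+t)^{-k/(1-\gamma/2)}$, and the comparison $\tilde \phi \lesssim (1 + |x|^2 + |v|^2)^k$ transfers the estimate to the weight stated in the theorem.

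The main obstacle in both Harris-based regimes is the construction of the Lyapunov function: the transport operator carries no intrinsic dissipation in $x$, so spatial coercivity has to be generated indirectly through a cross term $v \cdot \nabla_x g(x)$ whose commutator with the full generator produces the decisive $-\nabla_x \Phi \cdot \nabla_x g$ contribution, and the smallness of $\varepsilon$ and (in \textbf{R2}) of $\chi$ is forced by the need to absorb the sign-indefinite by-products $-v \cdot \nabla_x \Phi$ and $\chi^2 |\nabla_v \varphi|^2$. For \textbf{R1}, where the target norm involves the inverse Gibbs weight $f_\infty^{-(p-1)/p}$ and therefore falls outside the natural reach of Harris-type $L^1$ arguments, I would follow \cite{C20} and construct a modified $L^p$-entropy functional $\mc H_p(f_t \mid f_\infty)$ satisfying a sub-coercive dissipation inequality of the form $\tfrac{d}{dt} \mc H_p \leq -c\, \mc H_p^{1/\xi}$, which integrates directly to the stretched-exponential rate $e^{-\lambda t^\xi}$; the delicate point is the hypocoercive modification of the standard weighted Fisher information needed to handle the degeneracy of the dissipation in the $x$-variable.
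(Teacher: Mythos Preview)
Your treatment of \textbf{R2} and \textbf{R1} aligns with the paper. For \textbf{R2} the paper verifies the same Foster--Lyapunov inequality with $\phi = e^{\chi\varphi}$, $\varphi = |v|^2 + \Phi(x) + \varepsilon\, v\cdot\nabla_x\langle x\rangle$, obtaining $\mathcal{L}^*\phi \leq -\zeta h(z)\phi + D$ with $h(z) = \langle v\rangle^\beta + \langle x\rangle^{\gamma-1} + 1$; for \textbf{R1} the paper likewise defers to the classical hypocoercivity argument of \cite{C20}. One methodological difference in \textbf{R2}: for the minorisation the paper does not invoke H\"ormander's theorem directly but instead establishes a pointwise lower bound along characteristics, upgrades it to a lower bound on a small ball via regularity/continuity estimates, and then iterates with Duhamel's formula (the details are in Theorem~5.2 of \cite{C21}). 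Your hypoellipticity shortcut is a legitimate alternative, but it is not what the paper does.

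The substantive discrepancy is \textbf{R3}. The paper explicitly states that \textbf{R3}, like \textbf{R1}, is proved by classical hypocoercivity methods from \cite{C20} and \emph{not} by the subgeometric Harris theorem. Your Harris route for \textbf{R3} has a genuine gap: with $\tilde\varphi = |v|^2 + \Phi(x) + \varepsilon\,v\cdot\nabla_x\langle x\rangle$ and $\gamma \in (0,1)$, the only spatial damping produced in $\mathcal{L}^*\tilde\varphi$ is the term $-\varepsilon\,\nabla_x\Phi\cdot\nabla_x\langle x\rangle \sim -\varepsilon\langle x\rangle^{\gamma-1}$, which \emph{vanishes} as $|x|\to\infty$. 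Evaluating at $v=0$ with $|x|$ large gives $\mathcal{L}^*\tilde\varphi \to 2d > 0$, and hence
\[
\mathcal{L}^*\tilde\phi \;=\; k(1+\tilde\varphi)^{k-1}\,\mathcal{L}^*\tilde\varphi + k(k-1)(1+\tilde\varphi)^{k-2}\,|\nabla_v\tilde\varphi|^2 \;\sim\; 2dk\,\langle x\rangle^{\gamma(k-1)}
\]
is unbounded for every $k>1$; no finite constant $D$ can absorb it, so the weak Foster--Lyapunov inequality $\mathcal{L}^*\tilde\phi \leq -\zeta\,\tilde\phi^{\,a} + D$ fails. The ``interpolation that trades $\langle v\rangle^2$ against $\tilde\varphi$'' you invoke cannot rescue the estimate in the regime where $|v|$ is small and $|x|$ is large, since there is no velocity dissipation available to trade. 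This is precisely the obstruction that leads the paper (following \cite{C20}) to abandon the Harris framework for \textbf{R3} and to obtain the polynomial rate $(1+t)^{-k/(1-\gamma/2)}$ by a direct hypocoercive construction instead.
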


\begin{proof}
	[Idea of the proof of Theorem \ref{thm:kinetic_Fokker_Planck}] We only give the idea of the proof for the regime \textbf{\em R2} as this is the only part which uses Harris-type arguments.
	The regimes \textbf{\em R1, R3} use classical hypocoercivity approaches. We refer to \cite{C20} for their proofs.
	\paragraph{\em R2.} The proof uses Theorem \ref{thm:harris}. We write the kinetic Fokker-Planck equation as $\p_t f = \mc L [f] =  (\mc C  - \mc T )[f]$ where $\mc C$ is given by \eqref{eq:kinetic_FP}. Then the \ref{Foster_Lyap} is verified with $\phi (z) := e^{\chi \varphi(z)}$ for $\beta>0$ and $\varphi (z) = |v|^2 + \Phi(x) + \ve v \cdot \nabla_x \langle x \rangle $,
	\begin{align*}
		\mc L^* \phi (z) \leq -\zeta  h(z)\phi (x) + D,
	\end{align*} where $h(z) = \langle v \rangle ^\beta + \langle x \rangle^{\gamma-1} + 1$. Here we proved Inequality \eqref{eq:FL3} which uses $\mc L^*$, the formal adjoint of $\mc L$. 
	
	The \ref{Minorisation} is verified by tracking the characteristics of the flow of the equation as in the previous sections. It is done in a couple of steps. First, we prove that the solutions at an initial point $(x_0,v_0) \in \O \times \mc V$ remain bounded from below by a constant. Then by some regularity estimates, we prove that the solutions are locally continuous. This enables us to extend the lower bound to a small ball from any initial point. Then we prove the \ref{Minorisation} by iterating this and applying Duhamel's formula. The details can be found in Theorem 5.2 of \cite{C21}. 
\end{proof}

\subsection{Kinetic equations arising in mathematical biology}
\label{sec:MathBio}

The Harris-type theorems have been successfully used in the study of quantitative convergence to equilibrium for several macroscopic equations arising in mathematical biology, particularly in structured population dynamics. Some important examples are the renewal equation \cite{G18}, structured neuron population models \cite{CY19, DG20, PS19, TPS21} and the growth fragmentation equation \cite{CGY21}. However, applications of these theorems on the mesoscopic models, so-called kinetic equations, in the context of biology are more recent. 

Kinetic equations obtained in the mean-field limit of the agent-level interactions in applied sciences differ from the kinetic equations arising in mathematical physics in some aspects. The former often do not have explicit stationary states, whereas, for a large class of kinetic equations in physics, the equilibrium state is given by a Gibbs (or in some cases Maxwellian) distribution. So, they can be written down explicitly. The classical hypocoercivity techniques often require obtaining Poincaré inequalities in the norms which are the inverse of the invariant measure, i.e. equilibrium, and this creates complications when the equilibrium is not known explicitly. For a more detailed discussion we refer to \textbf{Motivation and aim} in Section \ref{sec:introduction}.

Another difference is that for the kinetic equations arising in applied sciences the confinement mechanism is not induced via an external force field. It comes from the internal dynamics. This will be made more precise in the next section. Since the classical hypocoercivity methods are developed for the confinement mechanisms coming from a confining potential, sometimes they give restrictive results on the kinetic models used outside the gas theory.

In this section, we give two examples of how Harris-type theorems can be used efficiently for obtaining quantitative convergence results under relaxed assumptions and allow generalisations of the previous results which are based on classical hypocoercivity/general relative entropy techniques. Namely, we consider the run and tumble equation and the kinetic FitzHugh-Nagumo equation.

\subsubsection{The run and tumble equation} 

\label{sec:runtumble}

The run and tumble equation is a kinetic-transport equation which describes the movement of bacteria under the effect of a chemotactic substance. The movement of bacteria is a combination of a transport with a constant speed along the gradient of the chemoattractant, called \emph{run}, and a random change in the orientation towards the chemoattractant-dense regions, called \emph{tumble}. The model is introduced in \cite{S73,A80} based on some experiments done on a bacterium called \emph{E.coli}. The operators $\mc T$ and $\mc C$ in \eqref{eq:kinetic} are given by
\begin{align} \label{eq:RT}
	\begin{split}
		\mc T [f] (t,z) &= v \cdot \nabla_x f (t,z), \\
		\mc C [f] (t,z) &= \int_{\mc V} \lambda (v' \cdot \nabla_x M(x)) \kappa (v,v') f' \d v' - \lambda (v \cdot \nabla_x M(x)),
	\end{split}
\end{align} where $f := f(t,x,v) \geq  0$ (resp., $f^\prime : = f(t,x,v^\prime)$) is the probability density of finding a bacteria at time $t \geq 0$, at the position $x \in \R^d$, moving with a velocity $v \in \mc V \in \R^d$ where $\mc V$ is a unit ball centered at the origin with a radius $R_0$, i.e. $\mc V = B (0, R_0)$, such that $|\mc V| = 1$. Here $v$ and $v'$ denote the pre-and post-tumbling velocities respectively. In the definition of the operator $\mc C$, $\lambda : \R \to [0, + \infty)$ is called the tumbling rate. We define the external signal $M(x)$ in terms of the chemoattractant concentration $S(x)$: $M(x) = \log (S(x))$. The tumbling kernel $\kappa (v,v')$ gives the probability distribution of moving from velocity $v$ to velocity $v'$ so that it satisfies that $\int_{\mc V} \kappa (v,v') \d v' =1$. For existence results on the run and tumble equation we refer to \cite{BC10} and references therein. 

In \cite{MW17}, the authors showed the existence of a nontrivial stationary state for this equation and exponential convergence towards it when $\O= \R^d$. They assume that the chemoattractant density function $S(x)$ is radially symmetric. This assumption essentially reduces the result to dimension $d=1$. 

A recent quantitative result concerning the hypocoercivity of this equation using Harris's theorem can be found in \cite{EY21} where the authors could remove the radial symmetry assumption on $S(x)$ of \cite{MW17}. 

The main result of \cite{EY21} is given by

\begin{theorem} \label{thm:RT}
	Suppose that $t \mapsto f_t$ is the solution of the run and tumble equation \eqref{eq:kinetic}-\eqref{eq:RT} with the initial data $f_0 \in \mc P (\R^d \times \mc V)$. Suppose further that the following hypotheses hold:
	\begin{itemize}
		\item[\textbf{A1.}] The tumbling kernel is uniform, i.e. $\kappa \equiv 1$. 
		\item[\textbf{A2.}]  \label{RT1} The tumbling rate $\lambda : \R \to [0, + \infty)$ is given by $\lambda (m) = 1 - \chi \psi (m)$ where $\chi \in (0,1)$ is the chemotactic sensitivity and $\psi$ is either $\psi (m) = \sign (m)$ or it is a bounded, odd and increasing function with $\|\psi\|_\infty \leq 1$. Suppose also that $\psi (m)m$ is differentiable. 
		\item[\textbf{A3.}] \label{RT2} There exist $R\geq 0$ and $\tilde C >0$ such that whenever $|x|>R$, $|\nabla_x M(x)| \geq \tilde C$. Moreover $|\nabla_x M|$, $\hess (M)$ are bounded and 
		\begin{align*}
			\lim_{|x| \to \infty}  M (x) = - \infty, \quad 	\lim_{|x| \to \infty}  \hess (M) (x) = 0.
		\end{align*}
		\item[\textbf{A4.}] \label{RT3} There exist a positive constant $\tilde \lambda (\psi, \|\nabla_x M\|_{\infty}) $ (depending on $\psi$ and $\|\nabla_x M\|_{\infty}$) and a positive integer $k(\psi)$ (depending on $\psi$) such that 
		\begin{align}
			\int_{\mathcal{V}} 	\psi(m') m' \d v'   \geq \tilde{\lambda} (\psi, \|\nabla_x M\|_{\infty}) |\nabla_xM(x)|^k,
		\end{align}  where $m' = v' \cdot \nabla_x M(x)$.
	\end{itemize}
	Then the run and tumble equation admits a unique stationary state, $f_\infty$,  and there exist positive constants $C, \Lambda$ which are  independent from $f_0$, such that for all $t \geq0$, 
	\begin{align} \label{eq:thmRT}
		\|f_t - f_{\infty} \|_\phi  \leq C e^{-\Lambda t} \| f_0 -f_{\infty}\|_{\phi}, 
	\end{align} where \begin{align} \label{FL:runtumb1}
		\phi (x,v) = \left(  1 -\gamma m -\beta \gamma \psi(m) m \right)e^{-\gamma M(x)} 
	\end{align}  with $m = v \cdot \nabla_x M(x)$, and $\gamma, \beta$ positive constants which can be computed explicitly. Moreover, suppose that 
	\begin{itemize}
		\item[\textbf{A4.}] There exist positive constants $C_1,  C_2,\alpha $ such that \begin{align*}
			C_1 -\alpha \langle x \rangle \leq M (x) \leq C_2 - \alpha \langle x \rangle,
		\end{align*} where $\langle x \rangle = \sqrt{1+ |x|^2}$.
	\end{itemize}
	Then the contraction \eqref{eq:thmRT} holds (for different positive constants $C$ and $\lambda$) with an exponential weight function given by
	\begin{align*}
		\phi (x) = e^{\omega \langle x \rangle}.
	\end{align*} where $\omega$ is a negative constant.
\end{theorem}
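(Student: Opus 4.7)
The plan is to apply Harris's theorem (Theorem \ref{thm:harris}) to the Markov semigroup generated by the run and tumble equation, with the weight $\phi$ defined in \eqref{FL:runtumb1}. Write $\partial_t f = \mc L f$ where $\mc L f = -v \cdot \nabla_x f + \mc C f$. Using $\kappa \equiv 1$ and $|\mc V|=1$, the formal adjoint acts on test functions $g$ as
\[
\mc L^* g(x,v) = v \cdot \nabla_x g + \lambda(m) \int_{\mc V} [g(x,v') - g(x,v)]\, dv',
\]
with $m = v \cdot \nabla_x M(x)$. The task is to verify (i) a differential Foster-Lyapunov inequality $\mc L^* \phi \leq -\zeta \phi + D$ (see the remark following Theorem \ref{thm:harris}) and (ii) a minorisation condition on the sublevel sets $\mc A = \{\phi \leq R\}$, after which Theorem \ref{thm:harris} yields \eqref{eq:thmRT}.

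For the Foster-Lyapunov step, split $\phi = A \cdot B$ with $A(x,v) = 1 - \gamma m - \beta\gamma \psi(m)m$ and $B(x) = e^{-\gamma M(x)}$, noting $v \cdot \nabla_x B = -\gamma m B$. The transport contribution is $v \cdot \nabla_x \phi = B(v \cdot \nabla_x A) - \gamma m \phi$, where $v \cdot \nabla_x A$ produces a term proportional to $v^T \hess M(x) v$; by A3 this term is bounded and vanishes as $|x| \to \infty$. For the collision part, the symmetry of $\mc V$ gives $\int_{\mc V} m'\, dv' = 0$, so
\[
\mc C^* \phi = \lambda(m) B \Bigl[\gamma m + \beta\gamma \psi(m)m - \beta\gamma \int_{\mc V} \psi(m')m'\, dv'\Bigr].
\]
The essential coercive term is $-\beta\gamma \lambda(m) B \int_{\mc V} \psi(m')m'\, dv' \leq -\beta\gamma \lambda(m) \tilde\lambda |\nabla_x M|^k B$, which by A3--A4 is bounded below in modulus by a positive multiple of $B$ once $|x| > R$ (using also the uniform lower bound $\lambda(m) \geq 1 - \chi > 0$ from A2). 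The remaining cross terms, after combining $-\gamma m \phi$ with $\lambda(m) B \gamma m$ and the $\psi(m)m$ factors, are of the form $\gamma^2 m^2 B$, $\beta\gamma^2 \psi(m) m^2 B$ and $-\chi\gamma \psi(m) m B$; since $|v|$ and $|\nabla_x M|$ are bounded, choosing $\gamma, \beta > 0$ sufficiently small makes these cross terms dominated by the coercive piece for $|x|$ large. Inside $\{|x| \leq R\}$ all quantities are bounded, contributing only to the constant $D$. This yields $\mc L^*\phi \leq -\zeta\phi + D$.

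For the minorisation, observe that with $\gamma,\beta$ chosen small enough $A$ is bounded between two positive constants on $\R^d \times \mc V$, so $\phi$ is comparable to $e^{-\gamma M(x)}$ and sublevel sets $\mc A = \{\phi \leq R\}$ are contained in $K_R \times \mc V$ for some compact $K_R \subset \R^d$. On this compact set I iterate Duhamel's formula as in the proof of Theorem \ref{thm:linearBGK}, obtaining
\[
f_\tau \geq e^{-C\tau} \int_0^\tau \int_0^s \mc T_{\tau-s} \mc C^+ \mc T_{s-r} \mc C^+ \mc T_r f_0\, dr\, ds,
\]
where $\mc C^+ f(x,v) = \lambda(m) \int_{\mc V} f(x,v')\, dv'$. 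The uniform kernel $\kappa \equiv 1$ and the lower bound $\lambda \geq 1-\chi$ guarantee that each post-tumble velocity distribution has a positive density on all of $\mc V$; combined with straight-line transport over the compact $K_R$ (which allows any target in $\mc A$ to be reached in time $\tau$ for $\tau$ large enough relative to $\diam(K_R)/R_0$), this gives \eqref{Minorisation} uniformly in the starting point in $\mc A$. Theorem \ref{thm:harris} then yields \eqref{eq:thmRT}. The second assertion follows immediately: under $C_1 - \alpha\langle x\rangle \leq M(x) \leq C_2 - \alpha\langle x\rangle$, the factor $e^{-\gamma M(x)}$ is sandwiched between positive multiples of $e^{\gamma\alpha\langle x\rangle}$, while $A$ is bounded above and below by positive constants, so $\phi$ is equivalent to $e^{\gamma\alpha\langle x\rangle}$; the weighted total variation norms are thus equivalent and the estimate transfers with a rescaled constant in the exponent.

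The main obstacle is the Foster-Lyapunov computation: the term $-\gamma m \phi$ arising from transport is sign-indefinite in $m$, and the expansion of $\mc L^*\phi$ produces several mixed terms of order $\gamma^2$ and $\beta\gamma$ whose signs are not all favourable. The shape of $\phi$, and in particular the extra $-\beta\gamma\psi(m)m$ factor inside $A$, is engineered precisely so that the collision operator generates the coercive integral $\int \psi(m')m'\, dv'$ controlled by A4 while simultaneously producing the sign-definite damping $-\chi\beta\gamma \psi(m)^2 m$; quantifying how small $\gamma$ and $\beta$ must be so that this coercive contribution swallows all unfavourable cross terms uniformly on $\R^d \times \mc V$ is the delicate point of the argument.
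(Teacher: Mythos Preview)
Your proposal is correct and follows essentially the same route as the paper: Harris's theorem, with the Foster--Lyapunov inequality verified for the weight \eqref{FL:runtumb1} by exploiting the coercive integral $\int_{\mc V}\psi(m')m'\,dv'$ from \textbf{A4}, and the minorisation obtained by iterating Duhamel's formula (transport--jump--transport--jump) starting from a Dirac mass in a compact sublevel set of $\phi$; the comparison $\phi \asymp e^{-\gamma M(x)} \asymp e^{\gamma\alpha\langle x\rangle}$ under the two-sided bound on $M$ gives the final norm equivalence. The only point to sharpen is the choice of constants: rather than taking both $\gamma$ and $\beta$ small, the paper fixes $\beta = \chi/(1+\chi)$ (so that the bad cross term $+\beta\gamma\psi(m)m\,B$ is beaten by the good term $-\chi\gamma\psi(m)m\,B$ at the same order in $\gamma$) and then takes $\gamma$ small to absorb the remaining $O(\gamma^2)$ terms---this is exactly the ``delicate point'' you flag at the end.
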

\begin{proof}[Idea of the proof of Theorem \ref{thm:RT}]
	The proof uses Harris's theorem. The \ref{Foster_Lyap} is satisfied with the weight function \eqref{FL:runtumb1} and with the constants
	\begin{align*}
		\beta = \frac{\chi}{1+ \chi}, \quad \gamma \leq \min \left \{ \frac{\tilde{\lambda}\chi(1-\chi)\xi}{8(1+\chi)} , \frac{1+\chi}{2(2+\chi)R_0 \|\nabla_x M\|_\infty} \right \}, \quad \mbox{with} \quad \xi:=
		\begin{cases}
			\tilde C^{k-2}, \quad &\mbox{if } k<2,\\ 
			1, \quad &\mbox{if } k=2, \\
			\|\nabla_x M\|_\infty^{k-2}, \quad &\mbox{if } k>2.
		\end{cases}
	\end{align*} All the constants appearing above are coming from the hypotheses. The computations can be found in Section 2.1 of \cite{EY21} but here we discuss how to get an intuition about the shape of the weight function \eqref{FL:runtumb1}. 
	
	We expect that Lyapunov functional will have exponential tails if we would like to have an exponential decay to equilibrium. We also notice that the confinement terms are bounded in  \eqref{eq:RT}. Therefore, we choose a function of $M$ which is $e^{-\gamma M}$ and it behaves like our guess. Then we look for a weight function which is closely related to $e^{-\gamma M}$. We repeatedly differentiate $\int_{\R^d \times \mc V} f (t,z) \d z$ along the flow of the equation which we re-write as $\p_t f  = \mc L f$. This means  that we apply the formal adjoint $\mc L ^*$, of $\mc L$, to the terms of the function $\phi$ until there is a term which does not change sign. Then the precise form of \eqref{FL:runtumb1} is found by combining $e^{-\gamma M}$ and the terms appearing in the derivatives of $\int_{\R^d \times \mc V} f (t,z) \d z$.
	
	To verify the \ref{Minorisation}, we first consider the transport equation  $\p_t f + v \cdot \nabla_x f+ \lambda (x,v) f = 0$ and the jump operator $\mc J [f] = \int_\mc V \lambda' f' \d v'$ where $\lambda' := \lambda (v \cdot \nabla_x M(x))$. We write the solution to the transport part by the method of characteristics. Then we apply iteratively the transport and the jump operators to the Dirac measure intial data $f_0 (x,v) = \delta_{(x_0,v_0)}(x,v)$ where $ (x_0, v_0) \in \R^d \times \mc V$ and give a lower bound for the solution of the full equation by re-writing it using Duhamel's formula. After each jump we integrate the transported measure in $\mc V$. Finally we obtain the \ref{Minorisation} by choosing $\tau = 3 + \tilde R/ R_0$ for all $x_0 \in B (0, \tilde R) $ with an explicit bound depending on $\tilde R, R_0, \chi$, and independently from the initial data. Since the underlying semigroup is Markov, we extend the initial data to any probability measure. 
	Having these two conditions satisfied, Theorem \ref{thm:harris} gives the result.
	
	In order to show the contraction in the norm with an exponential weight, we use that fact that the conditions on $\gamma$ are enough to obtain the inequality $e^{-\gamma M} \leq 2 \phi$ where $\phi$ is given by \eqref{FL:runtumb1}. Combining this with the bound on $M$, we can find a constant $\omega  <0$ such that the contraction \eqref{eq:thmRT} holds in the weighted total variation norm with the weight $e^{\omega \langle x \rangle } $. 
\end{proof}

As in the case above, when $S$ is a fixed function of $x$, the resulting equation is linear. However, in more realistic models of run and tumble process, $S$ solves a screened Poisson equation, i.e. 
\begin{align} \label{eq:S-Poisson}
	- \Delta_x S + \alpha S = \rho(t,x) := \int_{\mathcal{V}} f(t,x,v) \d v.
\end{align} It is difficult to prove even the existence of a stationary state for the problem when the run and tumble equation is coupled with \eqref{eq:S-Poisson}. We refer to e.g. \cite{C20_2} where the author showed the existence of travelling wave solutions for the coupled nonlinear run and tumble equation and some other related works can be found in the references of \cite{C20_2}. 

In \cite{EY21}, we consider another nonlinear coupling (given in \eqref{weak_nonlin} below) for the run and tumble equation. This coupling plays the role of being an intermediate step between the linear model \eqref{eq:kinetic}-\eqref{eq:RT} and the nonlinear model \eqref{eq:kinetic}-\eqref{eq:RT}-\eqref{eq:S-Poisson}. A detailed discussion about the motivation behind \eqref{weak_nonlin} and how it is obtained can be found in the last section of \cite{EY21}.  The result concerning this nonlinear equation is given by
\begin{theorem} \label{thm:weak_NL}
	Suppose that $t \mapsto f_t$ is the solution of the run and tumble equation \eqref{eq:kinetic}-\eqref{eq:RT} with an initial data $f_0 \in \mc P (\R^d \times \mc V)$. Suppose further that \textbf{A1, A2} and \textbf{A3} of Theorem \ref{thm:RT} hold and that $\psi$ is a Lipschitz function. 
	\begin{itemize}
		\item[\textbf{A4.}] We assume also that $S$ solves
		\begin{align} \label{weak_nonlin}
			S (x) = S_\infty (x) (1+ \eta N * \rho ),
		\end{align} where $N$ is a positive, smooth function with a compact support, and $\eta>0$ is a small constant.
		\item[\textbf{A5.}]  Let $S_\infty$ be a smooth function satisfying for some positive constants $C_1, C_2, \alpha$ that
		\begin{align*} 
			C_1 -\alpha \langle x \rangle \leq M_\infty (x): =\log(S_\infty(x))\leq C_2 - \alpha \langle x \rangle.
		\end{align*}
	\end{itemize} Then there exists some constant $\tilde{C}$ depending on $C_1, C_2,$ and $\alpha$ such that if $\eta < \tilde{C}$ then there exists a unique steady state solution to the nonlinear run and tumble equation \eqref{eq:kinetic}-\eqref{eq:RT} where $S$ solves \eqref{weak_nonlin}. Moreover for any initial data $f_0 \in \mathcal{P} (\R^d \times \mathcal{V})$ satisfying $\|f_0\|_\phi \leq \bar C$ with $\phi = e^{\omega \langle x \rangle}$ where $\omega <0$ and the constant $\bar C>0$  depends on $\|\psi'\|_\infty, \|\nabla_x N\|_\infty$ and some other constants coming from the previous  hypotheses and Theorem \ref{thm:RT}; there exist some positive constants $C$ and $\Lambda$ satisfying
	\begin{align*}
		\| f_t - f_\infty\|_\phi \leq Ce^{-\Lambda t/2} \|f_0 -f_\infty\|_{\phi},
	\end{align*} where $\phi = e^{\omega \langle x \rangle}$.
\end{theorem}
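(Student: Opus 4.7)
The plan is twofold: first establish existence and uniqueness of a stationary state $f_\infty$ via a Banach fixed point argument, then obtain exponential convergence by viewing the weakly nonlinear equation as a small Lipschitz perturbation of the linear run and tumble equation driven by the fixed signal $S_\infty$, whose decay rate is delivered by Theorem \ref{thm:RT}.

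For the first step, I would, for each admissible spatial density $\rho$, define the perturbed signal $M_\rho(x) := \log S_\infty(x) + \log(1 + \eta (N*\rho)(x))$, so that $\nabla_x M_\rho = \nabla_x M_\infty + \eta \nabla_x(N*\rho)/(1 + \eta N*\rho)$. For $\eta$ small relative to $\|N\|_\infty, \|\nabla_x N\|_\infty$ and the lower bound on $|\nabla_x M_\infty|$ provided by \textbf{A5}, the function $M_\rho$ lies in a $W^{2,\infty}$-neighbourhood of $M_\infty$ on which hypotheses \textbf{A2}--\textbf{A3} (and hence \textbf{A4}) of Theorem \ref{thm:RT} hold uniformly in $\rho$. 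Consequently, the associated linear run and tumble equation with frozen signal $M_\rho$ admits a unique stationary state, yielding a well-defined map $\Gamma : \rho \mapsto \int_{\mc V} f_\infty^{M_\rho} \, \d v$ on the ball $\{\|\rho\|_\phi \leq \bar C\}$ with $\phi = e^{\omega \langle x \rangle}$. Differentiating the stationary equation in $\rho$ and inverting the linear part using the uniform spectral gap from Theorem \ref{thm:RT}, one checks that $\Gamma$ is $O(\eta)$-Lipschitz, so Banach's fixed point theorem produces a unique $\rho_\infty$, from which the stationary $f_\infty$ is recovered.

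For the second step, I would write $f_t = f_\infty + g_t$ and decompose the generator as $\partial_t g_t = \mc L_{M_\infty} g_t + \eta\, \mc R[g_t, f_t]$, where $\mc L_{M_\infty}$ is the linear generator with fixed signal $M_\infty$ and $\mc R$ collects the nonlinear correction, which depends on the convolution $N*\rho_t$ through the tumbling rate $\lambda(v \cdot \nabla_x M_{\rho_t})$. Since $\psi$ is Lipschitz and $N$ is smooth with compact support, an elementary computation yields the bound $\|\mc R[g_t, f_t]\|_\phi \leq C' \|g_t\|_\phi$, with $C' = C'(\|\psi'\|_\infty, \|\nabla_x N\|_\infty)$, as long as $f_t$ stays in the ball of radius $\bar C$, a property which is propagated by a Lyapunov argument using \eqref{FL:runtumb1} for $\eta$ small. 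Theorem \ref{thm:RT} provides the contraction $\|e^{t\mc L_{M_\infty}}\|_{\phi \to \phi} \leq C e^{-\Lambda t}$; combining this with Duhamel's formula and Gronwall's lemma gives $\|g_t\|_\phi \leq C e^{-(\Lambda - C C' \eta) t}\|g_0\|_\phi$, so choosing $\eta$ so small that $C C' \eta \leq \Lambda/2$ delivers the claimed rate $\Lambda/2$.

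The main obstacle, in my view, is establishing a \emph{uniform} spectral gap for the family of linear operators $\{\mc L_{M_\rho}\}_\rho$ as $\rho$ ranges over the ball $\{\|\rho\|_\phi \leq \bar C\}$. The constants $\gamma, \beta$ in \eqref{FL:runtumb1} and the minorisation constants hidden in Harris's theorem depend delicately on $\|\nabla_x M\|_\infty$, on the far-field lower bound $\tilde C$ from \textbf{A3}, and on the moment exponent $k$ in \textbf{A4} of Theorem \ref{thm:RT}; one has to show that small $W^{1,\infty}$-perturbations of $M_\infty$ of the form $\eta N*\rho$ preserve all of these bounds with constants uniform in $\rho$. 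This uniformity underwrites both the contraction estimate for $\Gamma$ in the fixed point step and the Gronwall closure in the convergence step, and is the source of the smallness thresholds on $\eta$ and $\bar C$ appearing in the statement.
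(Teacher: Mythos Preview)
Your proposal is correct and follows the same two-step strategy as the paper: a Banach fixed point for the steady state, followed by a perturbative Duhamel/Gronwall argument for convergence that exploits the linear decay from Theorem \ref{thm:RT}. The one notable difference is the choice of fixed point variable. The paper sets up the contraction map on the signal $M$ in $W^{1,\infty}$, defining $\mc G(M) = \log\bigl(S_\infty(1+\eta N*\rho^M)\bigr)$ where $\rho^M$ is the spatial density of the linear steady state with frozen signal $M$, and proves $\|\mc G(M_1)-\mc G(M_2)\|_{W^{1,\infty}} \leq \chi \|M_1-M_2\|_{W^{1,\infty}}$ directly; you instead iterate on $\rho$ in the weighted $L^1$-norm via $\Gamma$. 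The paper's choice is somewhat more economical because the $\eta$-smallness and the smoothing effect of $N*\cdot$ are immediately visible in the expression for $\mc G$, and one does not need to differentiate the stationary equation or invert the linearised operator; your route front-loads the uniform spectral gap into the fixed point step. Both approaches require, and you correctly single out as the main obstacle, the uniformity of the Harris constants over small $W^{1,\infty}$-perturbations of $M_\infty$.
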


\begin{proof}
	[Idea of the proof of Theorem \ref{thm:weak_NL}] 
	First, we carry out a fixed point argument to show that there exists a unique stationary state for the nonlinear equation. More precisely, we define a function $\mc G (M): C^2 (\R) \to C^2 (\R)$ given by 
	\begin{align*}
		\mc G (M) =\log (S_\infty (1 + \eta N * \rho^M)),
	\end{align*} where $S_\infty, \eta, N$ are the same as in \eqref{weak_nonlin} and $\rho^M (x) = \int_{\mc V} f_\infty (x,v) \d v$ where $f_\infty$ is the equilibrium state for the linear run and tumble equation with the choice $M$ in $\mc C$. This means that if $M$ is a fixed point of $\mc G$, then $f_\infty$ will be a steady state of the nonlinear equation. We can show that $\mc G$ has a unique fixed point by using the contraction mapping theorem, i.e. we obtain an inequality for $M_1, M_2$ such that $M_1 \neq M_2$,
	\begin{align*}
		\|\mc G (M_1) - \mc G (M_2)\|_{W^{1, \infty}} \leq 	\chi \|M_1 - M_2 \|_{W^{1, \infty}}
	\end{align*} where $\chi \in (0,1)$ is a constant and $W^{1, \infty}$ is the Sobolev space with $n=1$ and $p = \infty$ (see Section \ref{sec:Harris} for the definition).
	Next, we use the fact that the assumptions made on $N$ and $\eta$ in \eqref{weak_nonlin} enable us to consider the nonlinear equation as a perturbation of the linear one. The final convergence estimate is then obtained by exploiting these two arguments and using some of the estimates coming from the proof of Theorem \ref{thm:RT}.
\end{proof}

\subsubsection{The kinetic FitzHugh-Nagumo equation} \label{sec:FitzHugh-Nagumo}

The kinetic FitzHugh-Nagumo equation is a nonlocal PDE describing the dynamics of interacting neurons structured with two variables: the membrane potential (or voltage) $v$ and the adaptation (or recovery) variable $x$. The model is proposed in \cite{NAY62,FH61} as a simplified version of the celebrated Hodgkin-Huxley model \cite{HH52}. The latter is proposed by Alan Hodgkin and Andrew Huxley in 1952 and led them to receive a ``Nobel Prize in Physiology or Medicine'' in 1963. The model gives a precise mathematical description of how the spikes in neuron cells are generated and propagated in time.

An important property of the kinetic FitzHugh-Nagumo equation is that the dynamics may exhibit non-equilibrium stationary states. The equation may have periodic solutions or converge towards  a unique stationary state depending on the strength of the connectivity of the neural network. We are interested in the latter case which takes place in a weak connectivity regime. 

Moreover, the kinetic FitzHugh-Nagumo equation is known to be hypoelliptic, see e.g. \cite{MQT16}. This can be seen from the internal dynamics. The noise generated in the system acts as a random input only for the voltage variable $v$ and does not change the adaptation variable $x$. 

The evolution of the probability density of finding neurons at time $t \geq 0$ with an adaptation variable $x$ and a voltage variable $v$ such that $(x,v) \in \O \times \mc V = \R \times \R$ is given by
\begin{align} \label{eq:FH}
	\p_t f =  \mc L [f] := \p_x  (A(x,v)f ) + \p_v  (B (x,v) f) + \p_{vv} f,
\end{align} where 
\begin{align} \label{potentials}
	A (x,v) = a x - b v, \quad \mbox{and} \quad B(x,v) = x + v (v-1) (v-c),
\end{align} with some positive constants $a,b,c$. 

In \cite{MQT16}, the authors studied the existence and uniqueness of solutions to this equation. Moreover, they showed the existence of a nontrivial stationary state and non-quantitative exponential convergence towards the stationary state by using a generalised version of the Kre\u{\i}n-Rutman theorem. 
In \cite{C21}, the author improved this result and provided a quantitative exponential convergence to equilibrium by using Harris's theorem. The main result of \cite{C21} is given by

\begin{theorem} \label{thm:FitzHugh}
	Suppose that $t \mapsto f_t$ is the solution of the kinetic FitzHugh-Nagumo equation \eqref{eq:FH}-\eqref{potentials} with the initial data $f_0 \in \mc P_\phi (\R \times \R)$. Then the kinetic FitzHugh-Nagumo equation admits a unique stationary state $f_\infty$ and there exist positive constants $C, \lambda$ which are independent from $f_0$, such that, for all $t \geq 0$ and
	\begin{align} \label{eq:FNconv}
		\|f_t - f_\infty \|_{\phi} \leq C e^{-\lambda t} \|f_0 - f_\infty\|_{\phi},
	\end{align} with the weight function 
	\begin{align*}
		\phi (x, v) = e^{\chi (|x|^2 + |v^2|)}.
	\end{align*} where $\chi>0$ is a positive constant.
\end{theorem}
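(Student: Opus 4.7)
The plan is to apply Harris's theorem (Theorem \ref{thm:harris}) with the weight $\phi(x,v)=e^{\chi(x^2+v^2)}$ for a sufficiently small parameter $\chi>0$; this will simultaneously yield the existence of a unique invariant probability measure $f_\infty$ and the exponential contraction \eqref{eq:FNconv} in $\mc M_\phi(\R\times\R)$.

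First I would verify the Foster--Lyapunov condition in its infinitesimal form \eqref{eq:FL3}. The formal adjoint acts on smooth functions as $\mc L^*\psi = -A(x,v)\,\p_x\psi - B(x,v)\,\p_v\psi + \p_{vv}\psi$, so a direct computation, using $v^2(v-1)(v-c)=v^4-(1+c)v^3+cv^2$, gives
\begin{align*}
\mc L^*\phi = \bigl[-2\chi a x^2 + 2\chi(b-1)xv - 2\chi v^4 + 2\chi(1+c)v^3 - 2\chi c v^2 + 2\chi + 4\chi^2 v^2\bigr]\phi.
\end{align*}
The dominant negative terms $-2\chi a x^2$ and $-2\chi v^4$ absorb, via Young's inequality, the cross term $2\chi(b-1)xv$ and the lower-order $v^3,v^2$ contributions, provided $\chi$ is small enough that the diffusive term $4\chi^2 v^2$ remains subdominant. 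One thus finds positive constants $\zeta, D$ such that $\mc L^*\phi \leq -\zeta(1+x^2+v^4)\phi + D$, which in particular implies $\mc L^*\phi\leq -\zeta\phi+D$, and hence \eqref{Foster_Lyap} after integration in time as in the first remark of Section \ref{sec:Harris}.

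Next I would establish the minorisation condition on the sublevel set $\mc A=\{\phi\leq R\}$, which is a compact subset of $\R^2$. This is the main obstacle, since the diffusion acts only on the velocity variable and regularisation in $x$ must be recovered through the $x$--$v$ coupling in the drift. Writing $\mc L^* = X_0 + X_1^2$ with $X_0=-A\,\p_x-B\,\p_v$ and $X_1=\p_v$, a short computation yields
\begin{align*}
[X_1,X_0] = b\,\p_x - \bigl(3v^2 - 2(1+c)v + c\bigr)\p_v,
\end{align*}
so that $\{X_1,[X_1,X_0]\}$ spans $\R^2$ uniformly in $(x,v)$ because $b>0$. H\"ormander's theorem therefore ensures that the associated semigroup admits a smooth transition density $p_t(z,z')$ for every $t>0$, and the Stroock--Varadhan support theorem, combined with the obvious controllability of the control system associated to $(X_0,X_1)$, yields $p_t>0$ on $\R^2\times\R^2$. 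Concretely, a quantitative lower bound uniform on $\mc A$ is obtained by iterating Duhamel's formula in the spirit of the construction used for the kinetic Fokker--Planck equation in Section \ref{sec:FP}: a short diffusive step produces a Gaussian lower bound in $v$ around any starting point $z_0\in\mc A$, the drift then transports this mass along the characteristics of \eqref{potentials} to spread it in $x$, and one further iteration suffices to reach an open ball in both variables. Continuity and compactness of $\mc A$ then deliver $\tau>0$, $\alpha\in(0,1)$ and a probability measure $\eta$ such that \eqref{Minorisation} holds for every $\mu\in\mc P(\mc A)$.

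The hardest step is this quantitative minorisation: while hypoellipticity and the support theorem give only qualitative positivity of $p_\tau$, turning this into an explicit uniform lower bound on $\mc A$ requires careful tracking of the deterministic characteristics of the drift, together with the Gaussian lower bounds generated by the $v$--diffusion, in close analogy with the argument sketched for the kinetic Fokker--Planck equation. Once both hypotheses of Theorem \ref{thm:harris} have been verified with explicit constants, the conclusion \eqref{eq:FNconv} follows directly, uniqueness of $f_\infty$ appearing as a by-product of the theorem.
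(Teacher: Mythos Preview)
Your proposal is correct and follows the same overall strategy as the paper: verify the Foster--Lyapunov condition for the exponential weight $e^{\chi(|x|^2+|v|^2)}$ by a direct computation (the paper does this after a rescaling $v'=bv$, but the content is identical), then establish the minorisation on compact sublevel sets and apply Theorem~\ref{thm:harris}.

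The difference lies in how the minorisation is obtained. You invoke H\"ormander's bracket condition together with the Stroock--Varadhan support theorem to get qualitative positivity of the transition density, and then propose a Duhamel-type iteration to make it quantitative. The paper (following \cite{C21}) instead proceeds purely by PDE arguments: a regularisation estimate gives local continuity of the solution, so a pointwise lower bound extends to a small ball, and then a \emph{spreading of positivity} lemma of Harnack type---if $f\geq\varepsilon$ on $[0,t)\times B(z_0,R)$ then $f\geq C\varepsilon$ on $[t/2,t]\times B(z_0,\alpha R)$ with $\alpha>1$---is iterated to cover the whole compact set $\mc A$. Your route is more probabilistic in flavour and leans on classical hypoellipticity theory, which makes the positivity conceptually transparent but, as you acknowledge, leaves the quantitative step somewhat implicit; the paper's spreading-of-positivity approach is more self-contained and directly quantitative, at the price of proving the local regularity and the propagation lemma by hand.
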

\begin{remark}
	We note that in \cite{C21}, the author proves more general version of \eqref{eq:FNconv} in the norm $\|\cdot \|_{L_\phi^q}$ when $f_0 \in  L_\phi^p (\R\times \R)$, $p \in [1, +\infty)$ and $q \in [1,p]$. We state an adapted version of the result only in the weighted $L^1$ space with the weight function $\phi$ (corresponds to Section 5 in \cite{C21}) by taking a probability measure initial data. 
\end{remark}
\begin{proof}[Idea of the proof of Theorem \ref{thm:FitzHugh}] The proof uses Harris's theorem. 
	The \ref{Minorisation} relies on proving that $f$ is bounded from below by a positive constant at an initial arbitrary point $z_0 = (x_0,v_0)$ and extending this positive lower bound to a small ball around this point. This part is shown via ensuring the local continuity of $f$ by a regularisation argument which can be found in Section 3 of \cite{C21}. Then the \ref{Minorisation} is an immediate result of the \emph{spreading of positivity lemma} which says 
	\begin{align*}
		\mbox{If $f \geq \ve>0$ in $[0,t) \times B(z_0, R)$, then $f \geq C \ve$ in $[t/2, t] \times B(z_0, \alpha R)$ for some $\alpha >1$ and $C>0$.}
	\end{align*}
	For the \ref{Foster_Lyap} we follow a similar strategy as the previous sections. We apply a change of variable $v':= bv$ and re-write $\mc L$ in \eqref{eq:FH} as
	\begin{align*}
		\mc L  [f] = \p_x (A(z)f ) + \p_v (B (z) f) + \frac{1}{b^2} \p_{vv} f,
	\end{align*} where the formal adjoint of $\mc L$ is given by
	\begin{align*}
		\mc L^* \phi = - A(z) \p_x \phi - B(z) \p_v \phi  + \frac{1}{b^2} \p_{vv} \phi. 
	\end{align*} Then  for $\varphi (x,v) = e^{\frac{\gamma}{2} (|x|^2 + |v|^2)}$ 
	we obtain (assuming that $\gamma^2 > bc$),
	\begin{align*}
		\frac{	\mc L^* \varphi}{\varphi}  = - a \gamma|x|^2 - \frac{1}{b^3} |v|^4 + \frac{c+1}{b^2} v^3 + \frac{\gamma^2 -bc}{b^2} |v|^2 + \frac{\gamma}{b^2}
		\leq - a \gamma|x|^2 - \frac{1}{b^3} |v|^4 + C_1 v^3 + C_2 |v|^2 + C_3,
	\end{align*} where $C_1, C_2, C_3$ are some positive constants. This yields
	\begin{align*}
		\mc L^* \phi \leq - \zeta \phi + D,
	\end{align*} for some $\zeta, D >0$. 
	Thus the result follows from Theorem \ref{thm:harris}.
\end{proof}

\section{Discussion and perspectives}

\label{sec:Discussion}

In this review, we presented Harris-type theorems and how to use them in the PDE context to obtain results on trend to equilibrium for kinetic equations arising in physics and biology. The techniques are well-adapted for equations exhibiting non-equilibrium, non-explicit stationary states, initial data with bad local regularity (e.g. Dirac measure) and slowly decaying tails and lastly to obtain quantifiable convergence rates. These methods provide an alternative to classical hypocoercivity techniques, general relative entropy methods, and non-quantitative methods like the Kre\u{\i}n-Rutman theorem. They allow us to obtain spectral gap results in $L^1$ (or total variation) setting under relaxed assumptions. 

On the other hand, we point out that these methods apply only to linear kinetic equations and the quantitative rates are often not optimal. 

However, we believe that they provide promising results in the cases including but not limited to
\begin{itemize}
	\item \textbf{Absence of spectral gap.} The results on the long-time behaviour of kinetic equations in the cases where convergence is sub-geometric are rare compared to the cases where there is a spectral gap. One reason is that these cases are often more difficult to study mathematically as there is no general methodology to tackle them with. However, Harris-type theorems provide an insightful method to obtain quantifiable subgeometric rates of convergence which are optimal in some cases (see e.g. Sec. \ref{sec:Knudsen}). Especially, the application of the theorems is straightforward once the hypotheses are proven.
	\item \textbf{Nonlinearities.} Harris-type theorems do not apply directly to nonlinear equations. However, they can be used both for proving the existence of stationary states to nonlinear equations, as well as for the perturbative analysis of the nonlinear problems close to equilibrium (see e.g. Sec. \ref{sec:linearBGK} and Sec. \ref{sec:runtumble}). 
\end{itemize}

\section*{Acknowledgements} 
The author would like to thank the anonymous reviewer whose careful reading and comments significantly improved the presentation of the paper. This work was partially supported by the Vienna Science and Technology Fund (WWTF) with a Vienna Research Groups for Young Investigators project, grant VRG17-014. The author would like to thank the Isaac Newton Institute for Mathematical Sciences for support and hospitality during the programme ``Frontiers in kinetic theory: connecting microscopic to macroscopic scales - KineCon 2022'' when work on this paper was undertaken. This work was supported by EPSRC Grant Number EP/R014604/1.

%

\bibliography{Harris}
\end{document}